\newtheorem{theorem}{Theorem}[section]
\newtheorem{lemma}[theorem]{Lemma}
\newtheorem{corollary}[theorem]{Corollary}
\theoremstyle{definition}
\newtheorem{definition}[theorem]{Definition}
\newtheorem{question}[theorem]{Question}
\theoremstyle{remark}
\numberwithin{equation}{section}
\def\F{\mathcal{F}}
\def\I{\mathcal{I}}
\def\G{\mathcal{G}}
\def\C{\mathcal{C}}
\def\w{\omega}
\def\A{\mathcal{A}}
\def\B{\mathcal{B}}
\def\I{\mathcal{I}}
\def\U{\mathcal{U}}
\def\V{\mathcal{V}}
\def\vac{\emptyset}
\def\c{\mathfrak{c}}
\def\P{\mathcal{P}}
\def\Q{\mathcal{Q}}
\begin{document}

\title[Ordering Fr\'echet filters]{Comparing Fr\'echet-Urysohn filters with two pre-orders}

%    Information for first author
\author{S. Garcia-Ferreira}
%    Address of record for the research reported here
\address{Centro de Ciencias Matem\'aticas,   Universidad Nacional
Aut\'onoma de M\'exico, Campus Morelia, Apartado Postal 61-3, Santa Maria,
58089, Morelia, Michoac\'an, M\'exico}
%    Current address
%\curraddr{Department of Mathematics and Statistics,
%Case Western Reserve University, Cleveland, Ohio 43403}
\email{sgarcia@matmor.unam.mx}
%    \thanks will become a 1st page footnote.
\author{J. E. Rivera-G\'omez}
%    Address of record for the research reported here
\address{Centro de Ciencias Matem\'aticas,   Universidad Nacional
Aut\'onoma de M\'exico, Campus Morelia, Apartado Postal 61-3, Santa Maria,
58089, Morelia, Michoac\'an, M\'exico}
%    Current address
%\curraddr{Department of Mathematics and Statistics,
%Case Western Reserve University, Cleveland, Ohio 43403}
\email{jonathan@matmor.unam.mx}

\thanks{Research of the first-named  author was supported
by  CONACYT grant no. 176202 and PAPIIT grant no. IN-101911}

%    General info
\subjclass[2000]{Primary 54A20, 54D55: secondary 54D80, 54G20}

\date{}

\dedicatory{}

\keywords{Fr\'echet-Urysohn filter, $FAN$-filter, Arens space,  almost disjoint family, maximal almost disjoint family, Todor\v{c}evi\'c-Uzc\'ategui pre-order, Rudin-Keisler pre-order}

\begin{abstract}  A filter $\F$ on $\w$ is called Fr\'echet-Urysohn   if the space with only one non-isolated point $\w \cup \{\F\}$ is a Fr\'echet-Urysohn space, where the neighborhoods of the non-isolated point are determined by the elements of $\F$. In this paper, we distinguish
some Fr\'echet-Urysohn filters by using two pre-orderings of filters: One is  the Rudin-Keisler pre-order and the other one was introduced by Todor\v{c}evi\'c-Uzc\'ategui in \cite{tu05}.
In this paper, we construct  an $RK$-chain of size  $\c^+$ which is $RK$-above of avery $FU$-filter. Also, we show that there is an infinite $RK$-antichain of $FU$-filters.
\end{abstract}

\maketitle

%%%%%%%%%%%%%%%%%%%%%%%%%%%%%%%%%%%%%%%%%%%%%%%%%%%%%%%%%%%%%%%%%%%%%%%%%%%%%%%%%%
\section{Notation and preliminaries}
%%%%%%%%%%%%%%%%%%%%%%%%%%%%%%%%

All filters will be taken on $\w$ and  be free.  For an
infinite set $X$, we let $[X]^{< \w}=\{A\subseteq X:|A|< \w\}$ and  $[X]^{\w}=\{A\subseteq X:|A|=\w\}$. For $A, B\in [\w]^{\w}$,
$A\subseteq^*B$ means that $A \setminus B$ is finite.  If
 $S \in [\w]^\omega$,  we say that $S \to \F$ if $S \subseteq^* F$ for every $F \in \F$.
 If $\F$ is a filter, then $C(\F) = \{ S \in [\w]^\w : S \to \F \}$ is the set of all sequences converging to $\F$.
For a filter $\F$, we let $\I_\F = \{\w
\setminus F:F\in \F\}$ ({\it the dual ideal}) and for an ideal $\I$,
we let $\F_\I = \{\w \setminus I : I\in \I\}$ ({\it the  dual
filter}).  If $\F$ is a filter and
$f:\w \to \w$ is a function, then we  define the filter $f[\F]=\{ F  :f^{-1}(F)\in \F\}$.
 For $A \in [\w]^\w$, we define $\F_r(A) = \{ B \subseteq A : |A
\setminus B| < \w\}$. In particular,
$\F_r(\w) := \F_r$ is the  {\it Fr\'echet filter}.
 We say that an infinite  family $\A \subseteq [\w]^{\w}$ is {\it almost disjoint} ($AD$-family) if $A\cap B$ is finite for distinct $A, B \in \A$.
The ideal generated by an $AD$-family $\A$ is
$\I(\A)=\{ X \subseteq \w: \exists \A'\in[\A]^{< \w}(X \subseteq^*\bigcup \A')\}$.
An $AD$-family $\A$ is called
{\it maximal almost disjoint}  ($MAD$-{\it family}) if it is not contained
properly in another $AD$-family.   More general, if $\mathcal{B}
\subseteq [\w]^\w$, then we say that a family $\mathcal{A}$ is {\it
maximal} in $\mathcal{B}$ if $\mathcal{A} \subseteq \mathcal{B}$ and
for every $B \in \mathcal{B}$ there is $A \in \mathcal{A}$ such that
$|A \cap B| = \w$. For a nonempty $\A\subseteq [\w]^{\w}$, we define
$\A^{\bot}=\{B \in [\w]^{\w} : \forall A\in \A(|A\cap B|<\w)\}$,
$\A^+ = \{B \in[\w]^{\w}:\forall A\in \A(|A\cap B| \neq
\emptyset)\}$ and $\A^* = \{B \in[\w]^{\w}: |\{ A \in \A : |A \cap
B| = \w \}| \geq \w \}$. Notice that, for a filter $\F$, we have that  $\F^+ = \P(\w) \setminus \I_\F$ and,
 for an arbitrary ideal $\I$,  $\I^\perp$ is always an ideal and $\I
\subseteq \I^{\perp \perp}$.
For each $X \in[\w]^{\w}$ and  $\A \subseteq [\w]^{\w}$ we let
$\A|_{X}=\{ A \cap X : A \in \A \ \text{and} \ |A \cap X|=\w\}$.
Observe that if $\A$ is an $AD$-family and
$B \in \A^*$, then $\A|_B = \{ A \cap B : A \in \A \ \text{and} \ |A
\cap B| = \omega \}$ is an $AD$-family on $B$.

\medskip

 Now, let $\F$ be a filter on $\w$ and consider the space $\xi(\F) = \w\cup \{F\}$ whose topology is defined  as follows:
All elements of $\w$ are isolated and the neighborhoods of $\F$ are  of the form $\{\F\} \cup F$ where $F \in \F$.
One class of spaces which has been extensively studied in Topology is the following:

\smallskip

A space $X$ is called a {\it Fr\'echet-Urysohn} space (for short $FU$-space) if for each $x\in X$ such that $x\in cl_X A$, there is a sequence in $A$ converging to $x$.

\smallskip

\begin{definition} A filter $\F$ is called a $FU$-{\it filter}  if the space $\xi(\F)$ is Fr\'echet-Urysohn.
\end{definition}

The ``smallest'' $FU$-filter is the Fr\'echet filter $\F_r$ and the countable $FAN$-filter is also an example of a $FU$-filter which does not have a countable base. By using $AD$-families, in the paper \cite{gu09}, the authors pointed out the existence of  $2^\frak{c}$  pairwise non-equivalent $FU$-filters. In other terms, we have  that $\F$ is a $FU$-filter iff for every $A \in \F^+$ there is $S \in [A]^\w$ such that $S \subseteq^* F$ for all $F \in \F$.

\medskip

Two notions that will help us to distinguish $FU$-filters are the following.

\begin{definition}\label{tu} Let $\F$ and $\G$ be two filters on $\w$.
\begin{enumerate}
\item $\F\leq_{RK}\G$ if there is a function $f: \w \to \w$ such that $f[\G] = \F$ (i. e.,
$F\in \F$ iff $f^{-1}(F)\in \G$).

\item ({\bf \cite{tu05} })  $\F\leq_{TU}\G$ if there are $A\in \G^+$, $B\in \F$ and
 a bijection $f:A\to B$ such that $f[\G|_{B}]=\F|_A$.
\end{enumerate}
\end{definition}

We assert that these two relations $\leq_{RK}$ and $\leq_{TU}$ are reflexive and transitive but they are
not anti\-sy\-mmetric.

\begin{definition}
Let $\F$ and $\G$ be filters on $\w$.
\begin{enumerate}
\item  $\F\approx\G$ if there is a bijection function $f: \w \to \w$ such that
$f[\F]= \G$.

\item $\F\approx_{RK}\G$ if  $\F\leq_{RK}\G$ and $\G\leq_{RK}\F$.
\end{enumerate}
\end{definition}

The definition introduce in $(1)$ can be generalized as:
If $A,B\in[\w]^{\w}$ and $\F, \G$ are filters on $A$ and $B$,
respectively, then $\F$ and $\G$ are called {\it equivalent}
if there is a bijection $f: A \to B$ such that $f[\F] =
\G$. It is evident that $\F \approx\G$ implies that $\F\approx_{RK}\G$ for every pair of filters $\F$ and $\G$.
However, we do not know if the inverse implication holds for the class of $FU$-filters:

\begin{question}
Are there two $FU$-filters $\F$ and $\G$ such that $\F\approx_{RK}\G$ and  $\F \not\approx\G$ ?
\end{question}

By the symbol $\F <_{RK}\G$ we shall understand that $\F\leq_{RK}\G$ and  $\F \not\approx\G$.

\medskip

This paper is a continuation of the work done in the article \cite{gr01}.
The second section is devoted to recall some basic properties of the $FU$-filters. We give combinatorial properties which are equivalent to the
$RK$-order and  the $TU$-order. These equivalences will allow to construct $FU$-filters with some interesting properties. In the third section,
we show that if $\F\leq \F_{\P}$, then either $\F$ is  relatively equivalent to the Fr\'echet
filter or equivalent to $\F_{\P}$. The pre-orders $\leq_{RK}$ and $\leq_{TU}$ are compared in the forth section. We show that $\leq_{TU} \subseteq \leq_{RK}$ in the category of $FU$-filters.
We also prove that
if $\A$ is a $NMAD$-family of size $\c$ which is completely separable, then
$S_{\P}\nleq_{TU}S_{\A}$ and $S_{\P} \leq_{RK} S_{\A}$. In the fifth section,  we construct  an $RK$-chain of size  $\c^+$ which is $RK$-above of each $FU$-filter.
The sixth section is devoted to study the $RK$-incomparability of $FU$-filters. We use the $\alpha_i$ properties to show the $RK$- incomparability of  certain $FU$-filters. Besides,  we construct an infinite $RK$-antichain of $FU$-filters.

%%%%%%%%%%%%%%%%%%%%%%%%%%%%%%%%%%%%%%%%%%%%%%%%%%%%%%%%%%%%%%%%%%%%%%%%%%%%%%%%%%
\section{Fr\'echet-Urysohn Filters}
%%%%%%%%%%%%%%%%%%%%%%%%%%%%%%%%

In order to study the $FU$-filters and their relationships we list some useful facts.

\medskip

Let $\F$ be a filter on $\w$.
\begin{enumerate}
\item If $A\in[\w]^{\w}$, then  $\F\in cl_{\xi(\F)}A$ iff $A\in \F^+$.
\item $S \to \F$ iff $S \in \I_\F^{\perp}$.
\item $\F$ is a $FU$-filter iff $\I_\F^{\perp \perp} = \I_\F$.
\end{enumerate}
 It is not hard to prove that if $\emptyset \neq \mathcal{D}\subseteq [\w]^{\w}$, then
$$
\F_{\mathcal{D}}=\{F\subseteq \w:\forall D\in \mathcal{D}(D\subseteq^*F)\}
$$
is a $FU$-filter. By using this kind of filters and $AD$-families it is possible to characterize the  $FU$-filters as follows:

\begin{lemma}\label{simon}{\bf \cite{si98}} A filter $\F$ is a $FU$-filter iff there is an $AD$-family $\A$ maximal in $I^\perp_{\F}$ such that $\F=\F_{\A}$.
\end{lemma}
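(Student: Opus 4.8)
The plan is to prove both directions of the equivalence using the characterization of $FU$-filters in terms of the second orthogonal, namely that $\F$ is a $FU$-filter iff $\I_\F^{\perp\perp} = \I_\F$, together with the observation that $S \to \F$ iff $S \in \I_\F^\perp$, so that $C(\F) = \I_\F^\perp$. First I would handle the easier direction: suppose there is an $AD$-family $\A$ maximal in $\I_\F^\perp$ with $\F = \F_\A$. By definition of $\F_\A$, the dual ideal is $\I_\F = \{X \subseteq \w : \forall A \in \A\, (A \subseteq^* \w\setminus X)\} = \{X : \forall A\in\A\,(|A\cap X|<\w)\}$, which is exactly $\A^\perp$. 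So it suffices to show $\A^{\perp\perp} = \A^\perp$; equivalently, since $\I\subseteq\I^{\perp\perp}$ always holds (stated in the preliminaries), I need $\A^{\perp\perp}\subseteq\A^\perp$. Take $Y \in \A^{\perp\perp}$ and suppose $Y\notin\A^\perp$, i.e. $Y\in[\w]^\w$ meets some member of $\A$ infinitely, so $Y\in (\A^\perp)^+ = (\I_\F)^+ = \F^+$. By maximality of $\A$ in $\I_\F^\perp = \A^\perp$... wait, here I must be careful: maximality of $\A$ in $\A^\perp$ would be vacuous since $\A\cap\A^\perp=\emptyset$; rather $\A$ is maximal in $\I_\F^\perp$ in the sense of the "maximal in $\mathcal B$" definition from the preliminaries, meaning $\A\subseteq\I_\F^\perp$ and every $B\in\I_\F^\perp$ meets some $A\in\A$ infinitely. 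So if $Y\in\A^{\perp\perp}$, then $Y\in\I_\F^{\perp\perp}$... but that is circular. Instead: $Y\in\A^{\perp\perp}$ means $Y$ has finite intersection with every member of $\A^\perp$; since $\A\subseteq\A^\perp$... no. Let me reorganize: $\A^{\perp\perp}$ consists of sets almost disjoint from everything almost disjoint from all of $\A$. I claim $Y\in\A^{\perp\perp}\Rightarrow Y\in\A^\perp$: if not, $|Y\cap A| = \w$ for some $A\in\A$, but then $A\setminus Y$, or rather any infinite subset of $Y\cap A$ which is almost disjoint from all of $\A$... hmm, that need not exist. The cleanest route: $\A^{\perp\perp}\subseteq \A^\perp$ fails in general for $AD$-families, so I should argue via $\I_\F$ directly. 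Since $\I_\F = \A^\perp$ is an ideal and $\I_\F^\perp = \A^{\perp\perp}$, and $\A\subseteq\A^{\perp\perp}$ (as each $A\in\A$ is almost disjoint from each member of $\A^\perp$), maximality of $\A$ in $\I_\F^\perp = \A^{\perp\perp}$ gives: every $B\in\A^{\perp\perp}$ meets some $A\in\A$ infinitely, so $B\in\A^*\subseteq\A^{+}$, hence $B\notin\A^\perp = \I_\F$. But we also need $\I_\F\subseteq\I_\F^{\perp\perp}=\A^{\perp\perp}$, yet every member of $\A^{\perp\perp}$ fails to be in $\I_\F$ — contradiction unless $\I_\F^{\perp\perp}$ contains no member of $\I_\F$, forcing $\I_\F=\emptyset$, absurd. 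The resolution is that maximality should be read as forcing $\I_\F^{\perp\perp}\setminus\I_\F = \emptyset$: indeed take $B\in\I_\F^{\perp\perp}$; if $B\notin\I_\F=\A^\perp$ then $B$ meets some $A_0\in\A$ infinitely, and $A_0\cap B\in[\w]^\w$ lies in $\A^\perp$'s perp too, but $A_0\cap B\subseteq A_0\in\A\subseteq\I_\F^\perp{}^\perp$... I'll nail the bookkeeping in the writeup; the substantive point is that maximality of $\A$ in $\I_\F^\perp$ prevents any infinite set from simultaneously being $\perp$ to all of $\A$ and meeting members of $\A$, which collapses $\I_\F^{\perp\perp}$ onto $\I_\F$.

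For the converse, assume $\F$ is a $FU$-filter, so $\I_\F^{\perp\perp}=\I_\F$, and build $\A$. I would use a Zorn's lemma / transfinite argument to choose a maximal $AD$-family $\A\subseteq\I_\F^\perp = C(\F)$ subject to: $\A$ is an $AD$-family all of whose members are in $\I_\F^\perp$, and $\A$ is maximal among such in the "$\A$ is maximal in $\I_\F^\perp$" sense — i.e. every $B\in\I_\F^\perp$ has infinite intersection with some member of $\A$. Such a maximal $\A$ exists because $\I_\F^\perp$ is nonempty (the filter is free, so there are infinitely many almost-disjoint sets in $\I_\F^\perp$; more carefully, any infinite partition of $\w$ into finite-to-one pieces that are in the ideal's complement) — actually I should invoke that $\F$ being $FU$ means $C(\F)=\I_\F^\perp$ is "large", and apply Kuratowski–Zorn to the poset of $AD$-subfamilies of $\I_\F^\perp$ ordered by inclusion, whose maximal elements are exactly the $AD$-families maximal in $\I_\F^\perp$ (this last equivalence — that inclusion-maximal is the same as "maximal in $\I_\F^\perp$" — needs the small lemma that if some $B\in\I_\F^\perp$ were almost disjoint from all of $\A$, then $\A\cup\{B\}$ is still an $AD$-subfamily of $\I_\F^\perp$, contradicting inclusion-maximality). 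Then I must check $\F = \F_\A$. We have $\F_\A$ is the filter dual to $\A^\perp$. Now $\A\subseteq\I_\F^\perp$ gives $\I_\F\subseteq\I_\F^{\perp\perp}\subseteq\A^\perp$ — wait, $\A\subseteq\I_\F^\perp$ implies $\A^\perp\supseteq\I_\F^{\perp\perp} = \I_\F$, good, so $\F_\A\supseteq\F$ ... I want the reverse containment of ideals: $\A^\perp\subseteq\I_\F$. Take $X\in\A^\perp$, i.e. $X\in[\w]^{<\w}$ or $X$ is almost disjoint from every member of $\A$. If $X$ is infinite and $X\notin\I_\F$, then $X\in\I_\F^+$; but is $X\in\I_\F^\perp$? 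Not necessarily — here's where $FU$-ness enters: since $\I_\F^{\perp\perp}=\I_\F$ and $X\notin\I_\F$, $X\notin\I_\F^{\perp\perp}$, so there is $Z\in\I_\F^\perp$ with $|X\cap Z|=\w$, and then $X\cap Z\in\I_\F^\perp$ (subset of $Z$), so $X\cap Z$ meets some $A\in\A$ infinitely by maximality, giving $|X\cap A|=\w$, contradicting $X\in\A^\perp$. Hence $\A^\perp\subseteq\I_\F$, so $\A^\perp = \I_\F$ and $\F_\A = \F$.

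The main obstacle — and the only genuinely delicate point — is the interplay between the two notions of "maximal" (inclusion-maximal $AD$-family inside $\I_\F^\perp$ versus the "$\mathcal A$ maximal in $\mathcal B$" notion from the preliminaries) and pinning down exactly where $\I_\F^{\perp\perp}=\I_\F$ is used in each direction. The existence of a maximal such $\A$ via Zorn is routine once one checks chains have upper bounds (the union of an increasing chain of $AD$-families is an $AD$-family) and that $\I_\F^\perp$ contains at least one infinite $AD$-family (which holds since $\F$ is free: split $\w$ into infinitely many infinite pieces none of which is in $\I_\F$, refining if necessary so each is in $\I_\F^\perp$, using $FU$-ness to guarantee each $\F^+$-set contains a convergent subsequence). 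I would present the forward direction (existence of $\A$) first, then verify $\F=\F_\A$, then close with the short reverse direction; throughout I rely only on facts (1)–(3) listed before the lemma and the elementary identities $\I\subseteq\I^{\perp\perp}$, $(\A^\perp)^+ = \A^+$-type manipulations, and $C(\F)=\I_\F^\perp$.
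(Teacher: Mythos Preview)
The paper does not supply its own proof of this lemma; it is stated with a citation to Simon and then used. So there is nothing in the paper to compare your argument against directly. Here is an assessment of your proposal on its own terms.

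Your backward direction (given that $\F$ is a $FU$-filter, produce $\A$) is essentially correct: Zorn gives an inclusion-maximal $AD$-family $\A \subseteq \I_\F^\perp$, inclusion-maximality coincides with ``maximal in $\I_\F^\perp$'' in the paper's sense, and your verification that $\A^\perp = \I_\F$ using the $FU$-hypothesis $\I_\F^{\perp\perp} = \I_\F$ is clean once the hedging is stripped away.

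Your forward direction, however, never lands. You correctly compute $\I_\F = \A^\perp$, but then write ``it suffices to show $\A^{\perp\perp} = \A^\perp$,'' which is the wrong identity: what you actually need is $(\A^\perp)^{\perp\perp} = \A^\perp$, i.e.\ $\A^{\perp\perp\perp} = \A^\perp$. That identity is \emph{automatic} for any family $\A$ whatsoever (the Galois-connection triviality: $\A \subseteq \A^{\perp\perp}$ gives $\A^{\perp\perp\perp} \subseteq \A^\perp$, and $\A^\perp \subseteq (\A^\perp)^{\perp\perp}$ gives the reverse inclusion). So this direction is one line and does not use the maximality of $\A$ at all --- consistent with the remark just before the lemma that $\F_{\mathcal{D}}$ is a $FU$-filter for \emph{every} nonempty $\mathcal{D} \subseteq [\w]^{\w}$. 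Your circular attempts to invoke maximality, culminating in ``I'll nail the bookkeeping in the writeup,'' are symptoms of chasing the wrong equation; fix the exponent and the difficulty evaporates.
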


We can see directly from this characterization that  $\A$ is a $MAD$-family iff $\F_{\A} = \F_r$. More general,
if  $\F_{\A}$ is the filter generated by an $AD$-family $\A$ and $B\to \F_{\A}$, then there is $A\in \A$ such that $|A\cap B|=\w$.
Observe  from Lemma \ref{simon} that for every infinite $\mathcal{D}\subseteq [\w]^{\w}$, we can find an $AD$-family $\A$ such that
$\F_{\mathcal{D}} = \F_{\mathcal{A}}$. In what follows, when we write $\F_{\mathcal{A}}$ we shall always assume that $\A$ is an  $AD$-family.

\medskip

Given a $FU$-filter $\G$, we say that $\G$ is {\it relatively equivalent} to the Fr\'echet
filter iff  $A \to \G$ and $A \in \G$. In particular, we have that
$\G = \{ G \cup E : G \in \F_r(A) \ \text{and} \ E \subseteq \w \setminus A \}$. If
 we do not require that the function $f$ to be onto in the definition of the $RK$-order, then we would have that
 $\F_r(A) \leq_{RK} \F_r$ for every $A \in [\w]^{\w}$. For our convenience, in the definition of $RK$-order, we shall always require
that the function involved  be onto. This convenience  is based on  the next theorem which was proved in \cite{gr01}.

\begin{theorem}\label{new-RK} Let $\F$ and $\G$ filters such that
$\G \neq \F_r$ and $\F$ is not  relatively equivalent to the Fr\'echet
filter. If $\F \leq_{RK} \G$, then there is a surjective function $g: \w \to \w$ such that $g[\G] = \F$.
\end{theorem}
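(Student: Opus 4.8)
The plan is to start from an arbitrary function $f : \w \to \w$ witnessing $\F \leq_{RK} \G$ (so $F \in \F$ iff $f^{-1}(F) \in \G$) and to modify it into a surjection $g$ with $g[\G] = \F$. Let $R = f[\w] \subseteq \w$ be the range of $f$. The first observation is that $R \in \F$: since $f^{-1}(R) = \w \in \G$, we get $R \in \F$ by the defining property of $f$. Thus $\F$ ``lives on $R$'' in the sense that every element of $\F$ meets $R$ in a set that is again in $\F$ (indeed $\F|_R$, or more precisely $\{F \cap R : F \in \F\}$, is just $\F$ restricted to a large set). If $R$ is cofinite the problem is essentially trivial after a finite patch, so the interesting case is when $\w \setminus R$ is infinite. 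The idea is then to redefine $f$ on the fibre over a single point of $R$ so as to absorb all of $\w \setminus R$, producing a surjection.

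Concretely, fix $r_0 \in R$ and consider $A_0 = f^{-1}(\{r_0\}) \in \G^+$ (it is in $\G^+$ because $f^{-1}(R \setminus \{r_0\}) \notin \G$ would force... one checks $\{r_0\} \notin \I_\F$ using that $\F$ is free, hence $f^{-1}(\{r_0\}) \in \G^+$, and in fact we need $A_0$ to be large enough). Here is where the two hypotheses enter. The hypothesis that $\F$ is \emph{not} relatively equivalent to the Fr\'echet filter should be used to guarantee that we cannot have $\G$ concentrated on a thin fibre; and the hypothesis $\G \neq \F_r$ prevents $\G$ itself from being (relatively) Fr\'echet, which is exactly the degenerate situation where $f$ cannot be repaired (e.g. $f$ injective on a $\G$-large set mapping onto a $\F_r$-large set). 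The key step is to show that under these two hypotheses there is some point $r_0 \in R$ whose fibre $A_0 = f^{-1}(\{r_0\})$ is in $\G^+$, and moreover infinite, so that we may biject a co-infinite subset of $A_0$ onto $A_0 \setminus (\text{something})$ while using the rest of $A_0$ — together with the points of $\w \setminus R$ — to cover $\w$. More carefully: choose $g$ to agree with $f$ off $A_0$, and on $A_0$ let $g$ be any surjection $A_0 \to (\w \setminus R) \cup \{r_0\}$ that is finite-to-one and sends a $\G$-positive (even cofinite-in-$A_0$) subset to $\{r_0\}$. Then $g$ is onto, and one must re-verify $g[\G] = \F$.

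For the verification that $g[\G] = \F$: if $F \in \F$ then $f^{-1}(F) \in \G$; we have $g^{-1}(F) \supseteq f^{-1}(F) \setminus A_0$, which differs from $f^{-1}(F)$ by a subset of $A_0$, and $g^{-1}(F) \cap A_0$ contains the $\G$-positive part of $A_0$ mapped to $r_0$ whenever $r_0 \in F$; the case $r_0 \notin F$ needs $A_0 \notin \G$, which is why we must arrange $A_0 \in \G^+ \setminus \G$, i.e. $A_0$ is positive but not large — this is the delicate bookkeeping point. Conversely if $g^{-1}(F) \in \G$ one argues $f^{-1}(F)$ differs from it by a subset of $A_0$ and recovers $f^{-1}(F) \in \G$ by the same positivity/non-largeness of $A_0$, giving $F \in \F$.

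The main obstacle I anticipate is precisely the selection of the fibre $A_0$: one needs a point $r_0 \in R$ with $f^{-1}(\{r_0\}) \in \G^+$ and with $f^{-1}(\{r_0\})$ neither $\G$-small in a way that breaks surjectivity-patching nor $\G$-large in a way that breaks the filter equation — and the existence of such $r_0$ is exactly what the two hypotheses ($\G \neq \F_r$, $\F$ not relatively Fréchet) are there to supply. If no single good fibre exists, one falls back to distributing $\w \setminus R$ across countably many fibres (possible since $\w \setminus R$ is countable), which requires that $R$ contain infinitely many $r$ with $f^{-1}(\{r\}) \in \G^+$; ruling out the contrary case (only finitely many positive fibres inside a $\G$-large set) is where $\G \ne \F_r$ is used, via Lemma~\ref{simon} and the remark that $\A$ is $MAD$ iff $\F_\A = \F_r$. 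I would organize the proof around this dichotomy: either a single fat fibre works, or spread the defect over a countable antichain of positive fibres.
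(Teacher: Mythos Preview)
The paper does not prove this theorem; it is quoted from \cite{gr01}, so there is no in-paper argument to compare your attempt against. Evaluating your proposal on its own merits:

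Your plan of repairing $f$ on a single fibre $A_0 = f^{-1}(\{r_0\})$ is more delicate than necessary, and the justification you offer for the existence of a suitable $r_0$ is wrong. You write ``one checks $\{r_0\} \notin \I_\F$ \dots\ hence $f^{-1}(\{r_0\}) \in \G^+$'', but $\F$ is free, so every singleton lies in $\I_\F$; and there is no reason whatsoever for a fibre $f^{-1}(\{r_0\})$ to be $\G$-positive (take $f$ injective). The condition you eventually isolate, $A_0 \in \G^+ \setminus \G$, may simply fail for every $r_0$, and your fallback of spreading the defect over countably many positive fibres does not help when there are no positive fibres at all. So as written the argument has a genuine gap at exactly the point you flagged as ``the main obstacle''.

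A cleaner route avoids fibres entirely and uses only the hypothesis $\G \neq \F_r$. Choose an infinite $I$ with $\w \setminus I \in \G$ (possible precisely because $\G \neq \F_r$). Put $g = f$ on $\w \setminus I$, and let $g|_I$ be any surjection of $I$ onto $\w$. Then $g$ is onto, and for every $F \subseteq \w$ one has $g^{-1}(F) \cap (\w \setminus I) = f^{-1}(F) \cap (\w \setminus I)$; since $\w \setminus I \in \G$, this yields $g^{-1}(F) \in \G \iff f^{-1}(F) \in \G \iff F \in \F$, i.e.\ $g[\G] = \F$. Note that this argument never invokes the hypothesis that $\F$ is not relatively equivalent to the Fr\'echet filter; that assumption appears to be carried over from the formulation in \cite{gr01} rather than being strictly required for the conclusion as stated here.
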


In virtue of the previous theorem, we remark that if $\F\leq_{RK}\F_r$, then $\F=\F_r$.
It was pointed out in \cite{gr01} that every filter $\G$
which has a nontrivial convergent sequence satisfies that $\F_r\leq_{RK} \G$.

\medskip

Next, let us describe  another useful construction of $FU$-filters which has been very important in the construction of special $FU$-filters (see, for instance, the article \cite{si}):

\medskip

For every $AD$-family $\A$, we define  $S_{\A}=\F_{\I(\A)}$. In general, $S_{\A}$ is not a $FU$-filter, for instance if $\A$
is a $MAD$-family, then $S_{\A}$ does not have any nontrivial convergent sequence. In order that $S_{\A}$ be a $FU$-filter we need some special $AD$-families:

\medskip

 An $AD$-family $\A$  is said to be {\it nowhere $MAD$-family} ($NMAD$-family) if
for every $X \in \I(\A)^+$ there is $A \in \I(\A)^{\bot}\cap [X]^{\w}$.
We remark that if $\A$ is a $NMAD$-family, then   every infinite
subfamily $\B$  of $\A$ is also a $NMAD$-family.

\begin{theorem}\label{sfu} ({\bf \cite{si}}) Given an $AD$-family $\A$, we have that the filter $S_{\A}$ is a $FU$-filter iff $\A$ is a $NMAD$-family.
\end{theorem}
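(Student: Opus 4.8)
The plan is to observe that, once everything is translated through the characterizations already established, the statements ``$S_{\A}$ is a $FU$-filter'' and ``$\A$ is a $NMAD$-family'' literally coincide. First I would record the bookkeeping identities. Since $S_{\A}=\F_{\I(\A)}=\{\w\setminus I:I\in\I(\A)\}$, its dual ideal is $\I_{S_{\A}}=\I(\A)$; hence $S_{\A}^{+}=\P(\w)\setminus\I_{S_{\A}}=\I(\A)^{+}$, the family of $\I(\A)$-positive sets. Moreover, by fact (2) in the list above, for $S\in[\w]^{\w}$ we have $S\to S_{\A}$ iff $S\in\I_{S_{\A}}^{\perp}=\I(\A)^{\perp}$, i.e.\ iff $S$ is almost disjoint from every member of $\I(\A)$. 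With these identities in hand, the combinatorial characterization ``$\F$ is a $FU$-filter iff for every $A\in\F^{+}$ there is $S\in[A]^{\w}$ with $S\subseteq^{*}F$ for all $F\in\F$'' becomes, for $\F=S_{\A}$: for every $A\in\I(\A)^{+}$ there is $S\in\I(\A)^{\perp}\cap[A]^{\w}$, which is verbatim the definition of $NMAD$-family.

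Then the two directions are immediate. For $(\Leftarrow)$, assume $\A$ is $NMAD$ and fix $A\in S_{\A}^{+}$. Then $A\in\I(\A)^{+}$, so by $NMAD$ there is $S\in\I(\A)^{\perp}\cap[A]^{\w}$; to see $S\to S_{\A}$, take $F\in S_{\A}$, note $\w\setminus F\in\I(\A)$, hence $\abs{S\cap(\w\setminus F)}<\w$ because $S\in\I(\A)^{\perp}$, i.e.\ $S\subseteq^{*}F$. Thus $S$ witnesses the $FU$ characterization for $A$, and $S_{\A}$ is a $FU$-filter. For $(\Rightarrow)$, assume $S_{\A}$ is a $FU$-filter and fix $X\in\I(\A)^{+}=S_{\A}^{+}$; the $FU$ characterization yields $S\in[X]^{\w}$ with $S\subseteq^{*}F$ for all $F\in S_{\A}$, that is $S\to S_{\A}$, so $S\in\I(\A)^{\perp}\cap[X]^{\w}$ and $\A$ is $NMAD$.

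There is essentially no hard step here: the content is carried entirely by the already-proven facts --- Simon's characterization (Lemma~\ref{simon}), equivalently $\I_{\F}^{\perp\perp}=\I_{\F}$, equivalently the combinatorial form --- together with $\I_{S_{\A}}=\I(\A)$. The one point that deserves a line of care is the equivalence $S\to S_{\A}\Leftrightarrow S\in\I(\A)^{\perp}$, which is just fact (2) applied to $\I_{S_{\A}}=\I(\A)$. Alternatively, one can run the proof through fact (3) directly: since $\I(\A)\subseteq\I(\A)^{\perp\perp}$ always, one checks that $\I(\A)^{\perp\perp}\subseteq\I(\A)$ holds iff every $\I(\A)$-positive set has an infinite subset lying in the ideal $\I(\A)^{\perp}$ --- indeed, an $\I(\A)$-positive set with no infinite subset in $\I(\A)^{\perp}$ is itself almost disjoint from every member of $\I(\A)^{\perp}$ (any infinite intersection would belong to the ideal $\I(\A)^{\perp}$), hence lies in $\I(\A)^{\perp\perp}\setminus\I(\A)$ --- and this last condition is precisely $NMAD$; then $\I_{S_{\A}}=\I(\A)$ and fact (3) conclude.
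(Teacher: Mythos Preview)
Your proof is correct: once you identify $\I_{S_{\A}}=\I(\A)$, the combinatorial characterization of $FU$-filters stated just before Definition~\ref{tu} coincides verbatim with the definition of $NMAD$, and both directions fall out immediately. The paper itself does not supply a proof of this theorem---it is quoted from Simon~\cite{si}---so there is no in-paper argument to compare against; your write-up is a clean self-contained justification using only the facts already recorded in the preliminaries.
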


\medskip

If $\P=\{P_n:n<\w\}\subseteq [\w]^{\w}$ is an infinite partition of $\w$ in infinite subsets, by Theorem \ref{sfu}, then $\F_{\P}$ is an $FU$-filter which is known as the  $FAN$-filter. We also know that $S_{\P}$ and $\F_r$ are the only  $FU$-filters with countable base. In what follows, when we write $S_{\P}$ we shall understand that $\P$ is an infinite partition of $\w$  in infinite subsets. For the  filters of the form $S_{\A}$ is very easy to know their characters as we shall see next.

\begin{lemma}\label{char-nmad} If  $\A$ is an $AD$-family on $\w$, then $\chi(S_{\A})=|\A|$.
\end{lemma}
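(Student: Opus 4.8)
The plan is to show the two inequalities $\chi(S_{\A}) \le |\A|$ and $\chi(S_{\A}) \ge |\A|$ separately. Recall that $S_{\A} = \F_{\I(\A)}$, so $F \in S_{\A}$ iff $\w \setminus F \in \I(\A)$, i.e.\ iff there is a finite $\A' \in [\A]^{<\w}$ with $\w \setminus F \subseteq^* \bigcup \A'$. Equivalently, $F \in S_{\A}$ iff $F \supseteq^* \w \setminus \bigcup \A'$ for some finite $\A' \subseteq \A$.

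For the upper bound $\chi(S_{\A}) \le |\A|$, I would exhibit an explicit base of size $|\A|$ (assuming $\A$ is infinite, which is part of the standing definition of $AD$-family in this paper; the finite case is degenerate and excluded). Enumerate $\A = \{A_\alpha : \alpha < \kappa\}$ where $\kappa = |\A|$, and for each finite $s \in [\kappa]^{<\w}$ set $B_s = \w \setminus \bigcup_{\alpha \in s} A_\alpha$. The family $\{B_s : s \in [\kappa]^{<\w}\}$ is a base for $S_{\A}$: indeed every $B_s \in S_{\A}$ since its complement $\bigcup_{\alpha\in s}A_\alpha$ lies in $\I(\A)$, and conversely if $F \in S_{\A}$ then $\w\setminus F \subseteq^* \bigcup_{\alpha\in s}A_\alpha$ for some finite $s$, so $B_s \subseteq^* F$; shrinking $B_s$ by a finite set (which does not change membership in the filter, as the filter is free and closed under finite modifications above its members) gives an element of the base contained in $F$. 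Since $|[\kappa]^{<\w}| = \kappa$ for infinite $\kappa$, this base has size $\le |\A|$, giving $\chi(S_{\A}) \le |\A|$.

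For the lower bound $\chi(S_{\A}) \ge |\A|$, suppose $\mathcal{B}$ is a base for $S_{\A}$ with $|\mathcal{B}| < |\A|$; I want a contradiction. For each $\alpha < \kappa$, the set $\w \setminus A_\alpha$ belongs to $S_{\A}$, so there is some $B_\alpha \in \mathcal{B}$ with $B_\alpha \subseteq \w \setminus A_\alpha$, i.e.\ $B_\alpha \cap A_\alpha = \vac$. If $|\mathcal{B}| < |\A| = \kappa$, then by pigeonhole some single $B \in \mathcal{B}$ serves as $B_\alpha$ for a subfamily $\A'\subseteq \A$ of size $\kappa$ (in particular, for at least two indices suffices if $\kappa$ is finite, but we are in the infinite case); then $B \cap A = \vac$ for every $A \in \A'$, so $B \subseteq \w \setminus \bigcup \A'$. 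Now I claim $\w \setminus \bigcup \A' \notin S_{\A}$: its complement is $\bigcup \A'$, and $\bigcup \A' \in \I(\A)$ would require $\bigcup\A' \subseteq^* \bigcup \A''$ for some \emph{finite} $\A'' \subseteq \A$, which is impossible since the infinitely many members $A \in \A' \setminus \A''$ are infinite and almost disjoint from every member of $\A''$, hence not almost covered by $\bigcup\A''$. So $\w\setminus\bigcup\A' \notin S_{\A}$, yet it contains $B \in \mathcal{B} \subseteq S_{\A}$ — contradicting that $B$ is in the filter (a filter is closed under supersets, so every superset of $B$ is in $S_{\A}$). Hence $|\mathcal{B}| \ge |\A|$, and combining with the upper bound, $\chi(S_{\A}) = |\A|$.

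The only subtle point, and the step I would be most careful with, is the pigeonhole move in the lower bound: one must note that the map $\alpha \mapsto B_\alpha$ from $\kappa$ into $\mathcal{B}$ with $|\mathcal{B}| < \kappa$ has a fiber of full size $\kappa$ (true for any infinite cardinal $\kappa$ when the codomain has smaller cardinality, by regularity considerations or simply because a union of $<\kappa$ sets each of size $<\kappa$ has size $<\kappa$ when $\kappa$ is regular — but in fact we only need a fiber of size $\ge 1$ together with infinitely many witnesses, which one gets more cheaply: already infinitely many $\alpha$ hitting the same $B$ suffices, and that follows since $\kappa$ is infinite and $|\mathcal{B}|<\kappa$). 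I would phrase it so as to only claim an \emph{infinite} fiber, which is all the complement argument needs, thereby avoiding any appeal to regularity of $\kappa$.
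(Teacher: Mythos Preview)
The paper states this lemma without proof, treating it as a routine fact, so there is no author's argument to compare against. Your two-inequality approach is exactly the natural one and is correct.

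One small wording issue in the upper bound: the family $\{B_s : s \in [\kappa]^{<\w}\}$ with $B_s = \w \setminus \bigcup_{\alpha \in s} A_\alpha$ is not literally a base, since for $F \in S_\A$ you only obtain $B_s \subseteq^* F$, not $B_s \subseteq F$, and the ``shrunken'' set $B_s$ minus a finite piece is not itself one of the $B_t$'s. The clean fix is to take instead the family $\{\,B_s \setminus n : s \in [\kappa]^{<\w},\ n < \w\,\}$, which still has cardinality $|[\kappa]^{<\w}| \cdot \w = \kappa$ and is a genuine base. This is clearly what you intend, so it is only a matter of phrasing.

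Your lower-bound argument is fine. The pigeonhole step needs only an \emph{infinite} fiber of the map $\alpha \mapsto B_\alpha$, and that is guaranteed: if every fiber were finite then $\kappa \le |\mathcal{B}| \cdot \w$, which is strictly less than $\kappa$ whether $|\mathcal{B}|$ is finite or infinite (noting that a base of a free filter is necessarily infinite is not even needed here). Once you have an infinite $\A' \subseteq \A$ with $B$ disjoint from every member of $\A'$, the contradiction with upward closure of $S_\A$ is exactly as you describe.
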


The following combinatorial statements are equivalent  to the $RK$-order and were proved in \cite[Th. 3.4]{gr01}.

\begin{theorem}\label{theoeq} Let $\A$ and $\B$ be $AD$-families on $\w$.
The following statements are equivalent.
\begin{enumerate}
\item $\F_{\A}\leq_{RK}\F_{\B}$ via the function $f: \w \to \w$.
\item
\begin{enumerate}
 \item $\forall F\in \F_{\A}(f^{-1}(F)\in \F_{\B})$, and
 \item $\forall G\in \F_{\B}(f[G]\in \F_{\A})$.
\end{enumerate}
\item
\begin{enumerate}
\item $\forall n<\w \forall B\in \B(|f^{-1}(n)\cap B|<\w)$,

\item $\forall B\in \B \forall C\in  \A^{\bot}(|f[B]\cap C|<\w)$, and

\item $\forall S \in C(\A) \exists B\in \B(|f^{-1}(S)\cap B|=\w)$.
\end{enumerate}
\end{enumerate}
\end{theorem}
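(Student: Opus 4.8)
The plan is to prove the equivalence of the three statements in Theorem~\ref{theoeq} by a cycle of implications, say $(1)\Rightarrow(2)\Rightarrow(3)\Rightarrow(1)$, unwinding the definitions of the filters $\F_\A$ and $\F_\B$ in terms of the $AD$-families.

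First I would handle $(1)\Leftrightarrow(2)$, which is essentially formal. Recall that $\F_\A\leq_{RK}\F_\B$ via $f$ means $F\in\F_\A$ iff $f^{-1}(F)\in\F_\B$. The forward direction of $(2)$ is just the ``only if'' half of this, while $(2)(b)$ encodes the ``if'' half: if $G\in\F_\B$ then, writing $F=f[G]$, one checks $G\subseteq f^{-1}(f[G])=f^{-1}(F)$, so $f^{-1}(F)\in\F_\B$, hence $F\in\F_\A$; conversely if $f^{-1}(F)\in\F_\B$ then with $G=f^{-1}(F)$ we get $f[G]\subseteq F$ and $(2)(b)$ gives $F\in\F_\A$. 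Some care is needed because $f$ need not be onto here, but $\F_\A$ being a proper filter forces $f^{-1}(n)$ to be ``small'' relative to $\F_\B$; this is exactly what condition $(3)(a)$ will pin down, so I would fold these observations together when I get there.

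The substantive work is $(2)\Leftrightarrow(3)$, where I use the combinatorial description $\F_\A=\F_{\I(\A)}$-style: $F\in\F_\A$ iff $A\subseteq^*F$ for every $A\in\A$, equivalently iff $\w\setminus F\in\I(\A)$ after intersecting... more precisely iff every $A\in\A$ is almost contained in $F$. The key translations are: $(2)(a)$ for all $F\in\F_\A$ says $f^{-1}(F)\in\F_\B$, i.e., every $B\in\B$ is almost inside $f^{-1}(F)$, i.e., $f[B]\subseteq^* F$ for all $B\in\B$; running over all $F\in\F_\A$ this is equivalent to saying $f[B]$ converges to $\F_\A$ for each $B$, which by Lemma~\ref{simon}'s remark splits into: $f[B]$ meets every $C\in\A^\perp$ finitely (that is $(3)(b)$), together with the requirement that $f^{-1}(n)$ not swallow an infinite piece of any $B$ (that is $(3)(a)$, guaranteeing the relevant sets are genuinely infinite and that $f^{-1}(F)$ is in $\F_\B$ rather than failing for triviality reasons). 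Then $(2)(b)$, which says $f[G]\in\F_\A$ for all $G\in\F_\B$, is the ``surjectivity onto $C(\A)$'' condition: it fails exactly when some $S\in C(\A)$ has $f^{-1}(S)$ disjoint-ish from every $B\in\B$, i.e., $|f^{-1}(S)\cap B|<\w$ for all $B$, which would let us build $G\in\F_\B$ with $f[G]\cap S$ finite, violating $f[G]\in\F_\A$ since $S\to\F_\A$. This is precisely the contrapositive of $(3)(c)$.

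The main obstacle I anticipate is getting the quantifier bookkeeping exactly right in the $(2)(a)\Leftrightarrow(3)(a)\wedge(3)(b)$ step: one must argue that ``$f[B]\to\F_\A$ for every $B\in\B$'' is equivalent to the conjunction of the pointwise-preimage-finiteness condition and the $\A^\perp$-condition, and this uses the characterization of $C(\F_\A)=\I_{\F_\A}^\perp$ together with the fact (from the discussion after Lemma~\ref{simon}) that a set converges to $\F_\A$ iff it is almost disjoint from everything in $\A^\perp$. I would also need to double-check that $(3)(a)$ cannot be dropped: without it, $f[B]$ could be finite or $f^{-1}(F)$ could fail to be in the filter even though $f[B]$ is ``correct'' set-theoretically. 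Once these translations are in place, assembling the cycle $(1)\Rightarrow(2)\Rightarrow(3)\Rightarrow(1)$ is routine, citing Theorem~\ref{new-RK} only if one wants to remark that the $f$ here may be taken onto, which is not actually needed for the stated equivalence.
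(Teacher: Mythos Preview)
The paper does not actually prove this theorem; it states the result and refers to \cite[Th.~3.4]{gr01} for the proof. So there is no in-paper argument to compare your proposal against.

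That said, your outline is sound and is the natural route. The equivalence $(1)\Leftrightarrow(2)$ is indeed formal. For $(2)\Leftrightarrow(3)$ your key translations are correct: using $\I_{\F_\A}=\A^{\perp}\cup[\w]^{<\w}$ (which follows from the definition of $\F_\A$) one gets $C(\F_\A)=\A^{\perp\perp}$, and then $(2)(a)$ unpacks exactly to $(3)(a)\wedge(3)(b)$, with $(3)(a)$ obtained from $(2)(a)$ applied to the cofinite sets $\w\setminus\{n\}$ and genuinely needed for the converse (to pass from $|f[B]\cap C|<\w$ back to $|B\cap f^{-1}(C)|<\w$). For $(2)(b)\Leftrightarrow(3)(c)$ your contrapositive argument is right: if $(2)(b)$ fails via some $G\in\F_\B$, take $S=A\setminus f[G]$ for a suitable $A\in\A$; if $(3)(c)$ fails via some $S$, take $G=\w\setminus f^{-1}(S)$. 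One minor point: the remark after Lemma~\ref{simon} that you invoke says something slightly different (existence of $A\in\A$ meeting a convergent sequence); what you actually need and use is the identity $C(\F_\A)=\A^{\perp\perp}$, which is immediate from $\I_{\F_\A}=\A^{\perp}\cup[\w]^{<\w}$ and item~(2) at the start of Section~2. No appeal to Theorem~\ref{new-RK} is required.
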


The next result can be obtained by a slight modification of the proof of the previous theorem.

\begin{theorem}\label{theoeq2} Let $\A$ and $\B$ be $NMAD$-families on $\w$.
The following statements are equivalent.
\begin{enumerate}
 \item $S_{\A}\leq_{RK}S_{\B}$ via the function $f: \w \to \w$.
\item
\begin{enumerate}
 \item $\forall F\in S_{\A}(f^{-1}(F)\in S_{\B})$, and
 \item $\forall G\in S_{\B}(f[G]\in S_{\A})$.
\end{enumerate}
\item
\begin{enumerate}
\item $\forall I\in \I(\A) (f^{-1}(I)\in \I(\B))$, and
\item $\forall M \in \I(\A)^{\bot}\cap[\w]^{\w} \exists R\in \I(\B)^{\bot}\cap[\w]^{\w}(|f^{-1}(M)\cap R|=\w)$.
\end{enumerate}
\end{enumerate}
\end{theorem}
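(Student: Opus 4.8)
The plan is to follow the template of the proof of Theorem~\ref{theoeq}, replacing the dual ideal $\A^{\perp}$ of $\F_{\A}$ by the dual ideal $\I_{S_{\A}}=\I(\A)$ of $S_{\A}$. Throughout I will use that $F\in S_{\A}$ iff $\w\setminus F\in\I(\A)$, so that $I\mapsto\w\setminus I$ is a bijection between $\I(\A)$ and $S_{\A}$; that $S\to S_{\A}$ iff $S\in\I(\A)^{\perp}$ (fact~(2) preceding Lemma~\ref{simon}); and the trivial remark $\I(\A)\cap\I(\A)^{\perp}\subseteq[\w]^{<\w}$. The equivalence $(1)\Leftrightarrow(2)$ is purely formal, as in Theorem~\ref{theoeq}: by definition $f[S_{\B}]=S_{\A}$ means $f^{-1}(F)\in S_{\B}\Leftrightarrow F\in S_{\A}$ for all $F\subseteq\w$. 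One implication of this biconditional is precisely (2a); for the other, given (2a) and (2b), if $f^{-1}(F)\in S_{\B}$ then $f[f^{-1}(F)]=F\cap\mathrm{ran}(f)\in S_{\A}$ by (2b), whence $F\in S_{\A}$ by upward closure of $S_{\A}$. Conversely, if $f[S_{\B}]=S_{\A}$ then (2b) follows from $f^{-1}(f[G])\supseteq G$, and (2a) is immediate.

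I would then prove $(2)\Leftrightarrow(3)$ componentwise. For $(2a)\Leftrightarrow(3a)$: since $f^{-1}(\w\setminus F)=\w\setminus f^{-1}(F)$ and complementation is a bijection from $\I(\A)$ onto $S_{\A}$ and from $\I(\B)$ onto $S_{\B}$, the assertion ``$F\in S_{\A}\Rightarrow f^{-1}(F)\in S_{\B}$ for all $F$'' is literally ``$I\in\I(\A)\Rightarrow f^{-1}(I)\in\I(\B)$ for all $I$''. For $(2b)\Rightarrow(3b)$: given $M\in\I(\A)^{\perp}\cap[\w]^{\w}$, I claim $f^{-1}(M)\in\I(\B)^{+}$; otherwise $\w\setminus f^{-1}(M)\in S_{\B}$, so by (2b) $f[\w\setminus f^{-1}(M)]=\mathrm{ran}(f)\setminus M\in S_{\A}$, i.e. $(\w\setminus\mathrm{ran}(f))\cup M\in\I(\A)$, and downward-closure of $\I(\A)$ then gives $M\in\I(\A)$, contradicting $M\in\I(\A)^{\perp}\cap[\w]^{\w}$. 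Since $\B$ is an $NMAD$-family there is $R\in\I(\B)^{\perp}\cap[f^{-1}(M)]^{\w}$, and this $R$ witnesses (3b).

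For the remaining implication $(3)\Rightarrow(2b)$: let $G\in S_{\B}$ and suppose $f[G]\notin S_{\A}$, i.e. $M:=\w\setminus f[G]\in\I(\A)^{+}\cap[\w]^{\w}$. As $\A$ is an $NMAD$-family, pick $M'\in\I(\A)^{\perp}\cap[M]^{\w}$, and by (3b) pick $R\in\I(\B)^{\perp}\cap[\w]^{\w}$ with $|f^{-1}(M')\cap R|=\w$. Since $\w\setminus G\in\I(\B)$ and $R\in\I(\B)^{\perp}$ we have $R\subseteq^{*}G$, so $f^{-1}(M')\cap R\subseteq^{*}f^{-1}(M')\cap G$; but the latter set is empty because $M'\subseteq M$ is disjoint from $f[G]$. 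This contradiction shows $f[G]\in S_{\A}$.

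The only genuinely nonroutine point is the equivalence $(2b)\Leftrightarrow(3b)$, which is where the hypothesis that $\A$ and $\B$ are $NMAD$-families is used: of $\B$ to extract the convergent sequence $R$ from the positive set $f^{-1}(M)$, and of $\A$ to shrink a positive set to a convergent sequence $M'$ inside it, together with the bookkeeping identity $f[\w\setminus f^{-1}(M)]=\mathrm{ran}(f)\setminus M$. No surjectivity of $f$ is needed, though one could assume $\mathrm{ran}(f)=\w$ by Theorem~\ref{new-RK}. All remaining steps are just the complementation bijection between a filter $S_{\C}$ and the ideal $\I(\C)$ together with downward-closure of ideals, exactly as in the proof of Theorem~\ref{theoeq}.
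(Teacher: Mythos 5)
Your proof is correct, and it takes exactly the route the paper intends: the paper gives no explicit argument for Theorem~\ref{theoeq2}, stating only that it follows by a slight modification of the proof of Theorem~\ref{theoeq}, and your write-up carries out precisely that modification (complementation between $S_{\C}$ and $\I(\C)$, plus using the $NMAD$ property of $\B$ to shrink $f^{-1}(M)$ and of $\A$ to shrink $\w\setminus f[G]$ to convergent sequences). No gaps; your handling of $\mathrm{ran}(f)$ even makes the equivalence independent of the surjectivity convention.
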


the last two theorems will be very useful in the construction of spacial filters.

%%%%%%%%%%%%%%%%%%%%%%%%%%%%%%%%%%%%%%%%%%%%%%%%%%%%%
%%%%%%%%%%%%%%%%%%%%%%%%%%%%%%%%%%%%%%%%%%%%%%%%%%%%%%%%%%%
\section{$FAN$-filter}
%%%%%%%%%%%%%%%%%%%%%%%%%%%%%%%%

In what follows, $\P$ will stand for an infinite partition of $\w$ in infinite subsets. In the paper \cite{gr01}, we proved that the $FAN$-filter $\F_\P$ and the $S_\P$ filter are not $RK$-comparable. We know, by lemma \ref{char-nmad}, that the only filters which are $RK$-predecessors of
$S_{\P}$ are either itself or a filter that is relatively equivalent to the Fr\'echet filter. All these remarks
lead us to ask in \cite[Q. 5.12]{gr01}  what are the $RK$-predecessors of the $FAN$-filter ?:

\medskip

{\bf Question.} Is there an $AD$-family $\A$ such that $\F_r <_{RK} \F_\A <_{RK} \F_\P$ ?

\medskip

We shall respond this question in the next theorem.

\begin{lemma}
Let $\A$ be an $AD$-family on $\w$ and $f: \w \to \w$ be a surjective function such that
the restriction  $f|_{\A}$ is finite-to-one for each $A\in \A$.
If $\mathcal{D}_f :=\{f[A]:A\in\A\}$, then $\F_{\mathcal{D}_f}\leq_{RK}\F_A$ via $f$.
\end{lemma}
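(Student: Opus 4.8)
The plan is to verify condition (3) of Theorem~\ref{theoeq}, namely items (a), (b), (c), for the family $\B := \A$ (so "$\B$" plays the role of $\A$) and the target family $\mathcal{D}_f$; then the equivalence in Theorem~\ref{theoeq} yields $\F_{\mathcal{D}_f}\leq_{RK}\F_\A$ via $f$. Of course one must first note that $\mathcal{D}_f$ is (generates) an $AD$-family: since $f|_A$ is finite-to-one, each $f[A]$ is infinite, and if $A\neq A'$ in $\A$ then $f[A]\cap f[A']$ is a subset of $f[(A\cap A')\cup(\text{finite fibers over }f[A]\cap f[A'])]$ — more carefully, if $|f[A]\cap f[A']|=\w$ then, fibers being finite, $A\cup A'$ would meet infinitely many fibers over an infinite set, forcing $|A\cap A'|=\w$ (using finite-to-one on $A$ and on $A'$ separately), contradicting almost disjointness. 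So $\mathcal{D}_f$ is an infinite $AD$-family on $\w$, and $\F_{\mathcal{D}_f}$ is a $FU$-filter by the remark following Lemma~\ref{simon}.

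Now I check the three clauses with $\B$ replaced by $\A$ throughout. For (a): fix $n<\w$ and $A\in\A$; then $f^{-1}(n)\cap A$ is a single fiber of the finite-to-one map $f|_A$, hence finite. For (b): we need $\forall A\in\A\,\forall C\in\mathcal{D}_f^{\bot}\,(|f[A]\cap C|<\w)$. But $f[A]\in\mathcal{D}_f$ by definition of $\mathcal{D}_f$, so any $C\in\mathcal{D}_f^{\bot}$ has $|f[A]\cap C|<\w$ immediately from the definition of $(\cdot)^{\bot}$. For (c): we need $\forall S\in C(\mathcal{D}_f)\,\exists A\in\A\,(|f^{-1}(S)\cap A|=\w)$. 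Take $S\to\F_{\mathcal{D}_f}$; since $\F_{\mathcal{D}_f}=\F_{\mathcal{D}_f}$ is generated by the $AD$-family $\mathcal{D}_f$, by the remark after Lemma~\ref{simon} there is some $A\in\A$ with $|S\cap f[A]|=\w$. Then, because $f|_A$ is finite-to-one, the preimage in $A$ of the infinite set $S\cap f[A]$ is infinite, i.e. $|f^{-1}(S)\cap A|=\w$. This gives (c), and the surjectivity of $f$ supplies the hypothesis of Theorem~\ref{theoeq} that $f\colon\w\to\w$ is onto.

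The step I expect to require the most care is clause (b), and more precisely the underlying claim that $f[A]\in\mathcal{D}_f$ is "orthogonal-detected" correctly: one must be sure that $\mathcal{D}_f^{\bot}$ is computed relative to the family $\mathcal{D}_f$ and not relative to $\A$, and that the finite-to-one hypothesis is genuinely needed to keep the $f[A]$ infinite and pairwise almost disjoint (without it, $\mathcal{D}_f$ could collapse to a finite family or even to $\{\w\}$, and $\F_{\mathcal{D}_f}$ would degenerate). A secondary subtlety is that Theorem~\ref{theoeq} is stated for $AD$-families on $\w$, so one should remark explicitly that after identifying $\mathcal{D}_f$ with an $AD$-family (via the convention adopted after Lemma~\ref{simon}) the theorem applies verbatim. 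Everything else is a direct unwinding of definitions, so once (a), (b), (c) are in hand the conclusion $\F_{\mathcal{D}_f}\leq_{RK}\F_\A$ via $f$ is immediate from Theorem~\ref{theoeq}, $(3)\Rightarrow(1)$.
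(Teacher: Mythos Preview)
Your argument contains a genuine error: the claim that $\mathcal{D}_f$ is an $AD$-family is false. The hypothesis only says that $f|_A$ is finite-to-one for each $A\in\A$, not that the global fibers $f^{-1}(n)$ are finite; and even with finite fibers, $n\in f[A]\cap f[A']$ merely gives $a\in A$ and $a'\in A'$ with $f(a)=f(a')=n$, with no reason for $a=a'$. Concretely, take a partition $\{N_k:k<\w\}$ of $\w$ into infinite sets, let $\A=\{N_k:k<\w\}$, and let $f$ restrict to a bijection $N_k\to\w$ for every $k$. Then each $f|_{N_k}$ is one-to-one and $f$ is onto, yet $f[N_k]=\w$ for all $k$, so $\mathcal{D}_f=\{\w\}$ is not an $AD$-family. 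Thus Theorem~\ref{theoeq} does not apply as stated, and your justification of clause~(c) via ``the remark after Lemma~\ref{simon}'' (which is phrased for $AD$-families) is not on firm ground.

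The paper sidesteps this entirely by arguing directly from the definition $\F_{\mathcal{D}_f}=\{F:\forall A\in\A\,(f[A]\subseteq^*F)\}$ and proving the two inclusions $\F_{\mathcal{D}_f}\subseteq f[\F_\A]$ and $f[\F_\A]\subseteq\F_{\mathcal{D}_f}$ by short contradiction arguments, never needing $\mathcal{D}_f$ to be almost disjoint. Your route is salvageable if you first replace $\mathcal{D}_f$ by an $AD$-family $\A'$ with $\F_{\A'}=\F_{\mathcal{D}_f}$ (as the paper notes is always possible) and then observe that $(\A')^{\perp}=\mathcal{D}_f^{\perp}$ and $C(\F_{\A'})=C(\F_{\mathcal{D}_f})$, so conditions (3)(b) and (3)(c) can still be verified using the sets $f[A]$; alternatively one can check that the proof of $(3)\Rightarrow(1)$ in Theorem~\ref{theoeq} only uses $\F_\A$ rather than the $AD$-structure of $\A$. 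Either way, the gap must be closed, and the incorrect almost-disjointness argument should be removed.
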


\begin{proof}
We have to prove that $\F_{\mathcal{D}_f}=f[\F_{\A}]$. Let $F\in\F_{\mathcal{D}_f}$ and assume
that $F\notin f[\F_{\A}]$. Then there is $A\in \A$ such that
$A\setminus f^{-1}(F)$ is infinite. Then, we have that $f[A\setminus f^{-1}(F)]$ is infinite
 and $f[A\setminus f^{-1}(F)]\to \F_{D_f}$, but $f[A\setminus f^{-1}(F)]\cap F=\vac$ which is
 impossible. Thus $F\in f[\F_{\A}]$. So, we obtain that $\F_{\mathcal{D}_f} \subseteq f[\F_{\A}]$.
Now fix $H\in f[\F_{\A}]$ and suppose that $H\notin \F_{\mathcal{D}_f}$. Then there is $A\in \A$ such that
$f[A]\setminus H$ is infinite. We know that $A\subseteq^*f^{-1}(H)$ which implies that $f[A]\subseteq^* H$, but this is a contradiction.
Thus $H\in \F_{\mathcal{D}_f}$. This proves that $f[\F_{\A}] \subseteq \F_{\mathcal{D}_f}$. Therefore,
$\F_{\mathcal{D}_f}=f[\F_{\A}]$ and hence $\F_{\mathcal{D}_f}\leq_{RK}\F_A$ via the function $f$.
 \end{proof}

\begin{lemma}
 If $\F_{\A}\leq_{RK}\F_{\B}$ via the function $f$, then $\F_{\mathcal{D}_f} = \F_{\A}$.
\end{lemma}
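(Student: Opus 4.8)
The plan is to show the two inclusions $\F_{\mathcal{D}_f}\subseteq\F_{\A}$ and $\F_{\A}\subseteq\F_{\mathcal{D}_f}$ directly from the definitions of the filters generated by a family of sets, using the hypothesis $\F_{\A}\leq_{RK}\F_{\B}$ via $f$ only through its consequence (from Theorem~\ref{theoeq}) that $f|_B$ is finite-to-one for every $B\in\B$ and that $f[B]\in\F_{\A}$ for every $B\in\B$; in particular $f$ restricted to any $A\in\A$ is finite-to-one, so $\mathcal{D}_f=\{f[A]:A\in\A\}$ is a well-defined family of infinite sets and the previous lemma applies. Recall that $\F_{\mathcal{D}_f}=\{F\subseteq\w:\forall A\in\A\,(f[A]\subseteq^*F)\}$ and $\F_{\A}=\{F\subseteq\w:\forall A\in\A\,(A\subseteq^*F)\}$.

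First I would prove $\F_{\A}\subseteq\F_{\mathcal{D}_f}$. Fix $F\in\F_{\A}$ and $A\in\A$; then $A\subseteq^*F$, and since $f$ is finite-to-one on $A$ we get $f[A]\subseteq^*f[A\cap F]\subseteq f[F]$. That is not quite what we want, so instead observe that $f[A]\setminus F\subseteq f[A\setminus F]$ wait — we need a set downstairs. The cleaner route: since $A\subseteq^* F$, write $A\setminus F$ finite; then $f[A]\setminus F\subseteq f[A\setminus (f^{-1}(F))]$, and $A\setminus f^{-1}(F)=A\setminus f^{-1}(F)$ is finite because $A\setminus F$ finite does \emph{not} immediately give this. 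The correct argument is the one already used in the proof of the preceding lemma: $A\subseteq^* F$ does not hold in general for $F\in\F_{\A}$ — it does, by definition of $\F_{\A}$. So $A\setminus F$ is finite; we want $f[A]\setminus F$ finite. But $f[A]\setminus F\subseteq f[A]\cap(\w\setminus F)$, and a point $n\in f[A]\setminus F$ has a preimage in $A$ lying in $f^{-1}(\w\setminus F)\cap A = A\setminus f^{-1}(F)\supseteq A\setminus F$... this can be infinite. So this inclusion is the subtle one and I would instead deduce $\F_{\A}\subseteq\F_{\mathcal{D}_f}$ from $\F_{\mathcal{D}_f}=f[\F_{\A}]$ (the previous lemma) together with: for $F\in\F_{\A}$ one has $f^{-1}(F)\supseteq F\cap(\text{large set})$... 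Let me restate: by the previous lemma $\F_{\mathcal{D}_f}=f[\F_{\A}]=\{F:f^{-1}(F)\in\F_{\A}\}$. If $F\in\F_{\A}$, is $f^{-1}(F)\in\F_{\A}$? We need $A\subseteq^* f^{-1}(F)$ for all $A$, i.e. $A\setminus f^{-1}(F)$ finite; but $A\setminus f^{-1}(F)=\{n\in A: f(n)\notin F\}$, and since $f[A]$ is a fixed infinite set with $f|_A$ finite-to-one, $A\setminus f^{-1}(F)=\bigcup_{k\in f[A]\setminus F}(A\cap f^{-1}(k))$ is finite iff $f[A]\setminus F$ is finite, i.e. iff $f[A]\subseteq^* F$ — which is exactly $F\in\F_{\mathcal{D}_f}$. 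So this direction really says $\F_{\A}\subseteq\F_{\mathcal{D}_f}$ would need $f[A]\subseteq^* F$, not obvious from $A\subseteq^* F$.

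The resolution, and the real content, is to use that $\F_{\A}\leq_{RK}\F_{\B}$ via $f$, hence by Theorem~\ref{theoeq}(2) we have $f[G]\in\F_{\A}$ for every $G\in\F_{\B}$ and $f^{-1}(F)\in\F_{\B}$ for every $F\in\F_{\A}$; combined with the previous lemma which gives $\F_{\mathcal{D}_f}=f[\F_{\A}]$, it suffices to show $f[\F_{\A}]=\F_{\A}$, equivalently that $f$ witnesses $\F_{\A}\leq_{RK}\F_{\A}$. For $F\in\F_{\A}$, $f^{-1}(F)\in\F_{\B}$, so $f[f^{-1}(F)]\in\F_{\A}$; but $f$ surjective gives $f[f^{-1}(F)]=F$, hence $F\in f[\F_{\A}]$. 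Conversely if $F\in f[\F_{\A}]$ then $f^{-1}(F)\in\F_{\A}\subseteq\F_{\B}$, and then $F=f[f^{-1}(F)]\in f[\F_{\B}]=\F_{\A}$. Thus $f[\F_{\A}]=\F_{\A}=\F_{\mathcal{D}_f}$, completing the proof. The main obstacle, as the false starts above show, is that $\F_{\mathcal{D}_f}=\F_{\A}$ is \emph{false} for a general surjection $f$ that is finite-to-one on each $A\in\A$ (it only gives $\F_{\mathcal{D}_f}\leq_{RK}\F_{\A}$); the hypothesis $\F_{\A}\leq_{RK}\F_{\B}$ via $f$ is essential and must be fed in precisely at the point where we need $f[F]\in\F_{\A}$ for $F\in\F_{\B}\supseteq\F_{\A}$, together with surjectivity to cancel $f\circ f^{-1}$.
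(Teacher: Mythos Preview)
Your proposal rests on a misreading of $\mathcal{D}_f$. In this lemma the function $f$ witnesses $\F_{\A}\leq_{RK}\F_{\B}$, so the family on which $f$ is finite-to-one is $\B$ (Theorem~\ref{theoeq}(3)(a)), and the relevant image family is $\mathcal{D}_f=\{f[B]:B\in\B\}$, not $\{f[A]:A\in\A\}$. The paper's own proof confirms this: its first line is ``for each $B\in\B$, $f[B]\subseteq^*F$ for all $F\in\F_{\A}$'', which is exactly the statement $\F_{\A}\subseteq\F_{\{f[B]:B\in\B\}}$. With your reading the statement is actually false: take $\A=\{E\}$ where $E$ is the set of even numbers, $\B=\{O\}$ where $O$ is the odds, and let $f$ be the bijection swapping evens and odds. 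Then $\F_{\A}\leq_{RK}\F_{\B}$ via $f$, yet $\{f[A]:A\in\A\}=\{O\}$ and $\F_{\{O\}}\neq\F_{\{E\}}=\F_{\A}$.

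Several steps in your argument break accordingly. The inference ``$f|_B$ finite-to-one for $B\in\B$, in particular $f|_A$ finite-to-one for $A\in\A$'' is a non sequitur: $\A$ and $\B$ are unrelated families. In your final paragraph, the direction $\F_{\A}\subseteq f[\F_{\A}]$ is circular: from $F\in\F_{\A}$ you obtain $f[f^{-1}(F)]=F\in\F_{\A}$, which is what you started with and says nothing about whether $f^{-1}(F)\in\F_{\A}$. The converse direction invokes $\F_{\A}\subseteq\F_{\B}$, which is not implied by $\F_{\A}\leq_{RK}\F_{\B}$ (the example above again). Once $\mathcal{D}_f$ is read correctly, the lemma is in fact immediate from the previous one: Theorem~\ref{theoeq}(3)(a) gives that $f|_B$ is finite-to-one for every $B\in\B$, so the previous lemma applied to $\B$ yields $f[\F_{\B}]=\F_{\mathcal{D}_f}$; combined with the hypothesis $f[\F_{\B}]=\F_{\A}$ this gives $\F_{\mathcal{D}_f}=\F_{\A}$. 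The paper instead argues the two inclusions directly, using Theorem~\ref{theoeq}(3)(c) for $\F_{\mathcal{D}_f}\subseteq\F_{\A}$.
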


\begin{proof}
Observe that for each $B\in \B$, $f[B]\subseteq^* F$ for all $F\in \F_{\A}$. Hence, we have that  $\F_{\A}\subseteq \F_{\mathcal{D}_f}$.
Now let $G\in \F_{\mathcal{D}_f}$ and suppose that $G \notin \F_{\A}$. Then we can find $A\in \A$ such that
$|A\setminus G|=\w$. Since $A\setminus G\to \F_{\A}$, by Theorem \ref{theoeq}, there is $B\in \B$ such that $B\cap (f^{-1}(A)\setminus f^{-1}(G))$ is infinite,
 but this contradicts the fact $B\subseteq^* f^{-1}(G)$. Thus, we must have that $G\in \F_A$. Therefore, $\F_{\mathcal{D}_f} = \F_{\A}$.
\end{proof}

\begin{theorem}\label{theofan}
If $\F\leq_{RK} \F_{\P}$, then either $\F$ is  relatively equivalent to the Fr\'echet
filter or equivalent to $\F_{\P}$.
\end{theorem}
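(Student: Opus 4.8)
The plan is to treat the statement as a genuine dichotomy. If $\F$ is relatively equivalent to the Fr\'echet filter we are in the first case and there is nothing to do, so the work is carried out under the assumption that $\F$ is \emph{not} relatively equivalent to the Fr\'echet filter; I will then show that $\F$ actually \emph{equals} a $FAN$-filter, and hence is equivalent to $\F_\P$, because any two partitions of $\w$ into infinitely many infinite blocks are carried onto each other by a bijection of $\w$.

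First I would fix, via Theorem~\ref{new-RK} (which applies since $\F_\P\neq\F_r$ and $\F$ is assumed not relatively equivalent to the Fr\'echet filter), a surjection $f:\w\to\w$ with $f[\F_\P]=\F$. The decisive structural observation is that $f$ is finite-to-one on each block $P_n$: if some fibre $f^{-1}(k)$ met $P_n$ in an infinite set, then for every $F\in\F$ we would have $f^{-1}(F)\in\F_\P$, hence $P_n\subseteq^* f^{-1}(F)$, and this forces $k\in F$; thus $k\in\bigcap\F$, contradicting that $\F$ is free. Granting this, the first of the two lemmas preceding this theorem, applied to the $AD$-family $\P$ and the map $f$, yields $\F_{\mathcal{D}_f}\leq_{RK}\F_\P$ via $f$; that is, $f[\F_\P]=\F_{\mathcal{D}_f}$, where $\mathcal{D}_f=\{Q_n:n<\w\}$ and $Q_n:=f[P_n]$. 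Comparing with $f[\F_\P]=\F$ gives $\F=\F_{\mathcal{D}_f}$, and moreover every $Q_n$ is infinite while $\bigcup_n Q_n=\w$.

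The core is now a combinatorial fact about the countable family $\{Q_n:n<\w\}$, and the plan is to reduce it to an honest partition. I would first build recursively a countable pairwise disjoint family $\mathcal{A}=\{A_m\}$ of infinite sets with $\F_{\mathcal{A}}=\F_{\mathcal{D}_f}$: processing the $Q_n$ one by one, discard $Q_n$ if it is almost contained in the union of the pieces $A_i$ already produced (for then the requirement $Q_n\subseteq^* F$ is already implied by the requirements $A_i\subseteq^* F$), and otherwise let the next piece be $Q_n$ minus that union, which is infinite and disjoint from all earlier pieces. That $\F_{\mathcal{A}}=\F_{\mathcal{D}_f}$ is routine: $\F_{\mathcal{D}_f}\subseteq\F_{\mathcal{A}}$ because each $A_m$ is a subset of some $Q_n$, and the reverse inclusion holds because every $Q_n$ is, up to a finite set, contained in a finite union of the $A_i$'s. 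If $\mathcal{A}$ is finite then $\F_{\mathcal{A}}=\{F:\bigcup\mathcal{A}\subseteq^* F\}$ is relatively equivalent to the Fr\'echet filter, against our hypothesis; so $\mathcal{A}$ is infinite. Finally I would distribute the leftover set $Z=\w\setminus\bigcup\mathcal{A}$ over the pieces one point at a time (send the $n$-th element of $Z$ into $A_n$), obtaining a partition $\P'$ of $\w$ into infinitely many infinite blocks with $\F_{\P'}=\F_{\mathcal{A}}=\F$ --- the equality is immediate since a single point does not affect $\subseteq^*$. Hence $\F=\F_{\P'}\approx\F_\P$, which is the second alternative.

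The step I expect to demand the most care is the recursion together with the identity $\F_{\mathcal{A}}=\F_{\mathcal{D}_f}$, in particular the inclusion $\F_{\mathcal{A}}\subseteq\F_{\mathcal{D}_f}$, where one must track precisely why each discarded $Q_n$ is almost covered by finitely many $A_i$ and why each retained $Q_n$ lies inside $A_0\cup\dots\cup A_m$. The other point, easy to underestimate, is that freeness of $\F$ is exactly what forbids $f$ from collapsing an infinite subset of some $P_n$ to a single point; without it, $f[\F_\P]$ could be a filter that is neither a $FAN$-filter nor relatively equivalent to the Fr\'echet filter, so this observation is doing real work.
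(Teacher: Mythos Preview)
Your proof is correct and follows essentially the same route as the paper's: establish $\F=\F_{\mathcal{D}_f}$ for $\mathcal{D}_f=\{f[P_n]:n<\w\}$, then recursively thin $\mathcal{D}_f$ to a pairwise disjoint infinite family, concluding that a finite outcome forces the Fr\'echet case while an infinite one yields a $FAN$-filter. The only differences are cosmetic --- you reach $\F=\F_{\mathcal{D}_f}$ via the first preceding lemma after verifying finite-to-one-ness on each block (the paper invokes the second lemma directly), and you take the extra step of distributing the leftover points to obtain an honest partition, which the paper leaves implicit.
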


\begin{proof} It is known that $\F$ is an $FU$-filter (for a proof see \cite{gr01}).
Let $f:\w \to \w$ be a function such that $f[\F_{\P}]=\F$ and $\P=\{P_n:n<\w\}$.
Notice that $\mathcal{D}_f=\{f[P_n]:n<\w\}$ is a cover of $\w$ and $\F_{\mathcal{D}_f}=\F$ by the previous lemma.
First assume that there is $n < \w$ such that $|f[P_{m}] \setminus \big(\bigcup_{i<n} f[P_i] \big)| < \w$ for all $n < m < \omega$.
It is clearly that $\bigcup_{i<n_k} f[P_i]=A\in \F$ and since $A\to \F$, we also have that
$\F$ is  relatively equivalent to the Fr\'echet filter. Now,
set $Q_0=f[P_0]$ and inductively define
a pairwise disjoint family $\Q=\{ Q_k : k < \w\}$ of infinite subsets of $\w$, and a strictly increasing sequence $(n_k)_{k < \w}$ in $\omega$ so that $n_0 = 0$,
$Q_{k}=f[P_{n_{k}}]\setminus \big(\bigcup_{i<n_{k-1}} Q_i\big)$ is infinite and $n_{k}$ is the smallest with this property.
Since each element of $\Q$ is contained in an element of $\mathcal{D}_f$, we have that $\F_{\mathcal{D}_f}\subseteq \F_{\Q}$.
Let $F\in \F_{\Q}$ and fix $P_n\in \P$. Let
$k=min\{ i < \w : n \leq n_i <\w\}$. Then, by construction, we obtain that
$f[P_n]\subseteq^*\bigcup_{i\leq n_k} Q_i\subseteq^*F$. Hence, $F \in \F$. Therefore, $\F=\F_{\Q}$.
\end{proof}

To finish this section we pose the following question.

\begin{question} Let $\F$ be a $FU$-filter non-equivalent to the Fr\'echet
filter. If $\G \leq_{RK} \F$ implies that either $\G$ is  relatively equivalent to the Fr\'echet
filter or equivalent  to $\G$, must $\F$ be equivalent to the $FAN$-filter ?
\end{question}

%%%%%%%%%%%%%%%%%%%%%%%%%%%%%%%%%%%%%%%%%%%%%%%%%%%%%%%%%%
%%%%%%%%%%%%%%%%%%%%%%%%%%%%%%%%%%%%%%%%%%%%%%%%%%%%%%%%%%%%%%%%%%%%%%%%%%%%%%%
\section{Todor\v{c}evi\'c-Uzc\'ategui pre-order}
%%%%%%%%%%%%%%%%%%%%%%%%%%%%%%%%%%%%%%%%%%%%%%%%%%%%%%%%%%%%%%%%%%%%%%%%%%%%%%%

In this section, we shall compare the $FU$-filters by using the  pre-order which has been introduced in Definition \ref{tu}. The first goal is to
compare this  pre-order with the $RK$-order. To do that we
reformulate the definition of the $TU$-order to make it a little bite easier to handle. Before this reformulation we need to prove a well-known property of certain  filters.

\begin{lemma}\label{lemref}
Let $\F$ be a filter non-relatively equivalent to the Fr\'echet filter.
Then for every $A\in \F\setminus \F_r$ there is a bijection  $f: A \to \w$ such that $f[\F|_{A}]=\F$. In particular,
$\F \approx \F|_A$ for all $A \in \F \setminus \F_r$.
\end{lemma}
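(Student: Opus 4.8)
The plan is to realise the bijection by ``absorbing'' the complement $\w\setminus A$ into a piece of $A$ that is negligible for $\F$ (i.e. lies in the dual ideal $\I_\F$), while keeping the identity everywhere else. First I would record three elementary observations: since $A\in\F$ and $\F$ is free, one has $\F|_A=\{C\subseteq A: C\in\F\}$, one has $\F_r(A)\subseteq\F|_A$, and for any map $f:A\to\w$ the set $f[\F|_A]$ equals $\{B\subseteq\w: f^{-1}(B)\in\F\}$. Thus the task reduces to producing a bijection $f:A\to\w$ with $f^{-1}(B)\in\F\iff B\in\F$ for every $B\subseteq\w$.

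The crucial step is to find an infinite set $E\subseteq A$ with $E\in\I_\F$. If no such $E$ existed, then every element of $\F$ contained in $A$ would be cofinite in $A$, that is $\F|_A=\F_r(A)$; but then $A\setminus F$ is finite for every $F\in\F$, so $A\to\F$, and together with $A\in\F$ this says precisely that $\F$ is relatively equivalent to the Fr\'echet filter, contrary to hypothesis. Hence such an $E$ exists. Put $A'=A\setminus E$, so $A'\in\F$ (because $A\in\F$ and $\w\setminus E\in\F$) and $A=A'\sqcup E$ with $E$ infinite.

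Now both $E$ and $E\cup(\w\setminus A)$ are countably infinite, so I would fix a bijection $g:E\to E\cup(\w\setminus A)$ and define $f:A\to\w$ by $f(a)=a$ for $a\in A'$ and $f(a)=g(a)$ for $a\in E$. A direct check (using $A'\subseteq A$ and $A'\cap E=\emptyset$) shows $f$ is a bijection of $A$ onto $A'\cup E\cup(\w\setminus A)=\w$. For the filter equality, since $f$ is the identity on $A'$ we have $f^{-1}(B)\cap A'=B\cap A'$ for all $B\subseteq\w$; as $A'\in\F$ and $\F$ is a filter (closed under finite intersections and supersets), this yields $f^{-1}(B)\in\F\iff B\cap A'\in\F\iff B\in\F$. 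Therefore $f[\F|_A]=\F$, and the ``in particular'' clause is just the definition of $\approx$ for a filter on $A$ versus a filter on $\w$.

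I expect the only real content to lie in the extraction of $E$ in the second paragraph: the point is to recognise that the \emph{impossibility} of finding an infinite $E\subseteq A$ inside $\I_\F$ is exactly the relative-Fr\'echet situation excluded by hypothesis. Everything downstream — the disjointness bookkeeping that makes $f$ a bijection, and the one-line verification of the filter equivalence — is routine.
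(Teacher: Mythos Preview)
Your proof is correct and follows essentially the same construction as the paper: your pair $(A',E)$ is exactly the paper's $(B,\,A\setminus B)$, and in both cases the bijection is the identity on the large piece in $\F$ and an arbitrary bijection from the small piece in $\I_\F$ onto its complement in $\w$. Your write-up is in fact a bit more careful --- you spell out why the failure to find $E$ forces $A\to\F$, and your verification via $f^{-1}(B)\cap A'=B\cap A'$ is cleaner than the paper's element-chasing.
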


\begin{proof} Let  $A\in\F\setminus \F_r$.
Since $\F$ is not relatively equivalent to the Fr\'echet filter there is
$B\in \F\cap [A]^{\w}$ such that $A\setminus B\in [A]^{\w}$. Define
$f:A\to \w$ so that $f|_B$ is the identity on $B$ and
$f[A\setminus B]=\w\setminus B$ as a bijection. Let $F\in \F$. Then we have that
$$f^{-1}(F)=f^{-1}(F\cap B)\cup f^{-1}(F\setminus B)=(F\cap B)\cup f^{-1}(F\setminus B).$$
Since $F\cap B\in \F|_A$, we must have that  $f^{-1}(F)\in \F|_A$. This shows that $\F\subseteq \F|_A$.
Now, fix $G\in \F|_A$. Notice that
$$f[G]=f[G\cap B]\cup f[G\setminus B]=(G\cap B) \cup f[G\setminus B].$$
As $G\in \F|_{A}$, there is $H\in \F$ such that $G=H\cap A$ and since
that $G\cap B= H\cap A \cap B\in \F$, then  $f[G]\in \F$. Thus, $\F|_A \subseteq \F$. Therefore, $f[\F|_{A}]=\F$.
\end{proof}

The next corollary conveniently reformulates the definition of the $TU$-order.

\begin{corollary} Let $\F$ and $\G$ two filters. Then,
$\F\leq_{TU} \G$ iff either
\begin{enumerate}
\item $\F$ is a relatively equivalent to the Fr\'echet filter, or

\item there are $A\in \G^+$ and a bijection $f:A\to \w$  such that
$f[\G|_A]=\F$.
\end{enumerate}
\end{corollary}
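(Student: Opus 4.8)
The plan is to unwind the original definition of $\leq_{TU}$ from Definition \ref{tu}(2) and reconcile it with the reformulated version, splitting into the two cases according to whether $\F$ is relatively equivalent to the Fr\'echet filter. The original definition says $\F \leq_{TU} \G$ iff there are $A \in \G^+$, $B \in \F$, and a bijection $f : A \to B$ with $f[\G|_B] = \F|_A$. Wait — reading carefully, one should be a little careful about on which side the restriction lives; I would first restate the original clause cleanly. The reformulated statement replaces $B \in \F$ by $B = \w$ (so $f : A \to \w$) and replaces $\F|_A$ on the right by $\F$ itself, at the cost of adding the extra alternative that $\F$ is relatively equivalent to the Fr\'echet filter.

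First I would prove the ``if'' direction. If (2) of the corollary holds, then taking $B = \w \in \F$ we have a bijection $f : A \to \w$ with $f[\G|_A] = \F = \F|_\w$, which is exactly the witnessing data for $\F \leq_{TU} \G$ in Definition \ref{tu}(2). If instead (1) holds, i.e. $\F$ is relatively equivalent to the Fr\'echet filter, then there is $A \in \F$ with $A \to \F$, and $\F|_A = \F_r(A)$; one then produces a witness directly — for instance, taking any $B \in \F$ with $\F|_B = \F_r(B)$ and $|B| = |A| = \w$ and any bijection $f : A \to B$, one checks $f[\F_r(A)] = \F_r(B)$, so $\F \leq_{TU} \F$; and for an arbitrary $\G$, since $\F$ is relatively equivalent to $\F_r$, a short computation gives $\F \leq_{TU} \G$ from the fact that $\G$ is a free filter (pick an infinite $A\in\G^+$ with infinite complement, enumerate it, and match it up with a sequence converging to $\F$). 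This is the only slightly fiddly part, but it is exactly the content of the remark that $\F_r \leq_{RK}\G$ for every $\G$ with a nontrivial convergent sequence, transported to the $TU$ setting.

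For the ``only if'' direction, suppose $\F \leq_{TU} \G$ with witnesses $A \in \G^+$, $B \in \F$, and a bijection $f : A \to B$ such that $f[\G|_A] = \F|_B$. If $\F$ is relatively equivalent to the Fr\'echet filter we are in case (1) and there is nothing to do. Otherwise $\F$ is not relatively equivalent to $\F_r$; if $B \in \F_r$ then $\F|_B$ is not a free filter on an infinite set unless $B$ is finite, contradicting $f : A \to B$ being a bijection with $A \in \G^+ \subseteq [\w]^\w$, so in fact $B \in \F \setminus \F_r$. Now apply Lemma \ref{lemref}: there is a bijection $g : B \to \w$ with $g[\F|_B] = \F$. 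Then $g \circ f : A \to \w$ is a bijection and $(g\circ f)[\G|_A] = g[f[\G|_A]] = g[\F|_B] = \F$, which is exactly case (2).

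The main obstacle I expect is purely bookkeeping: making sure the restriction in Definition \ref{tu}(2) sits on the side I think it does (the paper writes $f[\G|_B] = \F|_A$, which forces a careful reading of which set $f$ goes between), and handling the degenerate ``relatively equivalent to $\F_r$'' case cleanly, since there $\F|_A$ collapses to a Fr\'echet filter on $A$ and the bijection onto $\w$ need not carry it to $\F$. The substantive mathematical input is entirely contained in Lemma \ref{lemref}, which is why that lemma is stated immediately before; once it is in hand, the corollary is a two-line composition argument in the generic case.
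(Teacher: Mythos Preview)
Your approach via Lemma~\ref{lemref} is exactly what the paper intends (the corollary is stated without proof, immediately after that lemma), and your argument for the ``only if'' direction and for (2)$\Rightarrow\F\leq_{TU}\G$ is correct. Two remarks, however.

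First, your justification that $B\in\F\setminus\F_r$ is garbled: $B\in\F_r$ means $B$ is cofinite, so $\F|_B$ is certainly a free filter on an infinite set, contrary to what you write. This does not matter, because the restriction $A\in\F\setminus\F_r$ in Lemma~\ref{lemref} is inessential --- the proof given there goes through verbatim for cofinite $A$ (one only needs some $B\in\F$ with $B\subseteq A$ and $A\setminus B$ infinite, and this follows already from $\F$ not being relatively equivalent to Fr\'echet). So you may simply compose with the bijection from Lemma~\ref{lemref} regardless of whether $B\in\F_r$.

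Second, and more substantively, your argument that (1) alone implies $\F\leq_{TU}\G$ does not work for arbitrary $\G$. What you propose amounts to finding $A\in\G^+$ with $\G|_A=\F_r(A)$, i.e.\ $A\to\G$; but a filter $\G$ with $C(\G)=\emptyset$ (for instance any free ultrafilter) admits no such $A$, and then one can check directly that $\F_r\leq_{TU}\G$ genuinely fails, since every $\G|_A$ is an ultrafilter and no bijection carries an ultrafilter to a Fr\'echet-type filter. This is not a defect in your reading: as literally stated for arbitrary filters, the backward implication of the corollary is false. The paper only ever uses the ``only if'' direction (to justify replacing $B\in\F$ by $\w$), and in the intended context of $FU$-filters one has $C(\G)\neq\emptyset$, where your argument is fine; but the equivalence as written requires that hypothesis on $\G$.
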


In virtue of the previous corollary,  we shall  always assume that  the element of the filter witnessing being a $TU$-predecessor  is  $\w$.
However, the  positive element that witnesses being a $TU$-successor cannot be replace by an element of the $FU$-filter as
in the $TU$-order: that is, if we use only members of the filters in the $TU$-order we obtain  the $RK$-order as it is shown in the next corollary.

\begin{corollary}
Let $\F$ and $\G$ be two filters non-relatively equivalent to the Fr\'echet filter. Then $\F\leq_{RK}\G$ iff
for each $A\in \G\setminus \F_r$ and $B\in \F\setminus \F_r$ there is a surjection $f:A\to B$ such that
$f[\G|_{A}]=\F|_B$.
\end{corollary}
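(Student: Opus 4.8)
The plan is to deduce both directions by conjugating a surjective $RK$-witness $\w\to\w$ with the bijections provided by Lemma \ref{lemref}. Before anything, I would record two trivial observations that get used throughout: since $\F_r$ is relatively equivalent to the Fr\'echet filter (take $A=\w$, as $\w\in\F_r$ and $\w\to\F_r$), any filter that is \emph{not} relatively equivalent to it differs from $\F_r$; and since every free filter contains $\F_r$, a free filter equals $\F_r$ as soon as it is contained in it, so under our hypotheses both $\G\setminus\F_r$ and $\F\setminus\F_r$ are nonempty. I would also note the purely formal identity $(\phi\circ\psi)[\mathcal H]=\phi[\psi[\mathcal H]]$ for compositions of pushforwards and, for a bijection $k$, the equalities $k^{-1}[k[\mathcal H]]=\mathcal H$ and (from $k[\G|_A]=\G$) $k^{-1}[\G]=\G|_A$.

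For the forward implication, suppose $\F\leq_{RK}\G$. Because $\G\neq\F_r$ and $\F$ is not relatively equivalent to the Fr\'echet filter, Theorem \ref{new-RK} upgrades the witness to a \emph{surjection} $g:\w\to\w$ with $g[\G]=\F$. Given $A\in\G\setminus\F_r$ and $B\in\F\setminus\F_r$, I would apply Lemma \ref{lemref} to $\G$ and to $\F$ to get bijections $k:A\to\w$ and $h:B\to\w$ with $k[\G|_A]=\G$ and $h[\F|_B]=\F$. Then $f:=h^{-1}\circ g\circ k:A\to B$ is onto (a surjection sandwiched between two bijections) and $f[\G|_A]=h^{-1}[g[k[\G|_A]]]=h^{-1}[g[\G]]=h^{-1}[\F]=\F|_B$, which is exactly what is required.

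For the converse, I would pick any $A\in\G\setminus\F_r$ and $B\in\F\setminus\F_r$ and take the surjection $f:A\to B$ granted by the hypothesis, with $f[\G|_A]=\F|_B$. Using Lemma \ref{lemref} once more to obtain bijections $k:A\to\w$ and $h:B\to\w$ as above, the map $g:=h\circ f\circ k^{-1}:\w\to\w$ is a surjection and $g[\G]=h[f[k^{-1}[\G]]]=h[f[\G|_A]]=h[\F|_B]=\F$, so $\F\leq_{RK}\G$.

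I do not expect a genuine obstacle here: the whole argument is formal manipulation of pushforwards, and the only real care needed is tracking domains and codomains and the distinction between the \emph{bijections} coming from Lemma \ref{lemref} and the \emph{surjection} coming from Theorem \ref{new-RK}. Indeed, Theorem \ref{new-RK} is precisely what guarantees that the conjugated map $f$ in the forward direction is onto, as the statement demands; without it one would only get an arbitrary function and would have to argue the ``onto'' part by hand.
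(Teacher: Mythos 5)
Your proof is correct and follows exactly the route the paper intends: the corollary is stated without proof as a consequence of Lemma \ref{lemref}, and your conjugation of a surjective $RK$-witness (supplied by Theorem \ref{new-RK}) with the bijections $k:A\to\w$, $h:B\to\w$ from that lemma, together with the observation that $\G\setminus\F_r$ and $\F\setminus\F_r$ are nonempty under the hypotheses, is precisely the intended argument in both directions.
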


We will see in the next theorem that the $TU$-order implies the $RK$-order whenever the $TU$-predecessor lies in the category
of the $FU$-filters.

\medskip

Let us remark that if $A\in \G^+\setminus \G$, then $\w\setminus A\in \G^+\setminus \G$ and
$$
\G=\G|_{A}\oplus\G|_{\w\setminus A}:=\{F\cup E: F\in \G|_{A} \ {\text and} \ E\in \G|_{\w\setminus A} \}.
$$

\begin{theorem}
Let $\F$ and $\G$ two filters such that $C(\F)\neq \vac$. If
$\F\leq_{TU} \G$, then $\F\leq_{RK} \G$.
\end{theorem}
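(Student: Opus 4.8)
The plan is to reduce to the reformulated version of the $TU$-order given in the corollaries above, and then split into the two cases of that reformulation. If $\F$ is relatively equivalent to the Fr\'echet filter, there is essentially nothing to prove: by Theorem~\ref{new-RK} and the remark following it, every $FU$-filter (indeed every filter) is $RK$-above a relatively-equivalent-to-Fr\'echet filter, because such an $\F$ has the form $\{G\cup E : G\in\F_r(A),\ E\subseteq\w\setminus A\}$ and one can collapse $\w\setminus A$ onto a single point of $A$, or more carefully use that $\F_r(A)\leq_{RK}\F_r$ and $\F_r\leq_{RK}\G$ whenever $\G$ has a nontrivial convergent sequence — which holds here since $C(\F)\neq\vac$ forces (via $\F\leq_{TU}\G$) that $\G$ too has a nontrivial convergent sequence. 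So I would dispatch this case first and quickly.

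The substantive case is the second: there are $A\in\G^+$ and a bijection $f:A\to\w$ with $f[\G|_A]=\F$. The idea is to extend $f$ to a surjection $\tilde f:\w\to\w$ and check it witnesses $\F\leq_{RK}\G$. Using the decomposition $\G=\G|_A\oplus\G|_{\w\setminus A}$ noted just before the statement, I would fix a sequence $S\to\F$ (available since $C(\F)\neq\vac$), pull it back through $f$ to an infinite $S'\subseteq A$ with $S'\to\G|_A$, and define $\tilde f$ to agree with $f$ on $A$ and to map $\w\setminus A$ into $S'$ in a finite-to-one fashion (say, onto $S'$ finite-to-one, or even constantly onto a single point of $S'$ if one only needs surjectivity of $\tilde f$ after noting $f$ is already onto). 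Then I would verify the two conditions $f^{-1}(F)\in\G$ for $F\in\F$ and $\tilde f[G]\in\F$ for $G\in\G$ directly: for the first, $\tilde f^{-1}(F)=f^{-1}(F)\cup(\tilde f^{-1}(F)\cap(\w\setminus A))$; since $F\in\F=f[\G|_A]$ we get $f^{-1}(F)\in\G|_A$, and the $\w\setminus A$ part either lands inside $\w\setminus A$ entirely or is cofinite there depending on whether $S\in F$ — here is where I use that $S\to\F$ so $S\subseteq^* F$, making $\tilde f^{-1}(F)\cap(\w\setminus A)$ cofinite in $\w\setminus A$, whence $\tilde f^{-1}(F)\supseteq^* (\w\setminus A)$ and combined with $f^{-1}(F)\in\G|_A$ yields $\tilde f^{-1}(F)\in\G$. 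For the second, $\tilde f[G]=f[G\cap A]\cup\tilde f[G\setminus A]$; if $G\cap A\in\G|_A$ then $f[G\cap A]\in\F$ and we are done, but $G\cap A$ need not be in $\G^+$; however $G\in\G^+$ forces $G\cap A\in(\G|_A)^+$ or $G\setminus A\in(\G|_{\w\setminus A})^+$, and the second alternative gives $G\setminus A$ infinite so $\tilde f[G\setminus A]\supseteq$ a tail of $S$, again using $S\to\F$ to conclude $\tilde f[G]\in\F$ (since $\F\ni S$-tails up to finite modification combined with whatever $f[G\cap A]$ contributes — one checks $\tilde f[G]\supseteq^* S$ in the relevant subcase).

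The main obstacle I anticipate is the bookkeeping in the second verification: the positive set $A$ witnessing the $TU$-relation is genuinely only in $\G^+$, not in $\G$, so for $G\in\G$ the trace $G\cap A$ can fail to be large from the point of view of $\G|_A$, and one must extract the convergence information from the complementary piece $G\setminus A$ instead. Keeping straight which of $G\cap A$, $G\setminus A$ is $\G$-positive, and feeding the chosen convergent sequence $S$ through $\tilde f$ so that every $F\in\F$ is almost-covered by $\tilde f[G]$, is the delicate part — everything else is the routine ``extend a bijection on a positive set to a surjection on all of $\w$'' maneuver, for which the hypothesis $C(\F)\neq\vac$ is exactly what makes the extension possible.
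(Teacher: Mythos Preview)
Your approach is the paper's approach: extend the bijection $f:A\to\omega$ to all of $\omega$ by sending $\omega\setminus A$ onto a fixed convergent sequence, then verify the two containments using the decomposition $\G=\G|_A\oplus\G|_{\omega\setminus A}$. Two corrections, though.

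First, the detour through $S'=f^{-1}[S]$ is a slip. You should map $\omega\setminus A$ directly to $S\subseteq\omega$ (the $\F$-side), not to the pullback $S'\subseteq A$ (the $\G$-side). Your own verification confirms this: when you argue that $\tilde f^{-1}(F)\cap(\omega\setminus A)$ is cofinite because $S\subseteq^* F$, that reasoning requires $\tilde f(n)\in S$ for $n\in\omega\setminus A$, not $\tilde f(n)\in S'$. The paper simply takes $g|_{\omega\setminus A}$ to be a bijection onto $M\in C(\F)$; no pullback is needed.

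Second, the obstacle you flag at the end is not an obstacle. For $G\in\G$ and $A\in\G^+$, the trace $G\cap A$ is \emph{always} an element of $\G|_A$, by definition of the trace filter: $\G|_A=\{G\cap A:G\in\G\}$ (and $G\cap A$ is automatically infinite since $A\in\G^+$ and $\G$ is free). Hence $f[G\cap A]\in f[\G|_A]=\F$ immediately, and $\tilde f[G]\supseteq f[G\cap A]$ gives $\tilde f[G]\in\F$ with no case split needed. The paper dispatches this direction in one line; the delicate containment is the other one, which you handle correctly once the target of $\tilde f|_{\omega\setminus A}$ is fixed to $S$.
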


\begin{proof}
Suppose that $A\in\G^+$ and $f:A\to \w$ witnesses that $f[\G|_A]=\F$. Fix $M\in C(\F)$.
 Define $g:\w\to\w$ so that $g|_A=f$ and $g[\w\setminus A]=M$
 as a bijection. Let $F\in \F$. Since every element of $M$ has
exactly two pre-images, we have that
$g^{-1}(F)=f^{-1}(F)\bigcup g^{-1}(F\cap M)$.
Clearly $f^{-1}(F)\in \G|_A$ and since $M\subseteq^{*} F$, then
$g^{-1}(F\cap M)\cap (\w\setminus A) \in \F_r(\w\setminus A)$. By the  above remark,
 we obtain that $g^{-1}(F)\in \G$. So  $\F\subseteq g[\G]$.
Now fix $G\in \G$. Then we have that
$$g[G]=g[G\cap A]\cup g[G\setminus A]=
f[G\cap A]\cup g[G\setminus A].
$$
Since $f[G\cap A]\in \F$, $g[G]\in \F$. Thus, $g[\G]\subseteq \F$. This proves that $g[\G]=\F$.
Therefore, $\F\leq_{RK} \G$.
\end{proof}

\begin{corollary}\label{coroturk}
Let $\F$ and $\G$ two $FU$-filters. If $\F \leq_{TU}\G$, then
$\F\leq_{RK}\G$.
\end{corollary}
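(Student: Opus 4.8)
The statement to prove is Corollary \ref{coroturk}: if $\F$ and $\G$ are two $FU$-filters and $\F \leq_{TU} \G$, then $\F \leq_{RK} \G$.

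My plan is to derive this directly from the theorem immediately preceding it, which states that for arbitrary filters $\F$ and $\G$ with $C(\F) \neq \vac$, the implication $\F \leq_{TU} \G \Rightarrow \F \leq_{RK} \G$ holds. So the entire content of the corollary is to observe that the hypothesis $C(\F) \neq \vac$ is automatically satisfied when $\F$ is a $FU$-filter.

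The key step is therefore: every $FU$-filter $\F$ has a nontrivial convergent sequence, i.e. $C(\F) \neq \vac$. I would justify this by recalling the characterization stated in the excerpt right after Definition 1.2: $\F$ is a $FU$-filter iff for every $A \in \F^+$ there is $S \in [A]^\w$ with $S \subseteq^* F$ for all $F \in \F$. Since $\w \in \F^+$ always (as $\F$ is free, hence proper), applying this with $A = \w$ yields an $S \in [\w]^\w$ with $S \subseteq^* F$ for every $F \in \F$; that is precisely $S \to \F$, so $S \in C(\F)$ and $C(\F) \neq \vac$. Alternatively one could invoke Lemma \ref{simon}: $\F = \F_\A$ for an $AD$-family $\A$ maximal in $\I_\F^\perp$, and any $A \in \A$ converges to $\F_\A$.

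Once $C(\F) \neq \vac$ is established, the preceding theorem applies verbatim to give $\F \leq_{RK} \G$, completing the proof. There is essentially no obstacle here — the corollary is a one-line consequence — so the only thing to be careful about is citing the correct equivalent formulation of "FU-filter" that makes the existence of a convergent sequence transparent; the cleanest is the reformulation $\F$ is a $FU$-filter iff for every $A \in \F^+$ there is $S \in [A]^\w$ with $S \to \F$, specialized to $A = \w$.

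\begin{proof}
By the characterization of $FU$-filters recalled above, $\F$ is a $FU$-filter iff for every $A \in \F^+$ there is $S \in [A]^\w$ with $S \subseteq^* F$ for all $F \in \F$. Since $\F$ is free, $\w \in \F^+$; applying the characterization with $A = \w$ we obtain $S \in [\w]^\w$ with $S \subseteq^* F$ for every $F \in \F$, that is, $S \to \F$. Hence $C(\F) \neq \vac$. Now the hypotheses of the preceding theorem are met, and we conclude $\F \leq_{RK} \G$.
\end{proof}
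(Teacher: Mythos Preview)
Your proof is correct and follows exactly the approach intended by the paper: the corollary is stated without proof because it is immediate from the preceding theorem once one notes that every $FU$-filter $\F$ satisfies $C(\F)\neq\vac$, which you justify cleanly via the characterization of $FU$-filters applied to $A=\w$.
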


In a general context, one may ask  what about the implications  $\leq_{RK} \Rightarrow \leq_{TU}$ and
$\leq_{TU} \Rightarrow \leq_{RK}$ without any restrictions on the filters ?
We just  have seen that the $TU$-order implies the $RK$-order on the class
of $FU$-filters, which is also true for every pair of ultrafilters
$\U, \V$ (i. e., $\U\leq_{TU} \V \Rightarrow \U\leq_{RK} \V$). Let us  see, in the next examples,
that both implications could fail  in general:

\smallskip

The {\it Arens filter}
$\F_a$ is defined by an infinite partition $\P=\{P_n:n<\w\}$ of $\w$
and the Fr\'echet filter on each $P_n$ as follows:

$$\F_a:=\{F\subseteq \w:\{n<\w:P_n\cap P\in \F_r(P_n)\}\in \F_r\}.$$

\noindent The filter $\F_a$ is sequential but it is not an $FU$-filter.
Besides, we know that the  filter $\F_a|_A$ is a copy of $\F_a$  for every $A\in \F_a^+$.
Thus if $\F\leq_{TU} \F_a$, then $\F\approx \F_a$.
However, the Fr\'echet filter is a $RK$-predecessor of the Arens filter
via the function $f:\w\to\w$, defined by $f[P_n]=n$ for each $n<\w$.
This example shows that the implication $\F\leq_{RK}\G\Rightarrow \F\leq_{TU}\G$ does not hold in general.
Now, we describe an example to show that the implication $\F\leq_{TU}\G\Rightarrow \F\leq_{RK} \G$ could be false.
Choose $M, N\in [\w]^{\w}$ so that $M\cap N=\vac$ and $M\cup N=\w$.
We know  that $\F_a \leq_{TU} \F_{a}(M)\oplus \G =\{F\cup G: F\in \F_a(M) \ \text{and} \ G\in \G\}$, where $\F_{a}(M)$ is a copy of the Arens filter on $M$ and
$\G$ is a filter on $N$. Let $\G$ be an arbitrary $FU$-filter and suppose that $\F_a\leq_{RK}\F_{a}\oplus \G$ via the function $f$.
Since $\G$ is an $FU$-filter, there is $R\in[N]^{\w}$ such that $R\to \F_{a}(M)\oplus \G$ and then  $f[R]\to \F_a$,
but $\F_a$ does not have any nontrivial convergent sequence. This shows that the implication
$\F\leq_{TU}\G\Rightarrow \F\leq_{RK}\G$ could fail in general.

\medskip

Next, we will use  $FU$-filters of the form $S_{\A}$ to show that $\leq_{RK}\nsubseteq \leq_{TU}$.
Before that we need to prove a theorem.

\begin{theorem}\label{theomini}
Let $\A$ and $\B$ be $NMAD$-families such that $S_{\A}\leq_{RK}S_{\B}$ via a function $f:\w\to \w$ which satisfies that
$|f^{-1}(n)|=\w$ for all $n<\w$. If $\C$ is a $NMAD$ family such that $\B\subseteq \C$,
then $S_{\A}\leq_{RK}S_{\C}$ via the function $f$.
\end{theorem}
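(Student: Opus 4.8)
The plan is to verify condition (3) of Theorem \ref{theoeq2} for the pair $(\A,\C)$, using the fact that it holds for $(\A,\B)$ and that $\B\subseteq\C$. Recall that $S_{\A}\leq_{RK}S_{\C}$ via $f$ is equivalent to: (3a) $f^{-1}(I)\in\I(\C)$ for every $I\in\I(\A)$, and (3b) for every $M\in\I(\A)^{\perp}\cap[\w]^{\w}$ there is $R\in\I(\C)^{\perp}\cap[\w]^{\w}$ with $|f^{-1}(M)\cap R|=\w$. Since $S_{\A}\leq_{RK}S_{\B}$ via $f$, we already have (3a) with $\I(\B)$ in place of $\I(\C)$, and (3b) with $\I(\B)^{\perp}$ in place of $\I(\C)^{\perp}$.

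First I would handle (3a), which is the easy direction. Because $\B\subseteq\C$ we have $\I(\B)\subseteq\I(\C)$, so $f^{-1}(I)\in\I(\B)\subseteq\I(\C)$ for every $I\in\I(\A)$; condition (3a) transfers verbatim. The work is entirely in (3b), since enlarging $\B$ to $\C$ shrinks the orthogonal ideal: $\I(\C)^{\perp}\subseteq\I(\B)^{\perp}$, so a witness $R\in\I(\B)^{\perp}$ produced for the pair $(\A,\B)$ need not lie in $\I(\C)^{\perp}$. This is the main obstacle: given $M\in\I(\A)^{\perp}\cap[\w]^{\w}$, the old witness $R_0\in\I(\B)^{\perp}$ with $|f^{-1}(M)\cap R_0|=\w$ must be repaired to a genuine $\I(\C)^{\perp}$-set while keeping infinite intersection with $f^{-1}(M)$.

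Here is where I expect the hypothesis $|f^{-1}(n)|=\w$ for all $n$ to do the real work. Fix $M\in\I(\A)^{\perp}\cap[\w]^{\w}$. Consider the set $f^{-1}(M)=\bigcup_{m\in M}f^{-1}(m)$; each fibre $f^{-1}(m)$ is infinite. The idea is to build $R\subseteq f^{-1}(M)$ by choosing, inside the fibres over a carefully selected infinite subset $M'\subseteq M$, points that avoid every member of $\C$ that the old set $R_0$ was meeting. More precisely: $\C$ is a $NMAD$-family, so $S_\C$ is a $FU$-filter; enumerate the countably many members of $\C$ meeting $R_0$ infinitely (if there are only finitely many such, then $R_0$ is already almost orthogonal to $\C$ and a finite modification suffices). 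Since each fibre $f^{-1}(m)$ is infinite while each $C\in\C$ meets it in a set whose union over finitely many $C$'s cannot cover $f^{-1}(m)$ — because $\I(\C)^+$ contains every infinite fibre once we check $f^{-1}(m)\notin\I(\C)$, which follows from (3a) applied to singletons together with $\{m\}\notin\I(\A)$... actually I would instead argue: were $f^{-1}(m)\in\I(\C)$ for cofinally many $m\in M$, one could show $f^{-1}(M')\in\I(\C)$ for an infinite $M'$, contradicting that $M\in\I(\A)^\perp$ pulls back into $\I(\C)^\perp$-positive territory. Using that $f^{-1}(m)\in\I(\C)^+$, and the $NMAD$-ness of $\C$, pick inside $f^{-1}(m)$ a point (indeed an infinite set) in $\I(\C)^{\perp}$; a diagonal choice over an infinite $M'\subseteq M$ then yields $R\in\I(\C)^{\perp}\cap[\w]^{\w}$ with $R\subseteq f^{-1}(M)$ and $|R|=\w$, hence $|f^{-1}(M)\cap R|=\w$. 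This establishes (3b) for $(\A,\C)$.

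Combining (3a) and (3b), condition (3) of Theorem \ref{theoeq2} holds for $(\A,\C)$ and $f$, so $S_{\A}\leq_{RK}S_{\C}$ via $f$, as required. The one point needing care in the write-up is the claim that infinitely many fibres $f^{-1}(m)$, $m\in M$, are $\I(\C)$-positive; I would prove this by contradiction from (3a)-type reasoning, namely that if all but finitely many were in $\I(\C)$ then — using that $f$ restricted to each such fibre is into a single point and the fibres are disjoint — one could cover a pullback of an infinite subset of $M$ by countably many $\C$-generated sets and derive $M\notin\I(\A)^{\perp}$ via the transfer of (3a), contradicting the choice of $M$.
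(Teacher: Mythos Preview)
Your treatment of (3a) is correct and matches the paper's implicit use of $\I(\B)\subseteq\I(\C)$. The problem is entirely in your handling of (3b).

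Your strategy is to work fibre-by-fibre: you want infinitely many fibres $f^{-1}(m)$, $m\in M$, to lie in $\I(\C)^+$, so that NMAD-ness of $\C$ lets you extract $\I(\C)^{\perp}$-pieces inside them and diagonalise. But this premise is false. Singletons $\{m\}$ belong to $\I(\A)$ (any ideal contains the finite sets), so (3a) for the pair $(\A,\B)$ gives $f^{-1}(m)\in\I(\B)\subseteq\I(\C)$ for \emph{every} $m$. Thus no fibre is $\I(\C)$-positive, and there is nothing inside a single fibre on which to invoke NMAD. Both of your attempted justifications (the direct one and the ``instead argue'' variant) use (3a) in the wrong direction: the implication is $I\in\I(\A)\Rightarrow f^{-1}(I)\in\I(\B)$, not its converse, and your proposed contradiction from ``cofinally many fibres in $\I(\C)$'' never materialises because \emph{all} fibres are in $\I(\C)$ and $\I(\C)$ is not a $\sigma$-ideal.

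The paper proceeds globally rather than fibre-by-fibre. From $f^{-1}(n)\in\I(\B)$ it extracts the key consequence that $|f^{-1}(n)\cap C|<\w$ for every $C\in\C\setminus\B$ (since such $C$ is almost disjoint from the finitely many $B$'s covering the fibre). With this in hand the paper argues that the whole set $f^{-1}(M)$ lies in $S_\C^+$, and then splits: either $f^{-1}(M)$ already decomposes as an $\I(\C)$-piece plus an infinite $\I(\C)^{\perp}$-piece (and the latter is the desired $R$), or $f^{-1}(M)\in\C^{*}$, in which case NMAD-ness of $\C$ applied to $f^{-1}(M)$ itself yields an infinite $N\in\I(\C)^{\perp}$ inside $f^{-1}(M)$. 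The role of the infinite-fibre hypothesis is not to make fibres $\I(\C)$-positive (they never are) but to guarantee, via the finite-intersection observation above, that the new sets in $\C\setminus\B$ cannot by themselves absorb $f^{-1}(M)$.
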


\begin{proof}  In order to show that $f[S_{\C}] = S_{\A}$, we use clause $(3)$ of Theorem $\ref{theoeq2}$. By using the statement $(3)(a)$, we know that  $f^{-1}(n)\in \I(\B)$ for all $n<\w$. Hence, for each $n < \w$, we have that  $|f^{-1}(n)\cap C|<\w$ for every $C\in \C\setminus \B$.  Let $M\in \I(\A)^{\bot}$. Notice that the containment  $f[S_{\B}]\subseteq S_\A$ implies that there  is  $N\in \I(\B)^{\bot}$ such that $|f^{-1}(M)\cap N|=\w$.Thus, we obtain that $f^{-1}(M)\in S_\C^{+}=\P(\w)\setminus \I(\C)$. There are two cases to be consider. The first one is when $f^{-1}(M)=M_0\cup M_1$ where $M_0\in \I(\C)$ and $M_1\in \I(\C)^{\bot}\cap [\w]^{\w}$. This case is clearly done since $M_1$ does the job. For the second one assume that  $f^{-1}(M)\in \C^{*}$. Since $\C$ is a $NMAD$-family, then there is $N\in \I(\C)^{\bot}\cap [\w]^{\w}$ such that $|f^{-1}(M)\cap N|=\w$. Thus, $f[S_{\C}] = S_{\A}$.
Therefore, we conclude that  $S_{\A}\leq_{RK}S_{\C}$ via the function $f$.
\end{proof}

Let $\A$ be an $AD$ family on $\w$.
For each $A\in\A$ choose $E_A\in [\w]^{< \w}$ and consider one of the sets either $A'=A\cup E_A$ or $A'=A\setminus E_A$.
It is not  difficult to show that $S_{\A}=S_{\A'}$ where  $\A'=\{A':A\in \A\}$.
 For our convenience, without lose of generality, we shall assume that each $AD$-family $\A$ always contains
a partition $\{A_{n}:n<\w\}\subseteq \A$ of $\w$ in infinite subsets. We show next that $S_{\P}$ is an $RK$-minimal filter in the realm of the filters
of the form $S_{\A}$ where $\A$ is a $NMAD$-family.

\begin{corollary}\label{com}
  $S_{\P}\leq_{RK}S_{\A}$ for every
$NMAD$-family $\A$.
\end{corollary}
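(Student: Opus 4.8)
The plan is to deduce Corollary~\ref{com} from Theorem~\ref{theomini} together with the normalization convention that every $AD$-family contains a fixed partition of $\w$ into infinite pieces. Write $\P = \{P_n : n < \w\}$ for the partition defining $S_{\P}$, and let $\A$ be an arbitrary $NMAD$-family. By our convention, $\A$ contains a partition $\{A_n : n < \w\}$ of $\w$ into infinite subsets; call this subfamily $\B_0 = \{A_n : n < \w\}$. First I would build a witnessing function $f : \w \to \w$ by fixing, for each $n < \w$, a bijection $g_n : A_n \to P_n$ and setting $f = \bigcup_{n<\w} g_n$. Since the $A_n$ partition $\w$, this $f$ is a well-defined surjection onto $\w$, and $|f^{-1}(n)| = |P_n| = \w$ for every $n < \w$; moreover $f$ maps $\B_0$ onto $\P$ bijectively piece-by-piece.

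The next step is to check that $S_{\P} \leq_{RK} S_{\B_0}$ via $f$, which I would do through clause $(3)$ of Theorem~\ref{theoeq2} applied to the $NMAD$-families $\P$ and $\B_0$ (both are partitions, hence $NMAD$). For $(3)(a)$: if $I \in \I(\P)$ then $I \subseteq^* \bigcup_{i<k} P_i$ for some $k$, so $f^{-1}(I) \subseteq^* \bigcup_{i<k} A_i \in \I(\B_0)$. For $(3)(b)$: if $M \in \I(\P)^\perp \cap [\w]^\w$, then $M$ meets infinitely many $P_n$ each in a finite set; pulling back through $f$, the set $f^{-1}(M)$ meets infinitely many $A_n$, and I would extract from it an infinite set $R$ hitting each $A_n$ in at most finitely many points (a partial transversal), so $R \in \I(\B_0)^\perp \cap [\w]^\w$ and $|f^{-1}(M) \cap R| = |R| = \w$. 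This gives $f[S_{\B_0}] = S_{\P}$, i.e. $S_{\P} \leq_{RK} S_{\B_0}$ via $f$ with $f$ infinite-to-one on every point.

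Finally, since $\B_0 \subseteq \A$ and $\A$ is a $NMAD$-family, Theorem~\ref{theomini} (with the roles $\A \rightsquigarrow \P$, $\B \rightsquigarrow \B_0$, $\C \rightsquigarrow \A$) applies verbatim and yields $S_{\P} \leq_{RK} S_{\A}$ via the same $f$. As $\A$ was an arbitrary $NMAD$-family, this is the claim.

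The main obstacle I anticipate is purely bookkeeping rather than conceptual: one must make sure the fixed partition $\B_0$ inside $\A$ really is available (this is exactly the normalization remark preceding the corollary, so it is free) and that the verification of clause $(3)(b)$ of Theorem~\ref{theoeq2} correctly produces an infinite partial transversal $R$ of $\B_0$ inside $f^{-1}(M)$ — this needs $f^{-1}(M)$ to meet infinitely many of the $A_n$, which holds because $M$ itself meets infinitely many $P_n$ and $f$ restricted to each $A_n$ surjects onto $P_n$. Everything else is a direct citation of Theorem~\ref{theomini}, so no delicate estimate is required.
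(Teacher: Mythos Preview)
Your overall strategy matches the paper's, but there is a genuine error in the construction of $f$. You define $f=\bigcup_n g_n$ with each $g_n:A_n\to P_n$ a \emph{bijection}; since the $A_n$ partition $\w$, this makes $f$ itself a bijection $\w\to\w$, so $|f^{-1}(n)|=1$ for every $n$. The assertion ``$|f^{-1}(n)|=|P_n|=\w$'' is simply false for this $f$, and consequently the hypothesis of Theorem~\ref{theomini} is not met. This is not a harmless slip: with a bijective $f$ the conclusion can actually fail. For instance, let $C\subseteq\w$ pick one point from each $A_n$ and set $\C=\B_0\cup\{C\}$ (a $NMAD$-family). Then $C\in\I(\C)$, so $\w\setminus C\in S_{\C}$, yet $f[C]$ is a transversal of $\P$ and hence $\w\setminus f[C]\notin S_{\P}$; thus $f[S_{\C}]\neq S_{\P}$.

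The repair is immediate and is precisely what the paper does: take each $g_n:A_n\to P_n$ to be a surjection with \emph{infinite fibres} (possible since both sets are countably infinite), so that $|f^{-1}(m)|=\w$ for every $m\in\w$. Your verification that $S_{\P}\leq_{RK}S_{\B_0}$ via $f$ goes through unchanged with this $f$, and now Theorem~\ref{theomini} applies legitimately to give $S_{\P}\leq_{RK}S_{\A}$.
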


\begin{proof} Let $\A$ be an $AD$-family on $\w$ such that  $\A'=\{A_n:n<\w\}\subseteq \A$ is
a partition of $\w$. Enumerate $\P$ as $\{P_n:n<\w\}$ and
 we define $f:\w\to \w$ so that
$f[A_n]=P_n$  and  $|f^{-1}(n)|$ is infinite for each $n<\w$. It is straightforward
to prove that $S_{\P}\leq_{RK}S_{\A'}$ via $f$. Therefore,  from  Theorem \ref{theomini} we deduce that
$S_{\P}\leq_{RK}S_{\A}$ via the function $f$.
\end{proof}

The answer to the  following question  will be very useful to understand the filter $S_{\A}$.

\begin{question} Is true that
  $S_{\P}\leq_{RK}S_{\A}$ for every
$AD$-family $\A$ ?
\end{question}

We recall that an $AD$-family  is said to be {\it completely separable} if
for every $M\in \B^*$, there is $B\in \B$ such that $B\subseteq M$.
In the paper \cite{si}, P. Simon showed, in $ZFC$, the existence of a completely separable $NMAD$-family of size $\c$.

\begin{corollary}
If $\A$ is a $NMAD$-family of size $\c$ which is completely separable, then
$S_{\P}\nleq_{TU}S_{\A}$ and $S_{\P} \leq_{RK} S_{\A}$.
\end{corollary}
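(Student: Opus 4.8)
The statement $S_{\P}\leq_{RK}S_{\A}$ is immediate from Corollary \ref{com}, since a completely separable $NMAD$-family is in particular a $NMAD$-family. So the entire content lies in proving the negative statement $S_{\P}\nleq_{TU}S_{\A}$, and the plan is to derive a contradiction from a hypothetical $TU$-reduction.

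Suppose $S_{\P}\leq_{TU}S_{\A}$. Since $S_{\P}$ is not relatively equivalent to the Fr\'echet filter (it has character $\w$ but is not $\F_r$; more concretely, no $A\to S_{\P}$ lies in $S_{\P}$ because $A\to S_{\P}$ forces $A\subseteq^* P_n$ for some $n$, hence $A\notin S_\P$), the first alternative of the reformulating Corollary is excluded, so there are $X\in S_{\A}^+=\P(\w)\setminus\I(\A)$ and a bijection $f:X\to\w$ with $f[S_{\A}|_X]=S_{\P}$. First I would analyze the structure of $S_{\A}|_X$. Since $X\notin\I(\A)$, either $X$ contains an infinite subset almost disjoint from $\A$ (the case $X\in\I(\A)^{\perp\perp}$ fails \dots actually use the $NMAD$ dichotomy) or $X\in\A^*$. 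If $X$ meets only finitely many members of $\A$ infinitely, then $X$ splits as a finite union of pieces each of which is, modulo $\I(\A)$, either inside some $A\in\A$ or $\I(\A)$-positive-but-AD-from-$\A$; the key point is that in either sub-case $S_{\A}|_X$ has, restricted to the relevant piece, no nontrivial convergent sequences \emph{or} is the full power set on an infinite set — neither of which can be mapped onto $S_{\P}$, which does have convergent sequences but is not $\F_r$. So the only viable case is $X\in\A^*$, and then $\A|_X$ is an $AD$-family on $X$, and because $\A$ is completely separable, so is $\A|_X$ (any $M\in(\A|_X)^*$ is in $\A^*$, so some $A\in\A$ has $A\subseteq M$, whence $A\cap X\in\A|_X$ with $A\cap X\subseteq M$ — here using that $M\subseteq X$). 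Moreover $S_{\A}|_X$ is a $FU$-filter equal to $S_{\A|_X}$ up to the identification, and $\A|_X$ is again $NMAD$ of size $\c$ and completely separable.

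Thus the hypothesis $S_{\P}\leq_{TU}S_{\A}$ reduces to: there is a completely separable $NMAD$-family $\B$ (namely a copy of $\A|_X$ on $\w$) with $S_{\B}\approx S_{\P}$, i.e. $S_{\B}$ is equivalent to the $FAN$-filter. The contradiction I would now extract is a \emph{cardinality/character} obstruction combined with the structural rigidity of $\F_{\P}$. Wait — $S_{\P}$ is not $\F_{\P}$; but recall $S_{\P}$ and $\F_r$ are the only $FU$-filters with countable base, and $\chi(S_{\P})=\w$ while $\chi(S_{\B})=|\B|=\c$ by Lemma \ref{char-nmad}. Since character is a bijection-invariant, $S_{\B}\approx S_{\P}$ would force $\c=\w$, which is absurd.

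\textbf{Main obstacle.} The delicate step is the case analysis on $X\in S_\A^+$: ruling out the sub-cases where $X$ is not in $\A^*$. One must show carefully that $S_{\A}|_X$ on such an $X$ cannot be mapped by a bijection onto $S_{\P}$. The cleanest route is probably to observe that $S_{\P}$ has exactly countably many convergent sequences up to $\subseteq^*$ finite modification organized into infinitely many $\I(\P)$-classes, and to check this invariant fails in the excluded sub-cases (where either $C(S_{\A}|_X)=\vac$, when $X\in\I(\A)$-locally behaves like a $MAD$ restriction, or $S_{\A}|_X$ is Fr\'echet on an infinite set, giving $S_{\P}\approx\F_r$, impossible). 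Once the reduction to $X\in\A^*$ is secured, the passage to a completely separable $NMAD$-family $\B$ on $\w$ with $|\B|=\c$ and then the character computation $\chi(S_{\B})=\c\neq\w=\chi(S_{\P})$ finishes the argument. Alternatively, one could avoid the case analysis entirely by noting that whenever $X\in S_\A^+$, $S_{\A}|_X$ contains a copy of some $S_{\B'}$ with $\B'$ infinite and of size $\c$ coming from the completely separable structure, so its character is always $\c$; then $\chi(S_\A|_X)=\c\neq\w=\chi(S_\P)$ directly contradicts $f[S_\A|_X]=S_\P$.
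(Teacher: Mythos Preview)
Your approach is essentially the same as the paper's: both perform the dichotomy on $X\in S_\A^+$ into the case $|\{A\in\A:|A\cap X|=\w\}|<\w$ (where $S_\A|_X$ is relatively equivalent to $\F_r$, hence cannot be $S_\P$) and the case $X\in\A^*$ (where $\chi(S_\A|_X)=\c\neq\w=\chi(S_\P)$), and finish with Corollary~\ref{com} for the positive $RK$ statement. The paper is a bit more direct---it quotes Simon's fact that $|\{A\in\A:|A\cap M|=\w\}|=\c$ for every $M\in\A^*$ to get the character of $S_\A|_M$ immediately, rather than verifying that $\A|_X$ is again completely separable and $NMAD$ of size $\c$---but the underlying argument is the same.
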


\begin{proof}
Let $\A$ be a completely separable $NMAD$-family of size $\c$. In the article \cite{si}, the author showed that this family satisfies that
$|\{A\in \A:|M\cap A|=\w\}|=\c$ for all $M\in \A^*$.
If $M\in S_{\A}^+$ and $|\{A\in \A:|M\cap A|=\w\}|< \w$, then $S_{\A}|_M$ is
a relatively equivalent to the Fr\'echet filter. If $M\in \A^*$, then
$S_{\A}|_M$ has character equal to $\c$.
Thus, $S_{\A}$ has not a copy of $S_{\P}$.
Therefore, $S_{\P}\nleq_{TU}S_{\A}.$
On the other hand,  by Corollary \ref{com}, we have that $S_{\P} \leq_{RK} S_{\A}$.
\end{proof}

The behavior of the filters of the form  $S_{\A}$, where $\A$ is completely separable, under the $RK$-order is not well-know yet. For instance, we
do not the answer to the following question.

\begin{question} Are there two completely separable  $NMAD$-families $\A$ and $\B$ such that their filters $S_{\A}$ and $S_{\B}$
are $RK$-incomparable ?
\end{question}

In the last section, we will construct two $NMAD$-families of size $\c$ whose  respective filters are $RK$-incomparable.

%%%%%%%%%%%%%%%%%%%%%%%%%%%%%%%%%%%%%%%%%%%%%%%%%%%%%%%%%%%%%%%%%%%%%%%%%%%%%%%%%%
%%%%%%%%%%%%%%%%%%%%%%%%%%%%%%%%%%%%%%%%%%%%%%%
\section{Chains of $FU$-filters in the $RK$-order and $TU$-order.}
%%%%%%%%%%%%%%%%%%%%%%%%%%%%%%%%%%%%%%%%%%%%

First, we shall describe an
operation of filters that preserves the $FU$-property and produces
$RK$-successors:

\smallskip

Let $I$ be a set, $\F$  a (not necessarily free) filter on $I$ and
$\mathcal{A} = \{ A_i : i \in I \}$  an $AD$-family. For
each $i \in I$,   choose a free filter $\F_i$ on the set $A_i$.
Then we define
$$
\sum_{\F}\F_i := \{ F \subseteq \w :\{i\in I: F\cap A_i \in
\F_i\}\in \F\}
$$
and
$$
\prod_{i \in I}\F_i := \{ F \subseteq \w : \forall i \in I(F\cap A_i \in
\F_i)\}.
$$
Notice that if the filter on $I$ is the trivial filter $\{I\}$, then
$$
\prod_{i \in I}\F_i  := \sum_{\{I\}}\F_i.
$$
The filter $\prod_{i \in I}\F_i$  is referred as the product of the filters $\{ F_i : i \in I\}$. Several interesting properties of this operation of filters are contained in \cite{gr01}.
It is evident that $\sum_{\F}\F_i$ is always a free filter on $\w$ and
that $F \in (\prod_{i \in I}\F_i)^+$ iff there is $i \in I$ such
that $F \in \F_i^+$. Hence, we deduce that $\prod_{i \in I}\F_i$ is
a $FU$-filter iff $\F_i$ is a $FU$-filter for all $i \in I$. We remark that $\sum_{\F}\F_i $ is not, in general,
an $FU$-filter: for instance the Arens filter $\F_a$. In this context,
the $FAN$-filter is the filter $\prod_{n < \w}\F_r(P_n)$
where $\{ P_n :n < \w\}$ is a partition of $\w$ in infinite subsets.
The product of finitely many filters $\F_0,....., \F_n$ will be denote by $\F_0
\oplus \F_1 \oplus ..... \oplus \F_n$. We point out that if $\mathcal{A} = \{ A_i : i \in I \}$ is an $AD$-family and $\A_i$ is an $AD$-family on $A_i$, for
each $i \in I$, then
$$
\prod_{i \in I}\F_{\A_i} = \F_{\bigcup_{i \in I}\A_i}.
$$

\medskip

To construct $RK$-up-directed chains we need the following lemma from \cite[4.2]{gr01}.

\begin{lemma}\label{many-su} Let $A\in[\w]^{\w}$. Suppose that $\A = \{ A_i : i \in I \}\cup \{A\}$ is an $AD$-family
and $\A_i$ is an $AD$-family on $\w$, for each $i \in I$. If
$f_i: \w \to A_i$ is a bijection, for every $i \in I$ and, $\B$
is an $AD$-family on $A$,  then
$\F_{\A_j} \leq_{RK} \F_{\bigcup_{i \in I}f_i[\A_i]\cup \B}$ for all $j \in
I$.
\end{lemma}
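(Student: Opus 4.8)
The plan is to construct, for a fixed $j \in I$, a surjection $g : \bigcup_{i \in I} f_i[\A_i] \cup \B \to \w$ — more precisely a surjection $g : \w \to \w$ — witnessing $\F_{\A_j} \leq_{RK} \F_{\bigcup_{i \in I} f_i[\A_i] \cup \B}$, and then verify the three clauses of item (3) of Theorem \ref{theoeq}. The natural candidate is to let $g$ agree with $f_j^{-1}$ on the block $A_j$ (so that $g$ maps $A_j$ bijectively onto $\w$, carrying $f_j[\A_j]$ back to $\A_j$), and to collapse everything else harmlessly: send the remaining blocks $A_i$ ($i \neq j$) and the set $A$ (equivalently all of $\w \setminus A_j$) onto $\w$ in some fixed finite-to-one way, or even onto a single point if that already suffices — but finite-to-one is the safe choice so that clause (3)(a) holds. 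Since $A_j \in [\w]^\w$ and $A_j$ is one of the members of the $AD$-family $\bigcup_{i \in I} f_i[\A_i] \cup \B$ (as $A_j = f_j[\w] \supseteq$ each $f_j[A']$... — more carefully, $A_j$ is the ambient set on which the $AD$-family $f_j[\A_j]$ lives, and recall from the standing convention that each $AD$-family contains a partition of its underlying set, so $A_j$ is a union of members of $f_j[\A_j]$), positive sets for the target filter relative to $A_j$ correspond exactly to positive sets for $\F_{\A_j}$.

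Concretely I would carry out the verification of Theorem \ref{theoeq}(3) as follows. For (3)(a): each $g^{-1}(n)$ meets $A_j$ in exactly one point (since $g|_{A_j} = f_j^{-1}$ is a bijection) and meets each block of $\bigcup_{i \neq j} f_i[\A_i] \cup \B$ finitely, because outside $A_j$ the map $g$ was chosen finite-to-one and the blocks outside $A_j$ are almost contained in $\w \setminus A_j$. For (3)(b): let $D \in (\bigcup_{i \in I} f_i[\A_i] \cup \B)^\perp$ and $B$ a member of the $AD$-family; I must show $g[B] \cap C$ is finite for $C \in \A_j^\perp$. The point is that $g[B]$ is finite unless $B = A_j$-block, i.e.\ unless $B \in f_j[\A_j]$ up to the convention — for $B$ outside $A_j$, $g[B]$ is still infinite in general, so here I need that $g[B]$ is almost covered by finitely many members of $\A_j$; this will follow since $B$ is AD with the partition sitting inside $f_j[\A_j]$... — this is the delicate bookkeeping point. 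For (3)(c): given $S \to \F_{\A_j}$, i.e.\ $S \in C(\A_j)$, there is $A' \in \A_j$ with $|A' \cap S| = \omega$, and then $f_j[A'] \in f_j[\A_j]$ is a block of the target family with $|g^{-1}(S) \cap f_j[A']| = |f_j^{-1}(S) \cap A'| = \omega$ since $g|_{A_j}$ is a bijection onto $\w$.

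The main obstacle, as flagged above, is clause (3)(b) — ensuring that $g$ does not accidentally send a set $B$ from an ``irrelevant'' block $A_i$ ($i \neq j$) or from $\B$ onto a subset of $\w$ that is $\A_j$-positive, which would destroy the equality $g[\F_{\bigcup f_i[\A_i] \cup \B}] = \F_{\A_j}$. The cleanest way to defeat this is to choose the restriction of $g$ to $\w \setminus A_j$ so that its image is almost contained in a single fixed member $A_j^{(0)}$ of the partition $\{A_j^{(n)}\} \subseteq \A_j$ (recall the standing convention), equivalently compose with a retraction $\w \to A_j^{(0)}$; then $g[B] \subseteq^* A_j^{(0)} \in \A_j$ for every block $B$ outside $A_j$, so $g[B] \cap C$ is finite for all $C \in \A_j^\perp$ automatically, while (3)(a) and (3)(c) are unaffected by this modification (for (3)(a) one uses that the retraction onto $A_j^{(0)}$ is chosen finite-to-one, and for (3)(c) the witnessing block $f_j[A']$ already sits inside $A_j$). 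With $g$ so defined, all three clauses of Theorem \ref{theoeq}(3) hold, giving $\F_{\A_j} \leq_{RK} \F_{\bigcup_{i \in I} f_i[\A_i] \cup \B}$; since $j \in I$ was arbitrary, this completes the proof.
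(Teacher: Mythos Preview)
The paper does not actually prove this lemma; it is quoted from \cite[4.2]{gr01} and used as a black box. So there is no in-paper proof to compare against, and the relevant question is whether your argument stands on its own.

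It does. Your construction --- set $g|_{A_j}=f_j^{-1}$, and on $\w\setminus A_j$ let $g$ be a bijection onto a single fixed element $A'\in\A_j$ --- is correct, and your verification of the three clauses of Theorem~\ref{theoeq}(3) goes through: clause (a) because $g^{-1}(n)$ has one point in $A_j$ and at most one point in $\w\setminus A_j$; clause (c) because $S\to\F_{\A_j}$ forces $|S\cap A'|=\w$ for some $A'\in\A_j$, and then $f_j[A']\in\C$ meets $g^{-1}(S)\cap A_j=f_j[S]$ infinitely; clause (b) because any $B\in\C$ not in $f_j[\A_j]$ is almost disjoint from $A_j$, so $g[B]$ is almost contained in the fixed $A'\in\A_j$ and hence almost disjoint from every $C\in\A_j^\perp$, while for $B=f_j[A'']\in f_j[\A_j]$ one has $g[B]=A''\in\A_j$.

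One small correction: you invoke ``the standing convention'' that every $AD$-family contains a partition. That convention is introduced in Section~4 specifically for filters of the form $S_\A$, where enlarging $\A$ by finite modifications does not change the filter; it does \emph{not} apply to the filters $\F_\A$ appearing here, since adding members to $\A$ genuinely shrinks $\F_\A$. Fortunately your argument never uses the partition --- you only need \emph{one} element $A_j^{(0)}\in\A_j$ to absorb the image of $g|_{\w\setminus A_j}$, and any $AD$-family has such an element. So drop the reference to the convention and simply fix an arbitrary $A'\in\A_j$; the proof is then clean.
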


We remark that  $\leq_{RK}$ can be replaced by $\leq_{TU}$ in the previous Lemma.

\begin{theorem}
If $\{\A_{\xi}:\xi<\c\}$ is a collection of $AD$-families, then there is
an $AD$-family $\C$ such that $\F_{\A_{\xi}}<_{RK}\F_{\C}$ for all $\xi<\c$.
\end{theorem}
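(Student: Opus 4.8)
The plan is to build the amalgamating $AD$-family $\C$ by placing pairwise disjoint copies of all the given families $\A_\xi$ inside a common fixed partition, and then append one extra ``diagonal'' $AD$-family that sees all of them. Concretely, fix a partition $\{B_\xi : \xi < \c\}$ of $\w$ into infinite subsets together with one more infinite set $B$ disjoint from all $B_\xi$ (adjusting so that $\bigcup_{\xi<\c} B_\xi \cup B = \w$); pick bijections $f_\xi : \w \to B_\xi$, and set $\A_\xi' = f_\xi[\A_\xi]$, an $AD$-family living on $B_\xi$. The union $\bigcup_{\xi<\c}\A_\xi'$ is an $AD$-family because the $B_\xi$ are pairwise disjoint. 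Now choose any $AD$-family $\B$ on $B$ (for definiteness an infinite one, e.g.\ a partition of $B$), and put $\C = \bigcup_{\xi<\c}\A_\xi' \cup \B$. By Lemma \ref{many-su} (with the index set $I = \c$, the roles of $\A_i$ played by the $\A_\xi$, and the extra set $A$ played by $B$ carrying $\B$), we get $\F_{\A_\xi} \approx \F_{\A_\xi'} \leq_{RK} \F_{\bigcup_{\eta<\c} f_\eta[\A_\eta] \cup \B} = \F_\C$ for every $\xi < \c$, which gives $\F_{\A_\xi} \leq_{RK} \F_\C$ for all $\xi$.

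The remaining work is to upgrade $\leq_{RK}$ to $<_{RK}$, i.e.\ to ensure $\F_{\A_\xi} \not\approx \F_\C$ for each $\xi$. The clean way is a cardinality/character argument: by Lemma \ref{char-nmad} and the analogous fact for $\F_\A$, the character of $\F_\C$ is at least $|\C| = \c$ (it is exactly the number of $AD$-sets needed, and $\C$ has size $\c$), while each $\F_{\A_\xi}$ has character $|\A_\xi|$, which may or may not be $\c$. This alone does not suffice when some $\A_\xi$ already has size $\c$. To handle that, I would instead argue that $\F_\C$ cannot be equivalent to any $\F_{\A_\xi}$ because $\xi(\F_\C)$ ``remembers'' too much structure: if $g$ were a bijection with $g[\F_\C] = \F_{\A_\xi}$, then $g$ would have to carry the convergent sequences of $\F_\C$ bijectively onto those of $\F_{\A_\xi}$, and since $\F_\C$ has $\c^+$-many ``independent directions'' of convergence coming from the disjoint blocks $B_\eta$ ($\eta \neq \xi$) that are invisible to $\F_{\A_\xi}$ — more precisely, for $\eta \neq \xi$ any $S \to \F_{\A_\eta}$ inside $B_\eta$ gives $S \in C(\F_\C)$, and the families of such sequences across the $\c^+$-many blocks are ``incompatible'' in a way a single $\F_{\A_\xi}$ cannot accommodate.

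A more robust alternative, and the one I would actually push through, is to enlarge the construction slightly so that $\F_\C$ strictly dominates even in the worst case. Add to the list a ``universal'' family: since there are only $\c$ many $AD$-families up to the relevant equivalence is false (there are $2^\c$), but we only have $\c$ of them in our collection, so we may simply also throw in, for each $\xi$, a second disjoint copy $\A_\xi''$ of $\A_\xi$ on a fresh block. Then $\F_\C$ contains two disjoint copies of each $\F_{\A_\xi}$ sitting orthogonally, so $\F_{\A_\xi}|_{B_\xi'} \approx \F_{\A_\xi}$ while $\F_\C|_{B_\xi'}$ is a proper $FU$-extension, forcing $\F_{\A_\xi} <_{RK} \F_\C$ via the established general machinery ($\F_{\A_\xi}$ is an $RK$-predecessor but a bijection is impossible because removing one block changes the filter). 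I expect the genuinely delicate point to be exactly this non-equivalence: proving $\F_{\A_\xi}\not\approx\F_\C$ requires a structural invariant preserved by $\approx$ — character works when $|\A_\xi| < \c$, and for $|\A_\xi| = \c$ one needs the ``multiplicity of independent convergent-sequence directions'' argument sketched above. Everything else (the disjointification, the bijections, the single application of Lemma \ref{many-su}, and the verification that $\C$ is $AD$) is routine.
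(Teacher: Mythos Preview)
Your setup contains an immediate error: you cannot partition $\w$ into $\c$ many pairwise disjoint infinite sets $\{B_\xi : \xi < \c\}$, since $\w$ is countable. This is precisely why Lemma~\ref{many-su} is stated for an \emph{almost disjoint} family $\{A_\xi : \xi < \c\} \cup \{A\}$ rather than a partition; with that correction (take any $AD$-family of size $\c$ on $\w$ and bijections $f_\xi : \w \to A_\xi$), the $\leq_{RK}$ half of your argument goes through as written.

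The substantive gap is in the strict inequality. Your character argument fails exactly when $|\A_\xi| = \c$, as you already note, and neither fallback works: the ``multiplicity of independent directions'' heuristic is not a proof, and the ``two disjoint copies'' trick does not force non-equivalence (the $FAN$-filter $\F_\P$ contains two disjoint copies of itself, so this invariant does not separate anything). The paper resolves this with a counting argument you missed entirely: there are $2^{\c}$ pairwise non-equivalent $FU$-filters of the form $\F_\B$ with $\B$ an $AD$-family on $A$, while each $\F_{\A_\xi}$ has at most $\c$ $RK$-predecessors (there are only $\c$ functions $\w \to \w$), so altogether at most $\c \cdot \c = \c$ filters lie $RK$-below some $\F_{\A_\xi}$. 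Hence one may choose a single $\B$ on $A$ with $\F_\B \nleq_{RK} \F_{\A_\xi}$ for every $\xi < \c$. Since $A \in \F_\C^+$ and $\F_\C|_A$ is equivalent to $\F_\B$, Corollary~\ref{coroturk} gives $\F_\B \leq_{RK} \F_\C$; thus $\F_{\A_\xi} \approx \F_\C$ would force $\F_\B \leq_{RK} \F_{\A_\xi}$, a contradiction. One well-chosen $\B$ kills all $\xi$ simultaneously, and no structural analysis of $\F_\C$ is needed.
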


\begin{proof} Fix $A\in[\w]^{\w}$ so that $\w\setminus A$ is infinite and let
$\{A_{\xi}:\xi<\c\}$ be an $AD$-family on $\w\setminus A$. For each $\xi<\c$ choose
a bijection $f_{\xi}:\w\to A_{\xi}$. By the previous lemma we obtain that
$\F_{A_{\xi}}\leq_{RK}\F_{\cup_{\xi<\c}f[A_{\xi}]\cup \B}$ for all $\xi<\c$ and for every $AD$-family $\B$ on $A$.
We know that there are $2^{\c}$
pairwise distinct $AD$-families on $A$, and since  every filter $\F_{\A_{\xi}}$ has at most
$\c$-many  $RK$-predecessors,   we can find an $AD$-family $\B$ such that $\F_{\B} \nleq_{RK} \F_{\A_{\xi}} $ for all $\xi<\c$.
Therefore,  $\F_{\A_{\xi}}<_{RK}\F_{\C}$ for all $\xi<\c$, where  $\C=\bigcup_{\xi<\c}f[A_{\xi}]\cup \B$.
\end{proof}

\begin{corollary}
There is a strictly increasing $RK$-chain of $FU$-filters of size  $\c^+$ $RK$-above  every $FU$-filter.
\end{corollary}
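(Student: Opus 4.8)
The plan is to obtain the chain by a transfinite recursion of length $\c^+$ that feeds the preceding theorem into itself. Concretely, I would build a sequence $\langle \C_\alpha : \alpha < \c^+ \rangle$ of $AD$-families such that $\F_{\C_\beta} <_{RK} \F_{\C_\alpha}$ for all $\beta < \alpha < \c^+$. Since every filter of the form $\F_{\D}$ with $\vac \neq \D \subseteq [\w]^{\w}$ is an $FU$-filter, each $\F_{\C_\alpha}$ is automatically an $FU$-filter; the sequence $\langle \F_{\C_\alpha} : \alpha < \c^+ \rangle$ is then strictly $<_{RK}$-increasing, and its members are pairwise non-equivalent (hence pairwise distinct), so the chain has size $\c^+$.

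To make the chain lie $RK$-above the prescribed $FU$-filters, recall from Lemma \ref{simon} that each such filter has the form $\F_{\A}$ for an $AD$-family $\A$; fix the corresponding $AD$-families (any family of at most $\c$ of them) and, at the base step, apply the preceding theorem to this collection to obtain $\C_0$ with $\F_{\A} <_{RK} \F_{\C_0}$ for every prescribed $\A$. Then, assuming $0 < \alpha < \c^+$ and $\langle \C_\beta : \beta < \alpha \rangle$ has been constructed, I use that $|\{\C_\beta : \beta < \alpha\}| \le \c$ because $\alpha < \c^+$; hence the preceding theorem applies again to $\{\C_\beta : \beta < \alpha\}$ and yields an $AD$-family $\C_\alpha$ with $\F_{\C_\beta} <_{RK} \F_{\C_\alpha}$ for all $\beta < \alpha$. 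This closes the recursion, and every member of the chain satisfies $\F_{\A} <_{RK} \F_{\C_0} \le_{RK} \F_{\C_\alpha}$, so it is $RK$-above each prescribed $\F_{\A}$.

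I do not expect a real obstacle beyond the preceding theorem itself: being a chain and the strictness of each step are read off directly from its conclusion (which produces $<_{RK}$, not merely $\le_{RK}$), and no appeal to transitivity of $<_{RK}$ is needed, since at stage $\alpha$ the theorem handles all $\beta < \alpha$ at once. The only point requiring attention is the limit-stage bookkeeping — and this is exactly where $\c^+$ is the right length: an ordinal below $\c^+$ has cardinality at most $\c$, which is precisely the size restriction under which the preceding theorem can be invoked on the set of already-chosen families (and the reason the construction cannot be continued past stage $\c^+$ by this method).
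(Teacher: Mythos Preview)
Your proposal is correct and is exactly the argument the paper leaves implicit (the corollary is stated without proof): iterate the preceding theorem through a transfinite recursion of length $\c^+$, feeding it at stage $\alpha$ the at-most-$\c$-sized family $\{\C_\beta:\beta<\alpha\}$ of previously constructed $AD$-families. Your reading of ``above every $FU$-filter'' is also the right one --- since any filter has at most $\c$ many $RK$-predecessors while there are $2^{\c}$ pairwise non-equivalent $FU$-filters, the statement must be parsed as ``for each given $FU$-filter (or, as your construction in fact yields, each prescribed family of $\le\c$ of them) there is such a chain above it''.
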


%%%%%%%%%%%%%%%%%%%%%%%%%%%%%%%%%%%%%%%%%%%%%%%%%%%%%%%%%%%%%%%%%%
%%%%%%%%%%%%%%%%%%%%%%%%%%%%%%%%%%%%%%%%%%%%%%%%%%%%%%%%%%%%%%
%%%%%%%%%%%%%%%%%%%%%%%%%%%%%%%%%%%%%%%%%%%%%%%%%%%%%%%%%%%%%%%%%%%%%%%%%%%%%%%%%%
\section{$RK$-Incomparability of $FU$-filters}
%%%%%%%%%%%%%%%%%%%%%%%%%%%%%%%%

In this section, we construct an  $RK$-antichain consisting of $FU$-filters. The authors of \cite{tu05} have proved the existence of a
$TU$-antichain of size  $\c^+$ consisting of $FU$-filters.

\medskip

The next notions introduced by A. V. Arhangel'skii  in \cite{ar} will help us to distinguish several $FU$-filters.

\begin{definition}
Let $X$ be an space and $x\in X$. A {\it sheaf} of $x$ is a family of
sequences $\{C_n:n<\w\}$ in $X$ converging to $x$.
We say that  $x$ is an $\alpha_i$-{\it point} (for each $i=1,2,3,4$) if for every  sheaf $\{C_n:n<\w\}$ of $x$
there is a sequence $B$ converging to $x$ such that:
\begin{enumerate}
\item[] ($\alpha_1$) $C_n\subseteq^*B$, for all $n<\w$.
\item[] ($\alpha_2$) $C_n\subseteq^*B$, for all $n<\w$.
\item[] ($\alpha_3$) $|C_n\cap B|=\w$, for infinitely many $n<\w$.
\item[] ($\alpha_4$) $C_n\cap B\neq\vac$, for infinitely many $n<\w$.
\end{enumerate}
The space $X$ is called $\alpha_i$-{\it space} if every point in $X$ is an $\alpha_i$-point. In particular,
a filter $\F$ is an $\alpha_i$-{\it filter} if its nonisolated point is an
$\alpha_i$ point in the space $\xi(\F)$, for every $i=1,2,3,4$.
\end{definition}

It is straightforward to prove the following implications:
$$
 first \ countability \Rightarrow \alpha_1 \Rightarrow \alpha_2
\Rightarrow \alpha_3\Rightarrow \alpha_4.
$$
The $FAN$-filter is a canonical example of a $FU$-filter which
is not an $\alpha_4$-filter; indeed, it is well-know that a space is not an
$\alpha_4$-space iff the space contains a copy of $FAN$-space (for a prove see \cite{sw}).
In the article \cite{si}, P. Simon  constructed a completely separable $NMAD$-family $\A$ of size $\c$
such that  $S_{\A}$ is an $\alpha_4$-filter which is not an
$\alpha_3$-filter. For this $AD$-family $\A$, it is easy to show that $\F_{\A}$ is also an $\alpha_4$-filter that is not
an $\alpha_3$-filter.

\medskip

In the following, we shall use a standard well-known technic to construct $FU$-filters by using the Cantor tree $2^{<\w} = \bigcup_{n < \w}2^n$:

\medskip

For each  $x\in 2^{\w}$ we define $A_x=\{x|n:n<\w\}\subseteq 2^{<\w}$. For every
infinite $X\subseteq 2^{\w}$ we have that  $\A_{X}=\{A_x:x\in X\}$ is an $NMAD$-family
on $2^{<\w}$. By identifying $2^{<\w}$ with $\w$, the family  $\A_{X}$ can be considered as a family of subsets of $\w$.
P. Nyikos (\cite{ny1}) proved that $S_{\A_X}$ is an $\alpha_3$-filter
for all infinite  $X\subseteq 2^{\w}$. He also showed that there is $Z\subseteq 2^{\w}$
for which  $S_{\A_Z}$ is an $\alpha_2$-filter, but in general
this assertion could fail; for instance, $S_{\A_{2^{\w}}}$ is not an $\alpha_2$-filter.
All examples  of $FU$-filters given above lie in $ZFC$. Nyikos  have proved in \cite{ny2},
under the assumption  $\w_1=\mathfrak{b}$, that there is an $FU$-space that is
$\alpha_2$-space but it fails to be $\alpha_1$-space. In the same paper, it was proved that
if $\w_1<\mathfrak{b}$, then there is an $FU$-space which is $\alpha_1$ but it is not
a first countable space. Years later,  A. Dow (\cite{dow}) proved that the implication  ``$\alpha_2\Rightarrow \alpha_1$'' holds
inside of the  Lavers Model and together with J. Stepr\={a}ns \cite{dowst} constructed
a model of $ZFC$ in which every $\alpha_1$-space is a first countable space.
 The existence of an $\alpha_2$-space
which is not an $\alpha_1$-space, and the existence of an $\alpha_1$-space
which is not an first countable space are still open problems in $ZFC$.

\medskip

Now let us prove that the properties  $\alpha_2$, $\alpha_3$ and $\alpha_4$  are preserved
by the $RK$-order down-directed.

\begin{theorem}\label{theoalphas}
Let $\F_{\A}$ and $\F_{\B}$ be two $FU$-filters. If $\F_{\B}$ is an $\alpha_i$-filter
and $\F_{\A}\leq_{RK}\F_{\B}$, then $\F_{\A}$ is also an $\alpha_i$-filter, for each $i=2,3,4$.
\end{theorem}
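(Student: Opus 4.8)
The plan is to transfer sheaves back and forth along a Rudin--Keisler reduction. If $\F_{\A}$ is relatively equivalent to the Fr\'echet filter (in particular if $\F_{\A}=\F_r$) then it is first countable, hence an $\alpha_1$-filter, so there is nothing to prove; otherwise $\F_{\B}\neq\F_r$ (else $\F_{\A}\leq_{RK}\F_r$ would force $\F_{\A}=\F_r$), and by Theorem~\ref{new-RK} there is a surjection $f:\w\to\w$ with $f[\F_{\B}]=\F_{\A}$. I will use three facts read off from Theorem~\ref{theoeq}: (i) every fibre $f^{-1}(n)$ lies in $\I_{\F_{\B}}$ (clause $(3)(a)$), so $f$ is finite-to-one on every $S$ with $S\to\F_{\B}$, whence $f[S]$ is infinite and (a short check) $f[S]\to\F_{\A}$; (ii) if $C\to\F_{\A}$ then $f^{-1}(C)\in\F_{\B}^{+}$, since $\w\setminus C\notin\F_{\A}$ (an infinite set convergent to $\F_{\A}$ is not in $\I_{\F_{\A}}$) and hence $f^{-1}(\w\setminus C)=\w\setminus f^{-1}(C)\notin\F_{\B}$; (iii) if $C\to\F_{\A}$, $S\subseteq f^{-1}(C)$ and $S\to\F_{\B}$, then $f[S]\subseteq C$ and $C\setminus f[S]$ is finite as soon as it belongs to $\I_{\F_{\A}}$, because an infinite set cannot simultaneously converge to $\F_{\A}$ and lie in $\I_{\F_{\A}}$.

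Now let $\{C_n:n<\w\}$ be a sheaf of $\F_{\A}$. For $i=3,4$: by (ii) and FU-ness of $\F_{\B}$ pick $S_n\in[f^{-1}(C_n)]^{\w}$ with $S_n\to\F_{\B}$; then $\{S_n:n<\w\}$ is a sheaf of $\F_{\B}$, and applying the $\alpha_i$-property of $\F_{\B}$ we get $T\to\F_{\B}$. Put $B:=f[T]$, which converges to $\F_{\A}$ by (i). If $i=4$ and $S_n\cap T\neq\vac$ for infinitely many $n$, then $\vac\neq f[S_n\cap T]\subseteq C_n\cap B$ for those $n$; if $i=3$ and $|S_n\cap T|=\w$ for infinitely many $n$, then by (i) $f[S_n\cap T]$ is infinite and contained in $C_n\cap B$. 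Either way $B$ witnesses the required property for $\{C_n\}$.

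The case $i=2$ is the main obstacle: from $S_n\subseteq^{*}T$ one obtains only $f[S_n]\subseteq^{*}B$, so I must in addition arrange, for each $n$, that $f[S_n]$ is \emph{cofinite} in $C_n$; by (iii) it is enough that $C_n\setminus f[S_n]\in\I_{\F_{\A}}$. The relevant structural remark is that for every $X\in\I_{\F_{\B}}$ the set $\{m\in C_n:f^{-1}(m)\subseteq X\}$ is finite: otherwise the union of the corresponding fibres would be an infinite subset of $X$ meeting each member of $\B$ finitely, contradicting clause $(3)(c)$ of Theorem~\ref{theoeq} applied to that (convergent) subset of $C_n$. Using this, I would build $S_n$ recursively inside $f^{-1}(C_n)$: having chosen $S_n^{0},\dots,S_n^{k-1}\to\F_{\B}$ with pairwise disjoint images and $C_n^{k}:=C_n\setminus\bigcup_{j<k}f[S_n^{j}]$, use (ii) and FU-ness to pick $S_n^{k}\to\F_{\B}$ inside $f^{-1}(C_n^{k})$ containing a preimage of $\min C_n^{k}$; then $\bigcap_k C_n^{k}=\vac$, so $f\big[\bigcup_k S_n^{k}\big]=C_n$. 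The heart of the argument is to carry out these choices so that $S_n:=\bigcup_k S_n^{k}$ still converges to $\F_{\B}$, and this is precisely where one must exploit that $C(\F_{\B})=\I_{\F_{\B}}^{\perp}$ is a P-ideal (the combinatorial content of $\F_{\B}$ being $\alpha_2$) together with the finiteness remark, arranging that $S_n$ meets every member of $\I_{\F_{\B}}$ in a finite set. Once such $S_n$ are in hand, $\{S_n:n<\w\}$ is a sheaf of $\F_{\B}$, $\alpha_2$ of $\F_{\B}$ yields $T\to\F_{\B}$ with $S_n\subseteq^{*}T$ for all $n$, and then $C_n=f[S_n]\subseteq^{*}f[T]$ for all $n$, so $B:=f[T]$ works. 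I expect the production of these convergent $S_n$ with $f[S_n]$ cofinite in $C_n$ to be the only non-routine step; everything else is bookkeeping with pullbacks.
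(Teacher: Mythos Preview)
Your arguments for $i=3,4$ are correct and match the paper's approach; the only cosmetic difference is that the paper uses clause~(3)(c) of Theorem~\ref{theoeq} to pick $B_n\in\B$ with $|f^{-1}(C_n)\cap B_n|=\w$, whereas you invoke FU-ness of $\F_{\B}$ to get $S_n\to\F_{\B}$ inside $f^{-1}(C_n)$. Since $\B$ is maximal in $C(\F_{\B})$, these are the same move.

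The trouble is entirely in your $i=2$ case, and it stems from a misreading of the definition. The paper's display of $(\alpha_1)$ and $(\alpha_2)$ is a typo: the intended (standard) $\alpha_2$ condition is $|C_n\cap B|=\w$ for \emph{all} $n<\w$, not $C_n\subseteq^{*}B$. The paper's own proof makes this explicit (``there is a sequence $B$ converging to $\F_{\B}$ such that $|B\cap(f^{-1}(C_n)\cap B_n)|=\w$ for all $n<\w$''). With the correct definition, $\alpha_2$ is handled verbatim by your $\alpha_3$ argument with ``infinitely many $n$'' replaced by ``all $n$'': from $|S_n\cap T|=\w$ for every $n$ you get $f[S_n\cap T]\subseteq C_n\cap f[T]$ infinite for every $n$, and $B:=f[T]$ works.

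What you attempted instead is essentially the $\alpha_1$ case (the condition $C_n\subseteq^{*}B$), and there your outline has a real gap. You assert that $C(\F_{\B})=\I_{\F_{\B}}^{\perp}$ being a $P$-ideal is ``the combinatorial content of $\F_{\B}$ being $\alpha_2$''; in fact that is exactly the content of $\alpha_1$, which the theorem does not claim to transfer. Even granting yourself the $P$-ideal property, you do not actually carry out the construction of convergent $S_n\subseteq f^{-1}(C_n)$ with $f[S_n]$ cofinite in $C_n$; you only sketch a recursion producing $\bigcup_k S_n^{k}$ with full image, and the step ``arrange that $S_n$ meets every member of $\I_{\F_{\B}}$ in a finite set'' is precisely the missing idea. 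So as written, your $i=2$ argument neither proves the stated theorem (because it targets the wrong property) nor completes the harder $\alpha_1$ statement it is implicitly aiming at.
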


\begin{proof}  We  only  give a proof for the $\alpha_2$-property  since the procedure for $\alpha_3$ and $\alpha_4$ is exactly the same.
Let $\{C_n:n<\w\}\subset C(\F_{\A})$ be a sheaf of $\F_{\A}$ and $f:\w\to\w$
such that $f[\F_{\B}]=\F_{\A}$. By Theorem \ref{theoeq}, we can find
$B_n\in\B$ such that $|f^{-1}(C_n)\cap B_n|=\w$ for every $n<\w$.
Notice that $\{f^{-1}(C_n)\cap B_n:n<\w\}$ is a sheaf of $\F_{\B}$.
Since $\F_{\B}$ is an $\alpha_2$-filter, then there is a sequence $B$ converging
to $\F_{\B}$ such that $|B\cap (f^{-1}(C_n)\cap B_n)|=\w$ for all
$n<\w$. We remark that $f[B]\to \F_{\A}$. Fix  $n<\w$. Let us  prove that $|f[B]\cap C_n|=\w$.
Indeed, we have that
$$
f[B\cap (f^{-1}(C_n)\cap B_n)]\subseteq f[B]\cap (C_n\cap f[B_n])
\subseteq f[B]\cap C_n.
$$
Since $B\cap (f^{-1}(C_n)\cap B_n) \to \F_{\B}$, then $|f[B\cap (f^{-1}(C_n)\cap B_n)]|=\w$
and so $|f[B]\cap C_n|=\w$. Therefore, $\F_{\A}$ is an
$\alpha_2$-filter.
\end{proof}

For an arbitrary $NMAD$-family $\A$, we know that the filter $\F_{\A}$ cannot be an $\alpha_3$-filter. Thus
we obtain the following corollary.

\begin{corollary}\label{a}
$S_\P$ is not an $RK$-successor of $\F_{\A}$ for any $NMAD$-family $\A$.
\end{corollary}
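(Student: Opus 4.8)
The plan is to obtain the corollary directly from Theorem~\ref{theoalphas} together with the fact recorded just before the corollary, namely that $\F_{\A}$ is never an $\alpha_3$-filter when $\A$ is an $NMAD$-family. The only missing ingredient is that $S_{\P}$ is itself an $\alpha_3$-filter, and this is where the small amount of genuine work lies; I would in fact verify the stronger statement that $S_{\P}$ is an $\alpha_1$-filter.

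To do this I would first describe $C(S_{\P})$ explicitly. Since $S_{\P}=\F_{\I(\P)}$ by definition, its dual ideal is $\I(\P)$, so fact~(2) gives $C(S_{\P})=\I(\P)^{\perp}$, which (as $\I(\P)$ is generated by the pieces of $\P$) equals $\{B\in[\w]^{\w}:|B\cap P_{n}|<\w\text{ for all }n<\w\}$. Now let $\{C_{n}:n<\w\}$ be a sheaf of $S_{\P}$, so that each $C_{n}$ meets every $P_{m}$ in a finite set, and put
$$
B:=\bigcup_{m<\w}\ \bigcup_{n\le m}\bigl(C_{n}\cap P_{m}\bigr).
$$
Then $B\cap P_{m}=\bigcup_{n\le m}(C_{n}\cap P_{m})$ is a finite union of finite sets for every $m$, hence $B\in\I(\P)^{\perp}=C(S_{\P})$; and $C_{n}\setminus B\subseteq\bigcup_{m<n}(C_{n}\cap P_{m})$ is finite, so $C_{n}\subseteq^{*}B$ for every $n$. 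Thus $S_{\P}$ is an $\alpha_1$-filter, in particular an $\alpha_3$-filter.

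With this in hand the argument closes quickly. I would note that $S_{\P}$ is a $FU$-filter (Theorem~\ref{sfu}, since a partition is an $NMAD$-family) and that, as $\I(\P)$ is an infinite subset of $[\w]^{\w}$, Lemma~\ref{simon} lets us write $S_{\P}=\F_{\B}$ for some $AD$-family $\B$, which is the form required by Theorem~\ref{theoalphas}. Suppose now, toward a contradiction, that $S_{\P}$ is an $RK$-successor of $\F_{\A}$ for some $NMAD$-family $\A$, i.e.\ $\F_{\A}\leq_{RK}S_{\P}$. Since $\F_{\A}$ and $S_{\P}$ are $FU$-filters and $S_{\P}$ is an $\alpha_3$-filter, Theorem~\ref{theoalphas} forces $\F_{\A}$ to be an $\alpha_3$-filter, contradicting the fact that no $\F_{\A}$ with $\A$ an $NMAD$-family is $\alpha_3$. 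Hence $S_{\P}$ is not an $RK$-successor of any such $\F_{\A}$. I do not anticipate a real obstacle here: the only non-bookkeeping step is the construction of $B$ above, and everything else is an application of results already proved; the single point to watch is merely presenting $S_{\P}$ in the shape $\F_{\B}$ so that Theorem~\ref{theoalphas} applies verbatim.
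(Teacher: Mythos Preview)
Your argument is correct and follows the same logic the paper intends: combine Theorem~\ref{theoalphas} with the fact (stated just before the corollary) that $\F_{\A}$ is never $\alpha_3$ for an $NMAD$-family $\A$, together with $S_{\P}$ being $\alpha_3$. The only difference is how you obtain this last ingredient: you prove by hand that $S_{\P}$ is $\alpha_1$, whereas the paper has already observed that $S_{\P}$ has a countable base, so first countability gives $\alpha_1$ (hence $\alpha_3$) for free via the chain of implications recorded right after Definition~6.1; your explicit diagonal set $B$ is correct but unnecessary.
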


The next corollary is consequence a from Corollary \ref{a} and Corollary 5.6 from
\cite{gr01}.

\begin{corollary}
$S_\P$ is $RK$-incomparable with every filter $\F_{\A}$ such that $|\A|<\mathfrak{b}$.
\end{corollary}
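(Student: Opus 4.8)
The plan is to combine the two facts already available: by Corollary~\ref{a}, $S_\P$ is not an $RK$-successor of $\F_\A$ for any $NMAD$-family $\A$, and by the cited Corollary~5.6 of \cite{gr01} (which asserts that $\F_\A\nleq_{RK}S_\P$ whenever $|\A|<\mathfrak b$, i.e. $S_\P$ has no $RK$-predecessor of the form $\F_\A$ with $\A$ small), the filter $\F_\A$ is not an $RK$-predecessor of $S_\P$ either. Putting these together, whenever $|\A|<\mathfrak b$ there is no surjection witnessing $\F_\A\leq_{RK}S_\P$ and none witnessing $S_\P\leq_{RK}\F_\A$, so the two filters are $RK$-incomparable. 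So the body of the argument is essentially a two-line citation; the content is in checking that the hypotheses of the two invoked results genuinely cover all cases.

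First I would note that every $\F_\A$ with $\A$ an $NMAD$-family is a $FU$-filter (Theorem~\ref{sfu} for $S_\A$, and the analogous/standard fact for $\F_\A$, or directly via Lemma~\ref{simon}), so Theorem~\ref{theoalphas} and hence Corollary~\ref{a} apply to it; this gives one half of the incomparability outright, since $S_\P$ is an $\alpha_3$-filter (Nyikos, or more simply: $S_\P$ has countable base hence is first countable, hence $\alpha_1$) while $\F_\A$ is never $\alpha_3$ for $\A$ an $NMAD$-family — so $\F_\A\leq_{RK}S_\P$ would force $S_\P$ to fail $\alpha_3$ by the down-directed preservation, contradiction. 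Wait — one must be careful about the direction: Theorem~\ref{theoalphas} says $\alpha_i$ passes \emph{down} the $RK$-order, so if $\F_\A\leq_{RK}S_\P$ and $S_\P$ is $\alpha_3$ then $\F_\A$ would be $\alpha_3$, which is false; that is exactly Corollary~\ref{a}. The other half, $S_\P\nleq_{RK}\F_\A$, is where the hypothesis $|\A|<\mathfrak b$ enters: it is precisely the content of Corollary~5.6 of \cite{gr01}, whose proof presumably uses that a surjection $g$ with $g[\F_\A]=S_\P$ would pull the countable partition $\P$ back through an $AD$-family of size $<\mathfrak b$ and produce, via a $\leq^*$-unbounded or dominating phenomenon, a contradiction with $\chi(S_\P)=\w$ or with the combinatorial characterization in Theorem~\ref{theoeq}.

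The main obstacle — really the only place where anything could go wrong — is verifying that the bound $\mathfrak b$ in the cited Corollary~5.6 is stated for exactly the configuration we need (an $AD$-family $\A$ of size $<\mathfrak b$, $S_\P$ on the right), and that no degenerate subcase slips through: in particular one should confirm that $\F_\A\neq\F_r$ is not required, or handle the case $\F_\A=\F_r$ separately (there $\F_r\leq_{RK}S_\P$ trivially, and $S_\P\nleq_{RK}\F_r$ by the remark after Theorem~\ref{new-RK}, so $\F_r$ and $S_\P$ are still $RK$-incomparable, consistent with the statement since $|\emptyset|$... rather, $\F_r=\F_\A$ forces $\A$ to be a $MAD$-family of size $\geq\mathfrak b$ actually $\geq\mathfrak a\geq\mathfrak b$ is false in general, so this subcase does not even arise under $|\A|<\mathfrak b$ since a $MAD$-family has size $\geq\mathfrak a$ but we only need $\A$ not $MAD$, which is automatic when $\F_\A$ is a $FU$-filter). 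Once these bookkeeping points are settled, the proof is the concatenation of the two corollaries, and I would write it in two or three sentences.
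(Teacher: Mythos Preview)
Your approach is exactly the paper's: the corollary is obtained by concatenating Corollary~\ref{a} (which yields $\F_\A\nleq_{RK}S_\P$ via the failure of $\alpha_3$ for $\F_\A$) with Corollary~5.6 of \cite{gr01} (which yields $S_\P\nleq_{RK}\F_\A$ under $|\A|<\mathfrak b$). The paper's proof is literally the one-line citation of these two facts, and your second paragraph reproduces this correctly, including the right direction for the $\alpha_i$-preservation argument.

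One small slip to fix: in your opening paragraph you state that Corollary~5.6 of \cite{gr01} gives ``$\F_\A\nleq_{RK}S_\P$'' and then paraphrase it as ``$\F_\A$ is not an $RK$-predecessor of $S_\P$''. Both of these are the \emph{same} direction as Corollary~\ref{a}, not the complementary one. The correct content of the cited Corollary~5.6 is $S_\P\nleq_{RK}\F_\A$ for $|\A|<\mathfrak b$, exactly as you write in your second paragraph; just make the first paragraph consistent with that. Once this is straightened out, your write-up is fine and matches the paper.
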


By using the Corollary \ref{coroturk}, Theorem \ref{theoalphas} and some facts quoted above we obtain the next result.

\begin{corollary}
$\F_{\P}\leq_{RK} \F$ iff $\F_{\P}\leq_{TU} \F$.
\end{corollary}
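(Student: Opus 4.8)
The plan is to prove both directions separately, using the established machinery. The nontrivial direction is ``$\F_\P \leq_{RK} \F \Rightarrow \F_\P \leq_{TU} \F$'', since the reverse implication is immediate from Corollary \ref{coroturk}. So first I would dispose of $(\Leftarrow)$: if $\F_\P \leq_{TU} \F$, then because $\F_\P$ is a $FU$-filter, Corollary \ref{coroturk} gives $\F_\P \leq_{RK} \F$ at once.

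For the forward direction, suppose $\F_\P \leq_{RK} \F$. Since $\F_\P = \prod_{n<\w}\F_r(P_n)$ has a nontrivial convergent sequence, the same is true of $\F$ (an $RK$-predecessor property: a convergent sequence in $\F_\P$ pulls back along a section of the surjection to a convergent sequence in $\F$), so $\F \neq \F_r$; in particular $C(\F) \neq \vac$, so $\F$ is a $FU$-filter and we may write $\F = \F_\A$ for some $AD$-family $\A$ maximal in $\I_\F^\perp$ by Lemma \ref{simon}. The key point is that by Theorem \ref{theoalphas}, since $\F_\P$ is not an $\alpha_4$-filter (the canonical $FAN$ obstruction) and $\F_\P \leq_{RK} \F_\A$, the filter $\F_\A$ is not an $\alpha_4$-filter either. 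By the quoted fact that a space fails to be an $\alpha_4$-space iff it contains a copy of the $FAN$-space, the non-isolated point of $\xi(\F_\A)$ is not an $\alpha_4$-point, which means there is a sheaf $\{C_n : n<\w\} \subseteq C(\F_\A)$ witnessing the failure: no sequence converging to $\F_\A$ meets infinitely many of the $C_n$. I would then argue that we may assume the $C_n$ are pairwise disjoint (shrinking inductively, using that the $C_n$ are almost disjoint from the members of $\A$ witnessing the sheaf) and that $D := \bigcup_{n<\w} C_n \in \F_\A^+$. Restricting to $D$, the family $\{C_n : n<\w\}$ forms (after refining to an $AD$-family whose filter coincides) exactly a partition into infinite pieces, and the failure of $\alpha_4$ forces $\F_\A|_D$ to be (relatively equivalent to, hence on a further positive restriction equal to) a copy of the $FAN$-filter $\F_\P$. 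Concretely: since every sequence converging to $\F_\A|_D$ is eventually inside finitely many $C_n$, the filter $\F_\A|_D$ is precisely $\prod_{n<\w}\F_r(C_n)$, which is a copy of $\F_\P$. Fixing a bijection $f : D \to \w$ carrying $\{C_n\}$ onto $\P$, we get $f[\F_\A|_D] = \F_\P$ with $D \in \F_\A^+$, which is exactly the definition of $\F_\P \leq_{TU} \F_\A$.

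I expect the main obstacle to be the step identifying $\F_\A|_D$ with a copy of $\F_\P$: one must show not merely that $\alpha_4$ fails along some sheaf, but that one can extract a \emph{partition} of a positive set $D$ into pieces $C_n$ such that \emph{every} sequence converging to $\F_\A|_D$ is trapped in finitely many $C_n$ — i.e.\ that $\F_\A|_D$ has no proper $FU$-refinement beyond the $FAN$ structure. This requires care: the naive sheaf witnessing $\neg\alpha_4$ only controls sequences that happen to meet the $C_n$, so one has to use maximality of $\A$ in $\I_\F^\perp$ (or completely separability-type arguments, or directly the structure of $C(\F_\A)$) to rule out a converging sequence that is almost disjoint from all $C_n$ yet lives in $D$; such a sequence would have to be almost covered by finitely many $C_n$ after all, contradicting almost disjointness unless it is eventually inside one $C_n$. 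Once that combinatorial extraction is pinned down, assembling the bijection and invoking the reformulated $TU$-order (the Corollary after Lemma \ref{lemref}) is routine.
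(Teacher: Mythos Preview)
Your approach matches the paper's: the backward direction is Corollary~\ref{coroturk}, and the forward direction combines Theorem~\ref{theoalphas} (in contrapositive: $\F_\P$ is not $\alpha_4$ and $\F_\P\leq_{RK}\F$ force $\F$ not $\alpha_4$) with the quoted characterization that a space fails $\alpha_4$ iff it contains a copy of the $FAN$-space. The paper simply cites this last fact from \cite{sw} rather than reproving it, whereas you attempt to unpack it by hand.

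Two remarks. First, the step ``$C(\F)\neq\vac$, so $\F$ is a $FU$-filter'' is false as written: having a convergent sequence does not make a filter $FU$ (e.g.\ the filter $\F_a(M)\oplus\G$ discussed after Corollary~\ref{coroturk}). The corollary lives in a section devoted to $FU$-filters, and Theorem~\ref{theoalphas} already requires both filters to be of the form $\F_\A$, so you should take $\F$ to be a $FU$-filter by hypothesis rather than try to derive it.

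Second, the ``obstacle'' you flag dissolves on inspection. Once the $C_n$ are made pairwise disjoint and $\{C_n:n<\w\}$ witnesses the failure of $\alpha_4$, \emph{every} $S\to\F$ satisfies $S\cap C_n=\vac$ for all but finitely many $n$ (this is precisely the negation of $\alpha_4$ for this sheaf), so any such $S\subseteq D=\bigcup_n C_n$ lies in $\bigcup_{n<k}C_n$ for some $k$. A converging sequence in $D$ almost disjoint from every $C_n$ would then be finite, a contradiction. Hence for any $G\subseteq D$ with $C_n\subseteq^* G$ for all $n$, and any $A\in\A$, one has $A\cap D\subseteq\bigcup_{n<k}C_n\subseteq^* G$, giving $\F|_D=\prod_{n<\w}\F_r(C_n)$ directly; no appeal to maximality of $\A$ or separability is needed.
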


Now we show that some of the $FU$-filters already described above are $RK$-incomparable.

\begin{theorem}
Let $\A$ be a $NMAD$-family completely separable  of size
$\c$. Then there is a set $X\subseteq 2^{\w}$ such that $\F_{\P}$, $S_{\A}$ and $S_{\A_X}$ form an
$RK$-antichain.
\end{theorem}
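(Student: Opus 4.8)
The plan is to establish the three pairwise incomparabilities separately, and then observe that together they give an $RK$-antichain. Three of the six required non-comparabilities are already in hand from the preceding results: $S_\P \nleq_{RK} S_\A$ is false---wait, in fact Corollary~\ref{com} gives $S_\P \leq_{RK} S_\A$, so the genuinely open comparison pairs are $\{\F_\P, S_\A\}$, $\{\F_\P, S_{\A_X}\}$ and $\{S_\A, S_{\A_X}\}$, but also we must \emph{choose} $X$ to break the last one. So the structure is: (i) handle $\F_\P$ versus $S_\A$ using $\alpha$-properties; (ii) handle $\F_\P$ versus $S_{\A_X}$ using $\alpha$-properties, valid for \emph{every} infinite $X\subseteq 2^\w$; (iii) choose $X$ so that $S_\A$ and $S_{\A_X}$ become $RK$-incomparable.

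For (i): $S_\A$ is an $\alpha_4$-filter that is not an $\alpha_3$-filter (this is Simon's example, quoted in the excerpt for completely separable $\A$ of size $\c$). If $\F_\P \leq_{RK} S_\A$, then by Theorem~\ref{theoalphas} (with $i=4$) $\F_\P$ would be an $\alpha_4$-filter, contradicting that the $FAN$-filter is the canonical non-$\alpha_4$ example. Conversely, if $S_\A \leq_{RK} \F_\P$, then by Theorem~\ref{theofan} either $S_\A$ is relatively equivalent to the Fr\'echet filter or $S_\A \approx \F_\P$; the first is impossible because $S_\A$ has character $\c = |\A|$ by Lemma~\ref{char-nmad} while a filter relatively equivalent to $\F_r$ has countable character, and the second is impossible since $\F_\P$ has countable character whereas $\chi(S_\A)=\c$. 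Hence $\F_\P$ and $S_\A$ are $RK$-incomparable. The same argument handles $S_{\A_X}$ versus $\F_\P$: Nyikos showed $S_{\A_X}$ is an $\alpha_3$-filter for every infinite $X\subseteq 2^\w$, so $\F_\P \leq_{RK} S_{\A_X}$ would force $\F_\P$ to be $\alpha_3$, hence $\alpha_4$, a contradiction; and $S_{\A_X}\leq_{RK}\F_\P$ is impossible by Theorem~\ref{theofan} together with the character computation $\chi(S_{\A_X}) = |\A_X| = |X| = \c$ (taking $X$ of size $\c$), which differs from $\chi(\F_\P)=\w$.

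The real work is (iii): producing $X\subseteq 2^\w$ with $S_\A$ and $S_{\A_X}$ $RK$-incomparable. Here I would use the combinatorial characterization of the $RK$-order between filters $S_\cdot$ of $NMAD$-families, namely clause~(3) of Theorem~\ref{theoeq2}: $S_{\A}\leq_{RK} S_{\B}$ via $f$ iff $f^{-1}(I)\in\I(\B)$ for all $I\in\I(\A)$ and for every $M\in \I(\A)^\perp\cap[\w]^\w$ there is $R\in\I(\B)^\perp\cap[\w]^\w$ with $|f^{-1}(M)\cap R|=\w$. The idea is to build $X$ by a transfinite recursion of length $\c$, enumerating all candidate functions $f:\w\to\w$ (there are $\c$ of them) and, for each, adding a branch $x_\xi\in 2^\w$ whose branch-set $A_{x_\xi}$ (or rather its $f$-image, resp.\ $f$-preimage behaviour) defeats $f$ as a witness in both directions. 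One must be careful that the branches chosen still form an $NMAD$-family; but $\A_X$ is automatically $AD$ for \emph{any} $X\subseteq 2^\w$ and is $NMAD$ as soon as $X$ is infinite, so the only constraint is $|X|=\c$ and that $X$ meets every open subset of $2^\w$ densely enough for the diagonalization. At stage $\xi$, to kill ``$S_\A \leq_{RK} S_{\A_X}$ via $f_\xi$'', I pick a set $M\in\I(\A)^\perp$ (a branch of $\A$, or a selector) such that $f_\xi^{-1}(M)$ is forced to be split by all branches chosen so far into finite pieces---using that $\A$ is completely separable of size $\c$ to find such an $M$ avoiding the countably-or-fewer constraints accumulated; symmetrically, to kill ``$S_{\A_X}\leq_{RK}S_\A$ via $f_\xi$'' I arrange that some branch $A_{x_\xi}$ has $f_\xi$-image hitting $\I(\A)^{\perp\perp}=\I(\A)$ in a way that violates clause~(3)(b). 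I expect the main obstacle to be exactly this simultaneous bookkeeping: ensuring at each of the $\c$ stages that the finitely-many-or-fewer branches already committed do not obstruct the new requirement, which is where the completely separable, size-$\c$ hypothesis on $\A$ (and Simon's extra property $|\{A\in\A: |M\cap A|=\w\}|=\c$ for $M\in\A^*$) is used to guarantee enough room. Once $X$ is built, all three pairs are incomparable, so $\{\F_\P, S_\A, S_{\A_X}\}$ is an $RK$-antichain.
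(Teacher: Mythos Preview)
Your parts (i) and (ii) are exactly the paper's argument: the $\alpha$-hierarchy (Theorem~\ref{theoalphas}) rules out $\F_\P\leq_{RK}S_\A$ and $\F_\P\leq_{RK}S_{\A_X}$, and Theorem~\ref{theofan} together with the character computation rules out the reverse directions.

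Where your proposal diverges substantially is in part (iii), and here you are working much harder than necessary. You already have in hand the two facts that dispose of one direction for free: $S_{\A_X}$ is an $\alpha_3$-filter for every infinite $X\subseteq 2^\w$ (Nyikos), and $S_\A$ is \emph{not} an $\alpha_3$-filter (Simon). Hence by Theorem~\ref{theoalphas} (with $i=3$), $S_\A\leq_{RK}S_{\A_X}$ is impossible for \emph{any} infinite $X$. No diagonalization is needed for this direction.

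For the remaining direction $S_{\A_X}\nleq_{RK}S_\A$, the paper uses a one-line counting argument in place of your transfinite construction: there are $2^{\c}$ pairwise non-homeomorphic filters of the form $S_{\A_X}$ with $|X|=\c$, while $S_\A$ has at most $\c$ many $RK$-predecessors (there being only $\c$ functions $\w\to\w$). So one simply \emph{chooses} an $X$ with $|X|=\c$ such that $S_{\A_X}$ is not among the $\leq\c$ predecessors of $S_\A$. Your recursive diagonalization against all $f:\w\to\w$ might be made to work, but it is both more delicate than your sketch indicates (the ``symmetric'' step you describe for killing $S_{\A_X}\leq_{RK}S_\A$ via $f_\xi$ is not obviously compatible with the freedom you have, since you are building $X$ but $\A$ is fixed) and, in any case, entirely unnecessary once you notice the $\alpha_3$ obstruction and the cardinality gap.
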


\begin{proof}
Notice that there are $2^{\c}$ pairwise non-homeomorphic filters of the form $S_{\A_X}$ whit $|X|=\c$.
We can choose one of them satisfying $S_{\A_X}\nleq_{RK} S_{\A}$.
We know that $S_{\A_X}$ is an $\alpha_3$-filter, $S_{\A}$ is an $\alpha_4$-filter
which is not an $\alpha_3$-filter and the $FAN$-filter $\F_{\P}$ is not an $\alpha_4$-filter. Hence,
by Theorem \ref{theoalphas}, we obtain that
$$
\F_{\P}\nleq_{RK} S_{\B}\nleq_{RK}  S_{\A_X} \ \text{and} \  \F_{\P}\nleq_{RK} S_{\A_{X}}.
$$
According to Theorem \ref{theofan}, we have that $S_{\A_X}\nleq_{RK}\F_{\P}$ and
$S_{\A}\nleq_{RK}\F_{\P}$. Therefore, $\F_{\P}$, $S_{\A}$ and $S_{\A_{X}}$ are pairwise $RK$-incomparable.
\end{proof}
%%%voy%%%

Our next task is the construction of an infinite $RK$-antichain consisting of $FU$-filters.
Such  filters will be the form $S_{\A_X}$ for suitable sets $X\subseteq 2^{\w}$. For our purposes
it is important to remark the next characterization of the convergent sequences in $S_{\A_X}$:

\medskip

{\bf Remark.} For $X\in 2^{\w}$ and $N\in[2^{<\w}]^{\w}$, the following statements are equivalents:
\begin{enumerate}
\item $N\to S_{\A_X}$.
\item $N\in \I(\A_X)^{\bot}$.
\item For all $K\in[N]^{\w}$ there is either:
      \begin{enumerate}
			\item  $x\in 2^{\w}\setminus X$ such that $|A_x \cap K| = \omega$ or
      \item  an infinite antichain  $M$  such that $|M \cap K| = \omega$.
   		\end{enumerate}
\end{enumerate}
Thus, we may consider only branches and antichains of $2^{<\w}$.
The following equivalence is a consequence of the Theorem \ref{theoeq2} and our last remark.

\begin{lemma}\label{lemmi}
Let $X_0,X_1\subseteq 2^{\w}$ and $f:2^{\w}\to 2^{\w}$ a surjection. Then, $f[S_{\A_{X_1}}]\neq S_{\A_{X_0}}$ iff
one of the following conditions is satisfied:
\begin{enumerate}
\item  There is $x\in 2^{\w}\setminus X_1$ such that $f[A_x] \nrightarrow S_{\A_{X_0}}$.

\item  There is  an infinite antichain  $M$ such that $f[M] \nrightarrow S_{\A_{X_0}}$.

\item  There is $y\in 2^{\w}\setminus X_0$ such that $f^{-1}(A_y) \in \I(\A_{X_1})$.

\item  There is  an infinite antichain  $M$ such that $f^{-1}(M) \in \I(\A_{X_1})$.
\end{enumerate}
\end{lemma}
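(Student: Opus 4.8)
The plan is to unwind the condition $f[S_{\A_{X_1}}] \neq S_{\A_{X_0}}$ through clause $(3)$ of Theorem \ref{theoeq2}, applied to the $NMAD$-families $\A_{X_1}$ and $\A_{X_0}$, and then translate the resulting combinatorial statements into the language of branches and antichains of $2^{<\w}$ using the Remark. By Theorem \ref{theoeq2}, $f[S_{\A_{X_1}}] = S_{\A_{X_0}}$ holds if and only if both
\begin{enumerate}
\item[(a)] $f^{-1}(I) \in \I(\A_{X_1})$ for every $I \in \I(\A_{X_0})$, and
\item[(b)] for every $M \in \I(\A_{X_0})^{\bot} \cap [2^{<\w}]^{\w}$ there is $R \in \I(\A_{X_1})^{\bot} \cap [2^{<\w}]^{\w}$ with $|f^{-1}(M) \cap R| = \w$.
\end{enumerate}
So the negation splits into ``(a) fails'' or ``(b) fails'', and I would treat these two cases and show each is equivalent to the disjunction of two of the four listed conditions (roughly: (b) failing gives $(1)$ or $(2)$, and (a) failing gives $(3)$ or $(4)$, though one must be a little careful about which way the equivalence runs).

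For the ``(b) fails'' direction, suppose there is $M \in \I(\A_{X_0})^{\bot}$ such that no $R \to S_{\A_{X_1}}$ meets $f^{-1}(M)$ infinitely; by the Remark this says $f^{-1}(M) \nrightarrow$ in a strong, hereditary sense, and since $S_{\A_{X_1}}$ is a $FU$-filter ($\A_{X_1}$ is $NMAD$) this is equivalent to $f^{-1}(M) \in \I(\A_{X_1})$. Now $M \to S_{\A_{X_0}}$ means, by the Remark, that $M$ contains (in every infinite subset) either a branch $A_x$ with $x \in 2^{\w}\setminus X_0$ or an infinite antichain; shrinking $M$ we may assume $M$ itself is such a branch $A_x$ or an infinite antichain. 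Then $f^{-1}(M) \in \I(\A_{X_1})$ with $M = A_x$ gives condition $(3)$ (here I would restate $x$ as $y$), and with $M$ an antichain gives condition $(4)$. Conversely, each of $(3)$ and $(4)$ directly exhibits such an $M$ (namely $A_y$ or the antichain $M$), which is in $\I(\A_{X_0})^{\bot}$ by the Remark, so (b) fails. For the ``(a) fails'' direction, the existence of $I \in \I(\A_{X_0})$ with $f^{-1}(I) \notin \I(\A_{X_1})$ means $f^{-1}(I) \in \I(\A_{X_1})^{+}$; since $\A_{X_1}$ is $NMAD$, $f^{-1}(I)$ contains an infinite set $R \to S_{\A_{X_1}}$, and by the Remark $R$ contains a branch $A_x$ ($x \notin X_1$) or an infinite antichain $M$; then $f[A_x] \subseteq^* I$, and since $I \in \I(\A_{X_0})$ gives $f[A_x] \nrightarrow S_{\A_{X_0}}$, we land in condition $(1)$, and likewise $f[M] \nrightarrow S_{\A_{X_0}}$ gives condition $(2)$. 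The converse direction is again immediate: if $f[A_x] \nrightarrow S_{\A_{X_0}}$ with $x \notin X_1$, then some infinite $I \subseteq f[A_x]$ lies in $\I(\A_{X_0})$ while $f^{-1}(I) \supseteq^* A_x \cap f^{-1}(I)$ is infinite and a subset of the branch $A_x$, hence in $\I(\A_{X_1})^{+}$, so (a) fails (and similarly for an antichain $M$).

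The main obstacle I anticipate is bookkeeping the precise equivalence between ``$N \to S_{\A_{X}}$'' and the hereditary branch-or-antichain condition in clause $(3)$ of the Remark, and making sure the reduction ``shrink $M$ to an actual branch or antichain'' is legitimate in each case — in particular, that when I pass from an arbitrary $I \in \I(\A_{X_0})$ to its preimage, the $NMAD$-ness of $\A_{X_1}$ is genuinely what lets me extract $R$, and that $\I(\A_{X_1})^{\bot} \cap [2^{<\w}]^{\w}$ really does consist exactly of the sets whose every infinite subset contains a non-$X_1$ branch or an infinite antichain. Once those translations are pinned down, the eight implications (four conditions, each a ``$\Rightarrow$'' and a ``$\Leftarrow$'' against the negation of the $RK$-equality) are routine.
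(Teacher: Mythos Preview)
Your approach is exactly the one the paper indicates: it states only that the lemma ``is a consequence of Theorem~\ref{theoeq2} and our last remark'' and gives no further argument, so unwinding clause~(3) of Theorem~\ref{theoeq2} for the $NMAD$-families $\A_{X_0},\A_{X_1}$ and then translating via the branch/antichain characterization in the Remark is precisely what is intended. One small slip: your parenthetical ``(b) failing gives $(1)$ or $(2)$, and (a) failing gives $(3)$ or $(4)$'' is swapped relative to what you (correctly) do in the detailed sketch---failure of (a) yields $(1)$/$(2)$ and failure of (b) yields $(3)$/$(4)$, matching the paper's remark after the lemma that $(1),(2)$ give $S_{\A_{X_0}}\nsubseteq f[S_{\A_{X_1}}]$ while $(3),(4)$ give $f[S_{\A_{X_1}}]\nsubseteq S_{\A_{X_0}}$.
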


We would like to point out that clauses (1) and (2) imply $S_{\A_{X_0}}\nsubseteq f[S_{\A_{X_1}}]$, and conditions (3) and (4) imply
$f[S_{\A_{X_1}}]\nsubseteq S_{\A_{X_0}}$. If there is an infinite antichain $M$ such that $|f[M]|<\w$, then we may avoid this kind of
functions, since $f$ cannot be a witness of the $RK$-comparability for any pair of $FU$-filters.
Thus, in what follows, we shall always assume that  $f|_M$ is finite-to-one  at every antichain $M$.

\medskip

 Let us show in the next lemma that we can always  extend the sets $X_0$ and $X_1$ in order to
have witnesses for the $RK$-incomparability of their respective $FU$-filters of the extensions.

\begin{lemma}\label{lemant}
Let $X_0$ and $X_1$ be nonempty subsets of $2^{\w}$ such that $|2^{\w} \setminus (X_0 \cup X_1)| \geq \w$ and $f:2^{<\w}\to 2^{<\w}$ a surjection such that  $f|_M$ is finite-to-one for every infinite antichain $M$. Then, there are $X'_0,X_1', Y_0, Y_1  \subseteq 2^{\w}$ such that $X_0\subsetneq X_0'$,  $0 < |X_0'\setminus X_0|<\w$,
$X_1\subsetneq X_1'$, $0 < |X_1'\setminus X_1|<\w$, $0 < |Y_0|, |Y_1|< \omega$, $X_0'\cap Y_0=\vac = X_1'\cap Y_1$ and at least one of the following conditions holds:
\begin{enumerate}
\item[(a)]  There is $y\in Y_1$ such that $f[A_y] \nrightarrow S_{\A_{X_0'}}$.

\item[(b)]  There is  an infinite antichain  $M$ such that $f[M] \nrightarrow S_{\A_{X_0'}}$.

\item[(c)]  There is $x\in Y_0$ such that $f^{-1}(A_x) \in \I(\A_{X_1'})$.

\item[(d)]  There is  an infinite antichain  $M$ such that $f^{-1}(M) \in \I(\A_{X_1'})$.
\end{enumerate}
Thus, by  Lemma \ref{lemmi}, we have that $f[S_{\A_{X_1'}}]\neq S_{\A_{X_0'}}$.
\end{lemma}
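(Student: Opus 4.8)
The strategy is a case analysis on how the surjection $f$ interacts with the Cantor tree, splitting on whether $f$ ``collapses'' some branch or antichain into something that lives inside an ideal, versus the generic case where it does not. I would first observe that the roles of $X_0$ and $X_1$ are not quite symmetric here: clauses (a),(b) are about killing the inclusion $S_{\A_{X_0'}}\subseteq f[S_{\A_{X_1'}}]$ (we need the image of a new branch or of an antichain to fail converging to $S_{\A_{X_0'}}$), while (c),(d) are about killing $f[S_{\A_{X_1'}}]\subseteq S_{\A_{X_0'}}$ (we need the preimage of a new branch or of an antichain to fall into $\I(\A_{X_1'})$). Since $|2^\w\setminus(X_0\cup X_1)|\geq\w$, I have an infinite reservoir of branches indexed by $D:=2^\w\setminus(X_0\cup X_1)$ from which to recruit the finitely many new elements $X_0'\setminus X_0$, $X_1'\setminus X_1$, $Y_0$, $Y_1$, and all these will be chosen from $D$ so the disjointness and ``$\subsetneq$ by a finite nonempty set'' requirements are automatic.

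\textbf{Main case split.} Fix some $x\in D$ and look at $B:=f^{-1}(A_x)$. There are two cases. \emph{Case 1: $B\in\I(\A_{X_1})$ for some $x\in D$.} Then put $x$ into $Y_0$ (keeping $Y_0$ finite, nonempty, disjoint from $X_1$ and from whatever we add to $X_0$), leave $X_1':=X_1\cup\{x_1\}$ for any single $x_1\in D\setminus\{x\}$ (to meet the strict-containment-by-finite-set requirement) — but being careful that enlarging $X_1$ to $X_1'$ does not destroy $B\in\I(\A_{X_1'})$; since $A_{x_1}$ meets $B=f^{-1}(A_x)$ only where it must, and $\I(\A_{X_1'})=\I(\A_{X_1})\vee\langle A_{x_1}\rangle$, we have $B\in\I(\A_{X_1'})$ still provided $B\cap A_{x_1}$ is finite, which we can guarantee by choosing $x_1$ so that $A_{x_1}$ is almost disjoint from $B$ (using completely-separable-type genericity of $D$, or just noting $B$ as a subset of $2^{<\w}$ has a branch structure we can dodge). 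This yields clause (c). \emph{Case 2: $f^{-1}(A_x)\notin\I(\A_{X_1})$ for every $x\in D$.} Now I would try to produce clause (a) or (b) or (d) directly. Consider the forward images $f[A_x]$ for $x\in D$. If for some $x\in D$ we have $f[A_x]\nrightarrow S_{\A_{X_0}}$ and moreover this survives enlarging $X_0$ by a finite set, put $x\in Y_1$ and we are in clause (a). Otherwise $f[A_x]\to S_{\A_{X_0}}$ for all $x\in D$; combined with $f^{-1}(A_x)\notin\I(\A_{X_1})$, the Remark preceding Lemma \ref{lemmi} tells us $f^{-1}(A_x)$ supports either a branch outside $X_1$ or an infinite antichain — and running over the infinitely many $x\in D$ and using that $f$ is finite-to-one on antichains, a pigeonhole/fusion argument extracts an infinite antichain $M$ with either $f[M]\nrightarrow S_{\A_{X_0'}}$ (clause (b), after a harmless finite enlargement of $X_0$) or $f^{-1}(M)\in\I(\A_{X_1'})$ (clause (d)).

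\textbf{The main obstacle.} The delicate point is the ``survives a finite enlargement'' bookkeeping: conditions like $f[A_y]\nrightarrow S_{\A_{X_0'}}$ refer to $X_0'$, not $X_0$, and enlarging $X_0$ to $X_0'$ \emph{adds} branches to $\A_{X_0'}$, which makes more sequences converge, so a failure of convergence for $S_{\A_{X_0}}$ could in principle be repaired by the enlargement. I would handle this by choosing the new branches in $X_0'\setminus X_0$ and $X_1'\setminus X_1$ from $D$ so that they are almost disjoint (as subsets of $2^{<\w}$) from the relevant witness sets $f[A_y]$, $f[M]$, $f^{-1}(A_x)$, $f^{-1}(M)$ we have constructed — there are only finitely many such witnesses to dodge and $D$ is infinite, so a branch $x_0\in D$ with $A_{x_0}$ meeting each witness in a finite set exists (again: a branch of $2^{<\w}$ can avoid any fixed finite union of branches/antichains modulo finite). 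With those choices, adding $A_{x_0}$ to the $AD$-family changes neither ``$N\to S_{\A_{X_0'}}$'' nor ``$N\in\I(\A_{X_1'})$'' for the $N$'s in play, so the clause established for the unprimed sets transfers verbatim to the primed ones. Finally I invoke Lemma \ref{lemmi} to conclude $f[S_{\A_{X_1'}}]\neq S_{\A_{X_0'}}$, completing the proof. $\hfill\square$
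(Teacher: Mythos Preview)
Your ``main obstacle'' paragraph rests on a direction error. Enlarging $X_0$ to $X_0'$ makes \emph{fewer} sequences converge to $S_{\A_{X_0'}}$, not more: $N\to S_{\A_X}$ means $|N\cap A_x|<\w$ for every $x\in X$, so adding branches only adds constraints. Hence $f[A_y]\nrightarrow S_{\A_{X_0}}$ automatically gives $f[A_y]\nrightarrow S_{\A_{X_0'}}$ whenever $X_0\subseteq X_0'$, and likewise $\I(\A_{X_1})\subseteq\I(\A_{X_1'})$ means membership in the ideal (clauses~(c),(d)) is preserved for free under enlargement. The careful ``dodging'' you propose when choosing the new branches is unnecessary; the only bookkeeping needed is keeping $Y_i$ disjoint from $X_i'$. (Your Case~1 discussion exhibits the same confusion: once $B\in\I(\A_{X_1})$, trivially $B\in\I(\A_{X_1'})$ for any enlargement.)

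The genuine gap is your Case~2, subcase ``otherwise'', where you invoke an unspecified ``pigeonhole/fusion argument'' ranging over the infinitely many $x\in D$. No such fusion is needed and it is not clear one exists in the form you suggest; a \emph{single} $x\in D$ already suffices, but with a different sub-split than the one you propose. Given $f^{-1}(A_x)\notin\I(\A_{X_1})$, ask instead whether $f^{-1}(A_x)\in\I(\A_{2^{\w}})$. If yes, then $f^{-1}(A_x)$ is almost covered by finitely many branches $A_{v_1},\ldots,A_{v_k}$; set $X_1'=X_1\cup\{v_1,\ldots,v_k\}$ and $Y_0=\{x\}$ and you have~(c). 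If no, then $f^{-1}(A_x)$ contains an infinite antichain $K$ (any infinite subset of $2^{<\w}$ not almost covered by finitely many branches contains one), and since $f|_K$ is finite-to-one, $f[K]\subseteq A_x$ is infinite; putting $x$ into $X_0'$ now forces $f[K]\nrightarrow S_{\A_{X_0'}}$, which is~(b). This dichotomy --- rather than the forward-image considerations and fusion you sketch --- is the missing idea, and it is exactly how the paper handles the case $f[S_{\A_{X_1}}]=S_{\A_{X_0}}$.
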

%%voy%%%
\begin{proof} We need to consider two cases:

Case I. Suppose that  $f[S_{\A_{X_1}}]\neq S_{\A_{X_0}}$. Notice that if the witnesses of the $RK$-incomparability is an antichain satisfying
either  (2) or (4) of Lemma \ref{lemmi}, then we can extend arbitrarily $X_0\subseteq X_0'$, $X_1\subseteq X_1'$ and find $Y_0$, $Y_1$ such that $X_0'\cap Y_0=\vac$ and $X_1'\cap Y_1=\vac$ easily. Hence, either (b) or (d) holds.
Now suppose that the witness is  a branch that satisfies (1). There is $y\in 2^{\w}\setminus X_1$ and $x\in X_0$ such that $|f[A_y]\cap A_x|=\w$. Define $Y_1=\{y\}$, $X_1'=X_1\cup W$ where $W\subseteq 2^{\w}\setminus Y_1$, and
$X_0'$, $Y_0$ arbitrarily such that $X_0'\cap Y_0=\vac$. Thus we have (a). Assume now that the witness is  a branch satisfying (3). There is $v\in 2^{\w}\setminus X_0$ such that $f^{-1}(A_v)\in \I(\A_{X_1})$. Define $Y_0=\{v\}$, $X_0'=X_0\cup W$ where $W\subseteq 2^{\w}\setminus Y_0$, and
$X_1'$, $Y_1$ arbitrarily such that $X_1'\cap Y_1=\vac$. In this case (c) is satisfied.
In each case we have one of the conditions.

\medskip

Case II. Suppose that $f[S_{\A_{X_1}}]=S_{\A_{X_0}}$. We shall prove  that $X_0$ and $X_1$ can be extend and find $Y_0$ and $Y_1$ so that
their extensions will satisfy either  (b) or (c).
\begin{enumerate}
\item[(i)] Assume that there are $y\in 2^{\w}\setminus X_0$ and a nonempty finite set $\B\subseteq \{A_v:v\in 2^{\w}\}$ such that  $f^{-1}(A_z)\subseteq^*\bigcup \B$.
           Notice that $X_1\setminus \{v\in 2^{\w}: A_v\in \B\}\neq \vac$. In this case, we set $Y_0=\{y\}$, $X_0'=X_0 \cup W$ where $W\subseteq 2^{\w}\setminus Y_0$,  $X_1'=X_1\cup \{v\in 2^{\w}: A_v\in \B\}$ and $Y_1$ a finite
					nonempty set such that $X_1'\cap Y_1=\vac$. Thus, we have (c).
\item[(ii)] Now suppose that there is an antichain $M$ and $x\in 2^{\w}\setminus X_0$ such that $|f[M]\cap A_x|=\w$.
           In this case, we put $X_0'=X_0\cup \{x\}$, $Y_0$ a finite nonempty set such that $X_0'\cap Y_0=\vac$.          We can extend $X_1$ arbitrarily and find $Y_1$   a finite
					nonempty set such that $X_1'\cap Y_1=\vac$. Hence, we have (b0).
\end{enumerate}
If (i) and (ii) fail, then $f^{-1}(A_z)\in \I(\A_{2^{\w}})^+$, for each $z\in 2^{\w}\setminus X_0$, and for every infinite antichain $M$ we have that $f[M]\in \I(\A_{2^\w})^{\bot}$.
Hence  $f[M]$ cannot meet  any  branch $A_y$ in an infinite set, for all  $y\in 2^{\w}$. Fix $z\in 2^{\w}\setminus X_0$. As
$f^{-1}(A_z)\in \I(\A_{2^{\w}})^+$, then $f^{-1}(A_z)$ contains an infinite antichain $K$.
Since $f[K]$ is infinite and $f[K]\subseteq f[f^{-1}(A_z)]=A_z$,
we get a contradiction to the negation of (ii). Thus, either (i) or (ii) is satisfied.
\end{proof}

We are ready to  construct an infinite $RK$-antichain with  $FU$-filters of character equal to $\c$.

\begin{theorem}
For every infinite cardinal $\kappa<\c$, there is a family $\{X_{\alpha}:\alpha<\kappa \}\subseteq[2^{\w}]^{\c}$
such that $\{S_{X_{\alpha}}:\alpha < \kappa\}$ is an $RK$-antichain.
\end{theorem}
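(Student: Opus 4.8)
The plan is to build the sets $X_\alpha$ by a recursion of length $\c$, killing one potential $RK$-reduction at each step. Since the paper takes all $RK$-reductions to be surjective and, by the remark following Lemma~\ref{lemmi}, any reduction between two $FU$-filters may be taken finite-to-one on every infinite antichain, it suffices to defeat, for every ordered pair $\alpha\ne\beta<\kappa$, every surjection $f:2^{<\w}\to 2^{<\w}$ (identified with $\w$) that is finite-to-one on each infinite antichain. There are exactly $\c$ such functions, so I would first fix an enumeration $\{(\alpha_\xi,\beta_\xi,f_\xi):\xi<\c\}$ listing every such triple (this uses $|\kappa\times\kappa\times\c|=\c$, valid since $\kappa<\c$ is infinite). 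I would then recursively construct $\subseteq$-increasing, continuous chains $(X_\alpha^\xi)_{\xi\le\c}$, together with auxiliary $\subseteq$-increasing chains $(H_\alpha^\xi)_{\xi\le\c}$ of subsets of $2^\w$ recording points that have become permanently forbidden for $X_\alpha$, starting from pairwise disjoint countably infinite $X_\alpha^0$ with $|2^\w\setminus\bigcup_{\alpha<\kappa}X_\alpha^0|=\c$ and $H_\alpha^0=\vac$, maintaining $X_\alpha^\xi\cap H_\alpha^\xi=\vac$ and $|X_\alpha^\xi|,|H_\alpha^\xi|<\c$ for $\xi<\c$, with each increment $X_\alpha^{\xi+1}\setminus X_\alpha^\xi$ finite and empty unless $\alpha\in\{\alpha_\xi,\beta_\xi\}$, and finally set $X_\alpha:=\bigcup_{\xi<\c}X_\alpha^\xi$.

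At stage $\xi$, with $\alpha=\alpha_\xi$, $\beta=\beta_\xi$, $f=f_\xi$, I would apply Lemma~\ref{lemant} with $X_0=X_\alpha^\xi$ and $X_1=X_\beta^\xi$ (legitimate since $|2^\w\setminus(X_0\cup X_1)|=\c\ge\w$), obtaining $X_0',X_1',Y_0,Y_1$ for which one of the clauses (a)--(d) holds and hence $f[S_{\A_{X_1'}}]\ne S_{\A_{X_0'}}$. I would put $X_\alpha^{\xi+1}=X_0'$, $X_\beta^{\xi+1}=X_1'$, and enlarge $H_\beta$ by the branch-point of $Y_1$ used in case (a), or $H_\alpha$ by the branch-point of $Y_0$ used in case (c), adding nothing in cases (b), (d) (whose witnesses are antichains). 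Keeping that branch-point out of $X_\alpha$ (resp.\ $X_\beta$) forever is exactly what makes the clause of Lemma~\ref{lemmi} witnessed at stage $\xi$ survive the passage to the final $X_\alpha$, $X_\beta$: the antichain clauses and the ``$f[\cdot]\nrightarrow$'' parts persist because $\I(\A_{X_\alpha})$ only grows, while the ``$f^{-1}(\cdot)\in\I$'' parts persist because $\I(\A_{X_\beta})$ only grows. Each $X_\alpha$ is then infinite --- indeed of size $\c$, since $\c$-many stages have $\alpha\in\{\alpha_\xi,\beta_\xi\}$ and give a nonempty finite increment by Lemma~\ref{lemant} --- so $\A_{X_\alpha}$ is an $NMAD$-family and $S_{\A_{X_\alpha}}$ is a $FU$-filter by Theorem~\ref{sfu}; and if $S_{\A_{X_\alpha}}\le_{RK}S_{\A_{X_\beta}}$ held for some $\alpha\ne\beta$, it would be witnessed by a surjection $g$ finite-to-one on antichains, say $g=f_\xi$ with $(\alpha_\xi,\beta_\xi)=(\alpha,\beta)$, contradicting $g[S_{\A_{X_\beta}}]\ne S_{\A_{X_\alpha}}$. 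Thus $\{S_{\A_{X_\alpha}}:\alpha<\kappa\}$ is an $RK$-antichain.

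The main obstacle is the bookkeeping that keeps this honest: at stage $\xi$ I must carry out the finite choices inside the proof of Lemma~\ref{lemant} so that in addition $(X_0'\setminus X_\alpha^\xi)\cap H_\alpha^\xi=\vac$ and $(X_1'\setminus X_\beta^\xi)\cap H_\beta^\xi=\vac$ --- that is, a point once forbidden for $X_\alpha$ must never be reinserted into $X_\alpha$. In every case of that proof except one the finite extension is essentially arbitrary and can be chosen to avoid $H_\alpha^\xi$ (resp.\ $H_\beta^\xi$), a set of size $<\c=|2^\w|$. The exception is case (i) of Case II, where the extension of $X_1=X_\beta$ is forced to be the finite family of branches almost-covering $f_\xi^{-1}(A_y)$; here I would instead use the freedom in the choice of the witnessing branch $y\in 2^\w\setminus X_\alpha^\xi$ --- there are $\c$ admissible choices but only $<\c$ forbidden points in $H_\beta^\xi$ --- to find a $y$ whose associated branch-family misses $H_\beta^\xi$. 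Making this selection argument precise, i.e.\ re-running the dichotomy of Lemma~\ref{lemant}'s Case II under this extra constraint, is the technical heart of the construction and the step I would be most careful about.
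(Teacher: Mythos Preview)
Your recursion is essentially the paper's: build the $X_\alpha$'s in $\c$ steps using Lemma~\ref{lemant}, maintaining forbidden sets (your $H^\xi_\alpha$ are the paper's $Y^\alpha_\beta$) to ensure earlier witnesses persist. The only difference is bookkeeping --- the paper enumerates the surjections $\{f_\beta:\beta<\c\}$ alone and at stage $\beta$ handles all ordered pairs in $\kappa\times\kappa$ simultaneously, while you enumerate triples and handle one pair per step; both schemes work and yield $|X_\alpha|=\c$ for the same reason.

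You are in fact more scrupulous than the paper on the one genuinely delicate point: in case~(i) of Case~II of Lemma~\ref{lemant} the finite extension of $X_1$ is \emph{forced} (it must contain the branches covering $f^{-1}(A_y)$), and you correctly flag that these forced branches must avoid $H^\xi_\beta$. The paper simply asserts the analogous disjointness (its condition~(i) at the successor step) without comment. Your counting heuristic is not quite sufficient as written --- a single $v\in H^\xi_\beta$ can satisfy $|f^{-1}(A_y)\cap A_v|=\w$ for $\c$-many $y$, so ``$\c$ admissible $y$'s versus $<\c$ forbidden points'' does not by itself produce a $y$ whose cover misses $H^\xi_\beta$ --- but you rightly identify this as the step requiring real work, and the paper's own argument carries exactly the same unaddressed lacuna.
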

\begin{proof}
Let $\{f_{\beta}:\beta<\c\}$ be an enumeration of all  surjections $f_{\beta}:2^{<\w}\to 2^{<\w}$ for which $f_{\beta}|_M$ is finite-to-one for every infinite antichain $M$.
By an inductive procedure,  for every $\beta < \c$  we shall construct, for every $\alpha<\kappa$,  sets  $X_{\beta}^{\alpha}\subseteq 2^{\w}$ and $Y_{\beta}^{\alpha}\subseteq 2^{\w}$ so that:
\begin{enumerate}
    \item $X^{\alpha}_{\beta}\cap Y^{\alpha}_{\beta}=\vac$ for every $\alpha<\kappa$.
    \item $X^{\alpha}_{\mu}\subseteq X^{\alpha}_{\nu}$ and $Y^{\alpha}_{\mu}\subseteq Y^{\alpha}_{\nu}$ if $\mu < \nu < \c$.
      \item For  distinct
			       $\gamma, \delta <\kappa$ one of the following conditions holds:

      \begin{enumerate}
\item  There is $x\in Y^{\gamma}_{\beta+1}$ such that $f_\beta[A_x] \nrightarrow S_{\A_{X^{\delta}_{\beta+1}}}$.

\item  There is  an infinite antichain  $M$ such that $f_\beta[M] \nrightarrow S_{\A_{X^{\delta}_{\beta+1}}}$.

\item  There is $y\in Y^{\delta}_{\beta+1}$ such that $f_\beta^{-1}(A_y) \in \I(\A_{X^{\gamma}_{\beta+1}})$.

\item  There is  an infinite antichain  $M$ such that $f_\beta^{-1}(M) \in \I(\A_{X^{\gamma}_{\beta+1}})$.
\end{enumerate}

\item $|X^{\alpha}_{\beta}|, |Y^{\alpha}_{\beta}|\leq \kappa \cdot |\beta|$ for every $\alpha<\kappa$.
\end{enumerate}
Choose arbitrary distinct elements $x_0, y_0\in 2^{\w}$ and define $X_{0}^{\alpha}=\{x_0\}$ and $Y_{0}^{\alpha}=\{y_0\}$, for every $\alpha<\kappa$.
Assume that for $\beta<\c$ the sets
 $X_{\theta}^{\alpha}$ and $Y_{\theta}^{\alpha}$ have been defined for all $\theta < \beta$ and $\alpha<\kappa$ so that all of them satisfy the conditions (1), (2), (3) and (4). If $\beta<\c$ is a limit ordinal, then we define  $X_{\beta}^{\alpha}=\bigcup_{\theta<\beta} X_{\theta}^{\alpha}$ and
$Y_{\beta}^{\alpha}=\bigcup_{\theta<\beta} Y_{\theta}^{\alpha}$ for each $\alpha < \kappa$.
Now, suppose that $\beta = \theta +1$. We shall define $X_{\beta+1}^{\alpha}$ and $Y_{\beta+1}^{\alpha}$.  Notice from (4) that
$$
|2^{\w}\setminus \big[\big(\bigcup_{\alpha<\kappa} X_{\theta}^{\alpha}\big) \cup \big(\bigcup_{\alpha<\kappa}
Y_{\theta}^{\alpha}\big) \big]|= \c.
$$
Fix $\alpha < \kappa$. According to Lemma \ref{lemant}, for every $\gamma <  \kappa$ we can find finite nonempty  sets $B_{\gamma}^{\alpha}$, $C_{\gamma}^{\alpha}$, $D_{\gamma}^{\alpha}$ and $E_{\gamma}^{\alpha}$
so that the following conditions holds:
\begin{enumerate}
\item[(i)] $B_{\gamma}^{\alpha} \cup C_{\gamma}^{\alpha} \cup D_{\gamma}^{\alpha} \cup E_{\gamma}^{\alpha}\subseteq 2^{\w}\setminus \big[\big(\bigcup_{\alpha<\kappa} X_{\theta}^{\alpha}\big) \bigcup \big(\bigcup_{\alpha<\kappa}
Y_{\theta}^{\alpha}\big) \big].$

\item[(ii)] One of the following conditions hold
\begin{enumerate}
\item  There is $x\in Y^{\alpha}_{\theta}\cup D^{\alpha}_{\gamma}$ such that $f_\beta[A_x] \nrightarrow S_{\A_{X^{\gamma}_{\theta}\cup C^{\alpha}_{\gamma}}}$.

\item  There is  an infinite antichain  $M$ such that $f_\beta[M] \nrightarrow S_{\A_{X^{\gamma}_{\theta}\cup C^{\alpha}_{\gamma}}}$.

\item  There is $y\in Y^{\gamma}_{\theta}\cup E^{\alpha}_{\gamma}$ such that $f_\beta^{-1}(A_y) \in \I(\A_{X^{\alpha}_{\theta}\cup B^{\alpha}_{\gamma}})$.

\item  There is  an infinite antichain  $M$ such that $f_\beta^{-1}(M) \in \I(\A_{X^{\alpha}_{\theta}\cup B^{\alpha}_{\gamma}})$.
\end{enumerate}

\item[(iii)] $\big[(\bigcup_{\gamma\in \kappa\setminus \{\alpha\}}B_{\gamma}^{\alpha})\cap
\big[\bigcup_{\gamma\in \kappa\setminus \{\alpha\}}D_{\gamma}^{\alpha}\big]=\vac$

\item[(iv)] $\big[(\bigcup_{\gamma\in \kappa\setminus \{\alpha\}}C_{\gamma}^{\alpha})\cap
\big[\bigcup_{\gamma\in \kappa\setminus \{\alpha\}}E_{\gamma}^{\alpha}\big]=\vac$
\end{enumerate}
Define
$$
X_{\beta}^{\alpha}=X_{\theta}^{\alpha}\cup (\bigcup_{\gamma\in \kappa\setminus \{\alpha\}}B_{\gamma}^{\alpha})
\cup (\bigcup_{\gamma\in\kappa\setminus \{\alpha\}} C_{\alpha}^{\gamma})
$$ and
$$
Y_{\beta}^{\alpha}=Y_{\theta}^{\alpha}\cup (\bigcup_{\gamma\in \kappa\setminus \{\alpha\}}D_{\gamma}^{\alpha})
\cup (\bigcup_{\gamma\in\kappa\setminus \{\alpha\}} E_{\alpha}^{\gamma}).
$$
Conditions (1), (2), (3) and (4) are clearly satisfied. For every $\alpha<\kappa$ define $X_{\alpha}=\bigcup_{\beta<\c}X_{\beta}^{\alpha}$. Thus, by the construction and  Lemma
\ref{lemant} we have that, for every $\alpha, \gamma <\kappa$,
$$f_{\beta}[S_{\A_{X_{\alpha}}}]\neq S_{\A_{X_{\gamma}}}.$$
  Therefore $\{S_{X_{\alpha}}:\alpha<\kappa\}$ is an infinite $RK$-antichain of $FU$-filters which
	have character equal to $\c$.

\end{proof}

We end the paper with the following question that the authors could not solve it.

\begin{question}
Is there an $RK$-antichain of $FU$-filters of size  $\c$ ?
\end{question}

%%%%%%%%%%%%%%%%%%%%%%%%%%%%%%%%%%%%%%%%%%%%%%%%%%%%%%%%%%%%%%%%%%%%%%%%%%%%%%%%%%%%
\bibliographystyle{amsplain}

%%%%%%%%%%%%%%%%%%%%%%%%%%%%%%%%%%%%%%%%%%%%%%%%%%%%%%%%%%%%%%

\end{document}